\newtheorem*{theorem*}{Theorem}
\newtheorem{theorem}{Theorem}[section]
\newtheorem{lemma}[theorem]{Lemma}
\newtheorem*{proposition*}{Proposition}
\newtheorem{maintheorem}{Theorem}
\newtheorem{corollary}[theorem]{Corollary}
\newtheorem{definition}[theorem]{Definition}
\newtheorem{remark}[theorem]{Remark}
\newtheorem{example}[theorem]{Example}
\newtheorem{claim}[theorem]{Claim}
\renewcommand{\P}[1]{{\mathbb{P}}\left[{#1}\right]}
\newcommand{\CondP}[2]{{\mathbb{P}}\left[{#1}\middle\vert{#2}\right]}
\newcommand{\E}[1]{{\mathbb{E}}\left[{#1}\right]}
\newcommand{\ind}[1]{{\bf 1}\left({#1}\right)}
\newcommand{\eps}{\epsilon}
\newcommand{\N}{\mathbb N}
\newcommand{\cF}{{\mathcal{F}}}
\newcommand{\cG}{{\mathcal{G}}}
\newcommand{\cK}{{\mathcal{K}}}
\newcommand{\half}{{\textstyle \frac12}}
\newcommand{\argmax}{\operatornamewithlimits{argmax}}
\renewcommand{\phi}{\varphi}
\newcommand{\InfGraphs}{\mathcal{B}}
\newcommand{\GraphFam}{\mathcal{A}}
\newcommand{\belief}{X}
\newcommand{\limac}{A}
\newcommand{\llr}{Z} 
\newcommand{\psignal}{W} 
\newcommand{\action}{A}
\newcommand{\estL}{K}
\newcommand{\dtv}{d_{TV}}
\begin{document}
\title{Asymptotic Learning on Bayesian Social Networks\thanks{The
    authors would like to thank Shachar Kariv for an enthusiastic
    introduction to his work with Douglas Gale, and for suggesting the
    significance of asymptotic learning in this model.
    Elchanan Mossel is supported by NSF NSF award DMS 1106999, by ONR
    award N000141110140 and by ISF grant 1300/08. Allan Sly is
    supported in part by an Alfred Sloan Fellowship in
    Mathematics. Omer Tamuz is supported by ISF grant 1300/08, and is
    a recipient of the Google Europe Fellowship in Social Computing.
    This research is supported in part by this Google Fellowship.}}

\author{Elchanan Mossel \and Allan Sly \and Omer Tamuz}

\maketitle
\begin{abstract}
  Understanding information exchange and aggregation on networks is a
  central problem in theoretical economics, probability and
  statistics.  We study a standard model of economic agents on the
  nodes of a social network graph who learn a binary ``state of the
  world'' $S$, from initial signals, by repeatedly observing each
  other's best guesses.

  Asymptotic learning is said to occur on a family of graphs $G_n =
  (V_n,E_n)$ with $|V_n| \to \infty$ if with probability tending to
  $1$ as $n \to \infty$ all agents in $G_n$ eventually estimate $S$
  correctly.  We identify sufficient conditions for asymptotic
  learning and contruct examples where learning does not occur when
  the conditions do not hold.

\end{abstract}

\section{Introduction}
We consider a directed graph $G$ representing a social network. The
nodes of the graph are the set of agents $V$, and an edge from agent
$u$ to $w$ indicates that $u$ can observe the actions of $w$.  The
agents try to estimate a binary {\em state of the world} $S \in
\{0,1\}$, where each of the two possible states occurs with
probability one half.

The agents are initially provided with private signals which are
informative with respect to $S$ and i.i.d., conditioned on $S$:
There are two distributions, $\mu_0 \neq \mu_1$, such that conditioned
on $S$, the private signals are independent and distributed $\mu_S$.

In each time period $t \in \N$, each agent $v$ chooses an ``action''
$\action_v(t)$, which equals whichever of $\{0,1\}$ the state of the
world is more likely to equal, conditioned on the information
available to $v$ at time $t$. This information includes its private
signal, as well as the actions of its social network neighbors in the
previous periods.

A first natural question is whether the agents eventually reach
consensus, or whether it is possible that neighbors ``agree to
disagree'' and converge to different actions.  Assuming that the
agents do reach consensus regarding their estimate of $S$, a second
natural question is whether this consensus estimator is equal to $S$.
Certainly, since private signals are independent conditioned on $S$, a
large enough group of agents has, in the aggregation of their private
signals, enough information to learn $S$ with high
probability. However, it may be the case that this information is not
disseminated by the above described process. These and related
questions have been studied extensively in economics, statistics and
operations research; see Section~\ref{sec:literature}.

We say that the agents learn on a social network graph $G$ when all
their actions converge to the state of the world $S$.  For a sequence
of graphs $\{G_n\}_{n=1}^\infty$ such that $G_n$ has $n$ agents, we
say that {\em Asymptotic learning} occurs when the probability that
the agents learn on $G_n$ tends to one as $n$ tends to infinity, for a
fixed choice of private signal distributions $\mu_1$ and $\mu_0$.

An agent's initial {\em private belief} is the probability that $S=1$,
conditioned only on its private signal. When the distribution of
private beliefs is atomic, asymptotic learning does not necessarily
occur (see Example~\ref{example:non-learning}). This is also the case
when the social network graph is undirected (see
Example~\ref{example:directed}). Our main result
(Theorem~\ref{thm:non-atomic-learning}) is that asymptotic learning
occurs for non-atomic private beliefs and undirected graphs.

To prove this theorem we first prove that the condition of non-atomic
initial private beliefs implies that the agents all converge to the
same action, or all don't converge at all
(Theorem~\ref{thm:unbounded-common-knowledge}). We then show that for
any model in which this holds, asymptotic learning occurs
(Theorem~\ref{thm:bounded-learning}).  Note that it has been shown
that agents reach agreement under various other conditions (cf.
M{\'e}nager~\cite{menager2006consensus}). Hence, by
Theorem~\ref{thm:bounded-learning}, asymptotic learning also holds for
these models.

Our proof includes several novel insights into the dynamics of
interacting Bayesian agents.  Broadly, we show that on undirected
social network graphs connecting a {\em countably infinite} number of
agents, if all agents converge to the same action then they converge
to the correct action. This follows from the observation that if
agents in distant parts of a large graph converge to the same action
then they do so {\em almost} independently.  We then show that this
implies that for {\em finite} graphs of growing size the probability
of learning approaches one.

At its heart of this proof lies a topological lemma
(Lemma~\ref{lemma:local_property}) which may be of independent
interest; the topology here is one of rooted graphs (see, e.g.,
Benjamini and Schramm~\cite{benjamini2011recurrence}, Aldous and
Steele~\cite{aldous2003objective}). The fact that asymptotic learning
occurs for undirected graphs (as opposed to general strongly connected
graphs) is related to the fact that sets of bounded degree, {\it
  undirected} graphs are compact in this topology. In fact, our proof
applies equally to any such compact sets. For example, one can replace
{\it undirected} with {\it $L$-locally strongly connected}: a directed
graph $G=(V,E)$ is $L$-locally strongly connected if, for each $(u,w)
\in E$, there exists a path in $G$ of length at most $L$ from $w$ to
$u$. Asymptotic learning also takes place on $L$-locally strongly
connected graphs, for fixed $L$, since sets of $L$-locally strongly
connected, uniformly bounded degree graphs are compact. See
Section~\ref{sec:locally-connected} for further discussion.

\subsection{Related literature}
\label{sec:literature}
\subsubsection{Agreement}
There is a vast economic literature studying the question of
convergence to consensus in dynamic processes and games. A founding
work is Aumann's seminal Agreement Theorem~\cite{aumann1976agreeing},
which states that Bayesian agents who observe beliefs (i.e., posterior
probabilities, as opposed to actions in our model) cannot ``agree to
disagree''. Subsequent work (notably Geanakoplos and
Polemarchakis~\cite{geanakoplos1982we}, Parikh and
Krasucki~\cite{parikh1990communication}, McKelvey and
Page~\cite{mckelvey1986common}, Gale and Kariv~\cite{GaleKariv:03}
M{\'e}nager~\cite{menager2006consensus} and Rosenberg, Solan and
Vieille~\cite{rosenberg2009informational}) expanded the range of
models that display convergence to consensus. One is, in fact, left
with the impression that it takes a pathological model to feature
interacting Bayesian agents who do ``agree to disagree''.

M{\'e}nager~\cite{menager2006consensus} in particular describes a
model similar to ours and proves that consensus is achieved in a
social network setting under the condition that the probability space
is finite and ties cannot occur (i.e., posterior beliefs are always
different than one half). Note that our asymptotic learning result
applies for any model where consensus is guaranteed, and hence in
particular applies to models satisfying M{\'e}nager's conditions.

\subsubsection{Agents on social networks}
Gale and Kariv~\cite{GaleKariv:03} also consider Bayesian agents who
observe each other's actions. They introduce a model in which, as in
ours, agents receive a single initial private signal, and the action
space is discrete. However, there is no ``state of the world'' or
conditionally i.i.d.\ private signals. Instead, the relative merit of
each possible action depends on all the private signals.  Our model is
in fact a particular case of their model, where we restrict our
attention to the particular structure of the private signals described
above.

Gale and Kariv show (loosely speaking) that neighboring agents who
converge to two different actions must, at the limit, be indifferent
with respect to the choice between these two actions. Their result is
therefore also an agreement result, and makes no statement on the
optimality of the chosen actions, although they do profess interest in
the question of {\em ``... whether the common action chosen
  asymptotically is optimal, in the sense that the same action would
  be chosen if all the signals were public information... there is no
  reason why this should be the case.''} This is precisely the
question we address.

A different line of work is the one explored by Ellison and
Fudenberg~\cite{EllFud:95}. They study agents on a social network that
use rules of thumb rather than full Bayesian updates.  A similar
approach is taken by Bala and Goyal~\cite{BalaGoyal:96}, who also
study agents acting iteratively on a social network. They too are
interested in asymptotic learning (or ``complete learning'', in their
terms). They consider a model of bounded rationality which is not
completely Bayesian. One of their main reasons for doing so is the
mathematical complexity of the fully Bayesian model, or as they state,
{\em ``to keep the model mathematically tractable... this possibility
  [fully Bayesian agents] is precluded in our model... simplifying the
  belief revision process considerably.''}  In this simpler,
non-Bayesian model, Bala and Goyal show both behaviors of asymptotic
learning and results of non-learning, depending on various parameters
of their model.

\subsubsection{Herd behavior}
The ``herd behavior'' literature (cf. Banerjee~\cite{Banerjee:92},
Bikhchandani, Hirshleifer and Welch~\cite{BichHirshWelch:92}, Smith
and S{\o}rensen~\cite{smith2000pathological}) consider related but
fundamentally simpler models. As in our model there is a ``state of
the world'' and conditionally independent private signals. A countably
infinite group of agents is exogenously ordered, and each picks an
action sequentially, after observing the actions of its predecessors
or some of its predecessors. Agents here act only {\em once}, as
opposed to our model in which they act {\em repeatedly}.

The main result for these models is that in some situations there may
arise an ``information cascade'', where, with positive probability,
almost all the agents take the wrong action. This is precisely the
opposite of {\em asymptotic learning}. The condition for information
cascades is ``bounded private beliefs''; herd behavior occurs when the
agents' beliefs, as inspired by their private signals, are bounded
away both from zero and from one~\cite{smith2000pathological}. In
contrast, we show that in our model asymptotic learning occurs even
for bounded beliefs.

In the herd behavior models information only flows in one direction:
If agent $u$ learns from $w$ then $w$ does not learn from $u$. This
significant difference, among others, makes the tools used for their
analysis irrelevant for our purposes.

\section{Formal definitions, results and examples}
\label{sec:formal}

\subsection{Main definitions}
The following definition of the agents, the state of the world and the
private signals is adapted from~\cite{MosselTamuz10B:arxiv}, where a
similar model is discussed.
\begin{definition}
  \label{def:state-signals}
  Let $(\Omega, \mathcal{O})$ be a $\sigma$-algebra. Let $\mu_0$ and
  $\mu_1$ be different and mutually absolutely continuous probability
  measures on $(\Omega, \mathcal{O})$.

  Let $\delta_0$ and $\delta_1$ be the distributions on $\{0,1\}$ such
  that $\delta_0(0)=\delta_1(1)=1$.

  Let $V$ be a countable (finite or infinite) set of agents, and let
  \begin{align*}
   \mathbb{P} = \half\delta_0\mu_0^V+\half\delta_1\mu_1^V,
  \end{align*}
  be a distribution over $\{0,1\}\times\Omega^V$.  We denote by $S
  \in\{0,1\}$ the {\bf state of the world} and by $\psignal_u$ the
  {\bf private signal} of agent $u \in V$. Let
  \begin{align*}
    (S,\psignal_{u_1},\psignal_{u_2},\ldots) \sim \mathbb{P}.
  \end{align*}
\end{definition}
Note that the private signals $\psignal_u$ are i.i.d., conditioned on
$S$: if $S=0$ - which happens with probability half - the private
signals are distributed i.i.d.\ $\mu_0$, and if $S=1$ then they are
distributed i.i.d.\ $\mu_1$.

We now define the dynamics of the model.
\begin{definition}
  \label{def:revealed-actions}
  Consider a set of agents $V$, a state of the world $S$ and private
  signals $\{\psignal_u: u\in V\}$ such that
  \begin{align*}
    (S,\psignal_{u_1},\psignal_{u_2},\ldots) \sim \mathbb{P},
  \end{align*}
  as defined in Definition~\ref{def:state-signals}.

  Let $G=(V,E)$ be a directed graph which we shall call the {\bf
    social network}. We assume throughout that $G$ is simple (i.e., no
  parallel edges or loops) and strongly connected. Let the set of
  neighbors of $u$ be $N(u) = \{v :\: (u,v) \in E\}$. The {\bf
    out-degree} of $u$ is equal to $|N(u)|$.

  For each time period $t \in \{1, 2, \ldots\}$ and agent $u \in V$,
  denote the {\bf action} of agent $u$ at time $t$ by $\action_u(t)$,
  and denote by $\cF_u(t)$ the information available to agent $u$ at
  time $t$. They are jointly defined by
  \begin{align*}
    \cF_u(t) = \sigma(\psignal_u,\{\action_v(t'):\:v \in N(u), t' < t\}),
  \end{align*}
  and
  \begin{align*}
    \action_u(t) =
    \begin{cases}
      0 & \CondP{S=1}{\cF_u(t)} < 1/2 \\
      1 & \CondP{S=1}{\cF_u(t)} > 1/2 \\
      \in \{0,1\} & \CondP{S=1}{\cF_u(t)} = 1/2.
    \end{cases}
  \end{align*}
  Let $\belief_u(t) = \CondP{S=1}{\cF_u(t)}$ be agent $u$'s {\bf
    belief} at time $t$.
\end{definition}
Informally stated, $\action_u(t)$ is agent $u$'s best estimate of $S$
given the information $\cF_u(t)$ available to it up to time $t$. The
information available to it is its private signal $\psignal_u$ and the
actions of its neighbors in $G$ in the previous time periods.

\begin{remark}
  \label{rem:map}
  An alternative and equivalent definition of $\action_u(t)$ is the
  MAP estimator of $S$, as calculated by agent $u$ at time $t$:
  \begin{align*}
    \action_u(t)=\argmax_{s \in \{0,1\}}\CondP{S=s}{\cF(t)}
    =\argmax_{A \in \cF(t)}\P{A=S},
  \end{align*}
  with some tie-breaking rule.
\end{remark}

Note that we assume nothing about how agents break ties, i.e., how
they choose their action when, conditioned on their available
information, there is equal probability for $S$ to equal either 0 or 1.

Note also that the belief of agent $u$ at time $t=1$, $\belief_u(1)$,
depends only on $\psignal_u$:
\begin{align*}
  \belief_u(1) = \CondP{S=1}{\psignal_u}.
\end{align*}
We call $\belief_u(1)$ the {\em initial belief} of agent $u$.
\begin{definition}
  Let $\mu_0$ and $\mu_1$ be such that $X_u(1)$, the initial belief of
  $u$, has a non-atomic distribution ($\Leftrightarrow$ the
  distributions of the initial beliefs of all agents are
  non-atomic). Then we say that the pair $(\mu_0,\mu_1)$ {\bf induce
    non-atomic beliefs}.
\end{definition}

We next define some limiting random variables: $\cF_u$ is the limiting
information available to $u$, and $\belief_u$ is its limiting belief.
\begin{definition}
  Denote $\cF_u = \cup_t\cF_u(t)$, and let
  \begin{align*}
    \belief_u = \CondP{S=1}{\cF_u}.
  \end{align*}
\end{definition}
Note that the limit $\lim_{t \to \infty}\belief_u(t)$ almost surely
exists and equals $\belief_u$, since $\belief_u(t)$ is a bounded
martingale.

We would like to define the limiting {\em action} of agent
$u$. However, it might be the case that agent $u$ takes both actions
infinitely often, or that otherwise, at the limit, both actions are
equally desirable. We therefore define $\limac_u$ to be the limiting
{\em optimal action set}. It can take the values $\{0\}$, $\{1\}$ or
$\{0,1\}$.
\begin{definition}
  Let $\limac_u$, the {\bf optimal action set} of agent $u$, be defined by
  \begin{align*}
    \limac_u =
    \begin{cases}
      \{0\} & \belief_u < 1/2 \\
      \{1\} & \belief_u > 1/2 \\
      \{0,1\} & \belief_u = 1/2.
    \end{cases}
  \end{align*}
\end{definition}
Note that if $a$ is an action that $u$ takes infinitely often then $a
\in \limac_u$, but that if $0$ (say) is the only action that $u$ takes
infinitely often then it still may be the case that
$\limac_u=\{0,1\}$. However, we show below that when $(\mu_0,\mu_1)$
induce non-atomic beliefs then $\limac_u$ is almost surely equal to
the set of actions that $u$ takes infinitely often.

\subsection{Main results}
In our first theorem we show that when initial private beliefs are
non-atomic, then at the limit $t \to \infty$ the optimal action sets
of the players are identical. As Example~\ref{example:non-learning}
indicates, this may not hold when private beliefs are atomic.

\begin{maintheorem}
  \label{thm:unbounded-common-knowledge}
  Let $(\mu_0,\mu_1)$ induce non-atomic beliefs. Then there exists
  a random variable $\limac$ such that almost surely $\limac_u=\limac$
  for all $u$.
\end{maintheorem}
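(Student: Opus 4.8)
The plan is to reduce the global statement to a statement about adjacent agents, and then to combine the optimality of each agent's action with strong connectivity. For any edge $u \to v$ (so that $u$ observes $v$), the action $\action_v(t-1)$ is $\cF_u(t)$-measurable, hence copying it is a strategy available to $u$ at time $t$. Since $\action_u(t)$ is the MAP estimate given $\cF_u(t)$, optimality gives $\CondP{\action_u(t)=S}{\cF_u(t)} \ge \CondP{\action_v(t-1)=S}{\cF_u(t)}$, and taking expectations $\P{\action_u(t)=S} \ge \P{\action_v(t-1)=S}$. Writing $M_u(t)=\max(\belief_u(t),1-\belief_u(t))$ and noting $\P{\action_v(t-1)=S}=\E{M_v(t-1)}$ (because $v$ plays its own MAP), this reads $p_u \ge p_v$ for the limiting accuracies $p_u:=\lim_t \E{M_u(t)}$, which exist since $\belief_u(t)$ is a convergent bounded martingale. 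For any two vertices strong connectivity supplies directed paths both ways, so chaining these inequalities forces $p_u = p_w$ for all $u,w$; in particular equality holds across every edge.

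Next I would turn this equality into pointwise agreement. Fix an edge $u\to v$. Since $p_u=p_v$, the nonnegative gap $M_u(t)-\CondP{\action_v(t-1)=S}{\cF_u(t)}$ has expectation tending to $0$, hence tends to $0$ in $L^1$ and a.s.\ along a subsequence. Passing to the limit via Lévy's upward convergence theorem — using $\belief_u(t)\to\belief_u$ and that, whenever $v$ is not indifferent, $\action_v(t)$ stabilizes to the value $a_v\in\{0,1\}$ it eventually settles on (an $\cF_u$-measurable quantity, as $u$ observes $v$) — yields $\max(\belief_u,1-\belief_u) = \CondP{a_v=S}{\cF_u}$ almost surely. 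If $\limac_v=\{1\}$, so $a_v=1$, this says $\max(\belief_u,1-\belief_u)=\CondP{S=1}{\cF_u}=\belief_u$, forcing $\belief_u\ge 1/2$; symmetrically $\limac_v=\{0\}$ forces $\belief_u\le 1/2$. A direct strict disagreement is then impossible: if $\limac_u=\{0\}$ (so $\belief_u<1/2$, $M_u=1-\belief_u$) while $\limac_v=\{1\}$, the identity gives $1-\belief_u=\belief_u$, contradicting $\belief_u<1/2$ (and the reverse mismatch is excluded identically).

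To globalize I would remove the remaining degenerate case, that of an indifferent limiting belief $\belief_u=1/2$ (i.e.\ $\limac_u=\{0,1\}$) sitting beside a decided neighbor; the identity above is consistent with this, so optimality alone cannot rule it out. This is exactly where non-atomicity enters: I would prove that non-atomic initial beliefs propagate through the Bayesian updates, so that the limiting belief $\belief_u$ is itself non-atomic and in particular $\P{\belief_u=1/2}=0$. Granting this, every $\limac_u\in\{\{0\},\{1\}\}$ almost surely, each agent carries a definite vote $\sgn(\belief_u-1/2)$, and by the previous paragraph no directed edge joins a $0$-vote to a $1$-vote in either direction. Strong connectivity then finishes the argument: were both votes present, some directed path would have to cross from the $0$-region to the $1$-region across a single edge, which is precisely what is forbidden. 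Hence all votes coincide, giving a single random variable $\limac$ with $\limac_u=\limac$ for every $u$.

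I expect the propagation lemma to be the main obstacle. The difficulty is that, because $G$ is strongly connected, for large $t$ the actions $u$ observes depend on $\psignal_u$ itself (information emitted by $u$ returns around cycles), so the increment $\logit\belief_u(t)-\logit\belief_u(1)$ is not independent of the non-atomic private-signal term $\logit\belief_u(1)$. I would handle this by conditioning on $S$ and on all private signals other than $\psignal_u$, under which the whole (tie-free) dynamics becomes a deterministic function of $\psignal_u$ alone: every observed action is then a threshold function of $\psignal_u$, changing only on a measure-zero set, while the direct dependence of $\belief_u$ on the continuously distributed $\logit\belief_u(1)$ prevents any atom at $1/2$. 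Making this rigorous — in particular verifying that ties $\belief_u(t)=1/2$ never occur at finite times, so that the dynamics is well defined and measurable and the limiting beliefs inherit non-atomicity — is the principal technical work.
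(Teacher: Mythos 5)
Your first two paragraphs are essentially sound and run parallel to material the paper actually uses: the monotonicity and equality of the limiting accuracies $p_u$ is Claim~\ref{clm:pG}, and your limit identity $\max(\belief_u,1-\belief_u)=\CondP{a_v=S}{\cF_u}$ amounts to a self-contained derivation of the Gale--Kariv imitation principle that the paper imports as Theorem~\ref{thm:imitation}. The gap is in your third and fourth paragraphs: the entire reduction rests on the claim that the \emph{limiting} belief satisfies $\P{\belief_u=1/2}=0$, and this is neither proved by your sketch nor what the paper proves --- the paper explicitly allows the event $\{\belief_u=1/2\}$ to have positive probability (it is case 3 of Corollary~\ref{cor:converge}). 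What non-atomicity actually buys, via Theorem~\ref{thm:mu-half-indecisive}, is the weaker statement that almost surely $\belief_u=1/2$ \emph{if and only if} $u$ takes both actions infinitely often. Combined with Corollary~\ref{cor:imitation} this lets indifference and oscillation propagate across edges and, by strong connectivity, to the whole graph; on the event $\{\belief_u=1/2\}$ every agent then has $\limac_v=\{0,1\}$, so agreement holds there without any atom being excluded.

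The reason your propagation lemma does not follow from the sketched argument is the following counting problem. After conditioning on $S$ and $\psignal_{-u}$ one has $\llr_u=\llr_u(1)+Y_u$, where $Y_u$ is a function of $\psignal_{-u}$ and of $u$'s entire action sequence $\bar{\action}_u$ (Claims~\ref{clm:llr-decomp} and~\ref{clm:y-measurable}). The event $\llr_u=0$ is the event that the non-atomic, conditionally independent variable $\llr_u(1)$ hits $-Y_u(\psignal_{-u},\bar{\action}_u)$; but $\bar{\action}_u$ is itself a function of $\psignal_u$, so the target $-Y_u$ ranges over as many values as there are realizable action sequences --- an a priori uncountable set --- and the union bound fails. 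The paper's proof of Theorem~\ref{thm:mu-half-indecisive} works precisely because it only needs to kill the atom on the event that $\bar{\action}_u$ \emph{converges}: there are only countably many eventually-constant $\{0,1\}$-sequences $\bar a$, so summing $\P{\bar{\action}_u=\bar a,\ \llr_u(1)=-Y_u(\psignal_{-u},\bar a)}=0$ over them is legitimate. To repair your route you would either have to establish genuine non-atomicity of $\belief_u$ (not known, and not needed), or adopt the paper's structure: prove the ``$\belief_u=1/2$ implies oscillation'' equivalence and use it, together with your paragraph-two identity, to show that an oscillating neighbor forces each observer to be indifferent and itself oscillating, whence all agents share the optimal action set $\{0,1\}$ on that event.
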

I.e., when initial private beliefs are non-atomic then agents, at the
limit, agree on the optimal action. The following theorem states that
when such agreement is guaranteed then the agents learn the state of
the world with high probability, when the number of agents is
large. This phenomenon is known as {\em asymptotic learning}. This
theorem is our main result.
\begin{maintheorem}
  \label{thm:bounded-learning}
  Let $\mu_0,\mu_1$ be such that for every connected, undirected graph
  $G$ there exists a random variable $\limac$ such that almost surely
  $\limac_u=\limac$ for all $u \in V$.  Then there exists a sequence
  $q(n)=q(n, \mu_0, \mu_1)$ such that $q(n) \to 1$ as $n \to \infty$,
  and $\P{\limac = \{S\}} \geq q(n)$, for any choice of undirected,
  connected graph $G$ with $n$ agents.
\end{maintheorem}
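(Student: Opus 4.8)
I would prove Theorem~\ref{thm:bounded-learning} in two stages: first establish \emph{perfect} learning on every infinite connected undirected graph, and then transfer this to a uniform statement for finite graphs via a compactness argument in the topology of rooted graphs. Note first that every finite connected graph is a connected undirected graph, so the hypothesis supplies a common limiting action set $\limac$ on each finite $G$, and learning on $G$ is exactly the event $\{\limac=\{S\}\}$. Since there are finitely many connected undirected graphs on $n$ vertices, it suffices to show that $q(n):=\min_G\P{\limac=\{S\}}$ over all such $n$-vertex graphs tends to $1$.

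For the infinite case, the plan is to prove that on any infinite connected undirected $H$ (for which the hypothesis gives a common $\limac$) one has $\P{\limac=\{S\}}=1$. The first observation is that $\limac$ is $\cF_u$-measurable for every $u$, since $\limac=\limac_u$ is determined by $\belief_u=\CondP{S=1}{\cF_u}$; thus the common value is ``known'' to each agent. Next I would record a uniform correlation bound: because $\cF_u\supseteq\sigma(\psignal_u)$, Jensen gives $\E{\belief_u^2}\ge\E{\belief_u(1)^2}=:c_0$, and since $\mu_0\neq\mu_1$ the initial belief is nondegenerate, so $c_0>1/4$. The identities $\CondE{\belief_u}{S=1}=2\E{\belief_u^2}$ and $\CondE{\belief_u}{S=0}=1-2\E{\belief_u^2}$ (both from the tower rule) then yield $\CondE{\belief_u}{S=1}\ge\tfrac12+\gamma$ and $\CondE{\belief_u}{S=0}\le\tfrac12-\gamma$ with $\gamma=2c_0-\tfrac12>0$, uniformly in $u$. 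Since $\belief_u\ge0$, Markov's inequality gives $\CondP{\belief_u>1/2}{S=0}\le 2\,\CondE{\belief_u}{S=0}\le 1-2\gamma$, and symmetrically $\CondP{\belief_u<1/2}{S=1}\le 1-2\gamma$.

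Now choose agents $u_1,u_2,\ldots$ that are pairwise far apart in $H$ (possible since $H$ is infinite). On the event $\{\limac=\{1\}\}$ every $\belief_{u_i}>1/2$, on $\{\limac=\{0\}\}$ every $\belief_{u_i}<1/2$, and on the tie event $\{\limac=\{0,1\}\}$ every $\belief_{u_i}=1/2$. The heart of the argument is that, conditioned on $S$, the limiting beliefs $\belief_{u_1},\ldots,\belief_{u_k}$ become asymptotically independent as the pairwise distances tend to infinity. Granting this decoupling, the probability of wrong agreement given $S=0$ is at most $\prod_{i\le k}\CondP{\belief_{u_i}>1/2}{S=0}\le(1-2\gamma)^k\to0$; the same applies to wrong agreement given $S=1$ and to the tie event. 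Hence $\limac=\{S\}$ almost surely. I expect this decoupling to be the main obstacle: although distant agents share essentially no private signals, information can in principle propagate across all of $H$ over infinitely many rounds, so one must quantify that the influence of far-away regions on an agent's limiting belief is negligible --- for instance by approximating $\belief_{u_i}$ in $L^2$ by a function of a bounded neighborhood and controlling the error through the belief martingale.

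Finally I would transfer perfect learning on infinite graphs to asymptotic learning on finite graphs using the topological Lemma~\ref{lemma:local_property}, arguing by contradiction. If $q(n)\not\to1$, there are $\epsilon>0$ and connected undirected graphs $G_n$ with $|V_n|\to\infty$ and $\P{\limac=\{S\}}\le1-\epsilon$; since all agents agree, for a suitable root $v_n$ the rooted graph $(G_n,v_n)$ has its root failing to learn with probability at least $\epsilon$. Passing to a subsequential local limit yields an infinite connected rooted graph $(H,\rho)$ on which $\rho$ fails to learn with probability at least $\epsilon$, contradicting the infinite-graph result; the continuity of the learning probability along local limits is exactly what Lemma~\ref{lemma:local_property} should provide. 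The one delicate point here is that unbounded degree breaks naive compactness of rooted graphs; I would handle it by observing that a high-degree agent sees many conditionally independent neighbors and therefore learns readily, so that failure of learning is confined to the sparse, locally finite parts of $G_n$ to which the compactness argument applies.
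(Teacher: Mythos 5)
Your overall architecture --- perfect learning on infinite graphs first, then a compactness/local-limit transfer to finite graphs, with unbounded degrees handled separately because a high-degree agent learns from its neighbours' first actions --- is the same as the paper's, and your uniform second-moment bound giving $\CondP{\belief_u>1/2}{S=0}\le 1-2\gamma$ (and likewise for the other wrong-agreement and tie events) is correct and plays the role of the paper's Claim~\ref{clm:pGreaterThanHalf}. One misattribution at the end is minor: continuity of the learning probability along local limits comes from coupling isomorphic balls (Lemma~\ref{lemma:p-iso} and Lemma~\ref{lemma:limit-leq-p}), not from Lemma~\ref{lemma:local_property}.

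The genuine gap is the decoupling step, which you explicitly defer, and it is not a technicality --- it is the entire content of the theorem. Your proposed route (approximate $\belief_{u_i}$ in $L^2$ by $\belief_{u_i}(t_i)$, a function of $\psignal(B_{t_i}(G,u_i))$, and take the $u_i$ pairwise far apart) founders on a uniformity problem: the time $t_i$ needed for $\belief_{u_i}(t_i)$ to approximate $\belief_{u_i}$ depends on $u_i$ and is not bounded over the graph, while ``far apart'' must mean $d(u_i,u_j)>t_i+t_j$; since $t_j$ is only determined after $u_j$ is chosen, you cannot choose the vertices first and the radii second. This is exactly the difficulty that Lemma~\ref{lemma:local_property} is built to overcome: it upgrades ``every graph in a compact family has one vertex with a given local property'' to ``each graph has infinitely many distinct vertices with that property at one uniform radius $r_0$'', and the compactness used there is of the whole family $\InfGraphs_d$, not of a single graph --- so the uniform statement you need cannot be extracted by working on the fixed graph $H$ alone. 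The paper also decouples a different pair of objects than you propose: rather than proving joint near-independence of the limit beliefs $\belief_{u_1},\dots,\belief_{u_k}$ (delicate, since by hypothesis these all lie a.s.\ on the same side of $1/2$), it shows that the single consensus variable $\limac$ is $\delta$-independent, given $S$, of the private signals in a far-away ball, because $\limac$ is locally estimable near a fixed $u_0$ via $\estL_{u_0}(r)\to\limac$ (Lemma~\ref{lemma:r-independent}); it then bootstraps, by induction on $k$, a single vertex holding $k$ almost-independent estimators each of quality close to $p^*=\inf_{G\in\InfGraphs}p(G)$, and contradicts the minimality of $p^*$ by taking a majority of three of them. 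Without either this mechanism or an actual proof of your asymptotic-independence claim, the infinite-graph step --- and hence the theorem --- remains unproved.
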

Informally, when agents agree on optimal action sets then they
necessarily learn the correct state of the world, with probability
that approaches one as the number of agents grows. This holds
uniformly over all possible connected and undirected social network
graphs.

The following theorem is a direct consequence of the two theorems
above, since the property proved by
Theorem~\ref{thm:unbounded-common-knowledge} is the condition required
by Theorem~\ref{thm:bounded-learning}.
\begin{maintheorem}
  \label{thm:non-atomic-learning}
  Let $\mu_0$ and $\mu_1$ induce non-atomic beliefs. Then there exists
  a sequence $q(n)=q(n, \mu_0, \mu_1)$ such that $q(n) \to 1$ as $n
  \to \infty$, and $\P{\limac_u = \{S\}} \geq q(n)$, for all agents
  $u$ and for any choice of undirected, connected $G$ with $n$ agents.
\end{maintheorem}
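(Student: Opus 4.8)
The plan is to chain the two preceding theorems, since the hypothesis of Theorem~\ref{thm:non-atomic-learning} feeds directly into Theorem~\ref{thm:unbounded-common-knowledge}, whose conclusion in turn supplies exactly the hypothesis required by Theorem~\ref{thm:bounded-learning}. In other words, this statement is a pure composition, and the real content lives entirely in the two constituent results that I am permitted to assume.

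First I would observe that the assumption that $(\mu_0,\mu_1)$ induce non-atomic beliefs is verbatim the hypothesis of Theorem~\ref{thm:unbounded-common-knowledge}, and that this theorem imposes no restriction on the social network beyond those standing in Definition~\ref{def:revealed-actions}. In particular it applies to \emph{every} connected, undirected graph $G$. Hence for each such $G$ there is a random variable $\limac$ with $\limac_u=\limac$ almost surely for all $u \in V$. This is precisely the condition placed on $(\mu_0,\mu_1)$ in the statement of Theorem~\ref{thm:bounded-learning}.

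Next I would invoke Theorem~\ref{thm:bounded-learning} with these same $\mu_0,\mu_1$. It produces a sequence $q(n)=q(n,\mu_0,\mu_1)$ with $q(n)\to 1$ such that $\P{\limac=\{S\}}\ge q(n)$ for every undirected, connected $G$ on $n$ agents, where $\limac$ is the common optimal action set furnished in the previous step.

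The only point requiring a word of care is the passage from the common action set $\limac$ to the individual action sets $\limac_u$ that appear in the conclusion. But by the first step, for each agent $u$ the events $\{\limac_u=\{S\}\}$ and $\{\limac=\{S\}\}$ coincide up to a null set, so $\P{\limac_u=\{S\}}=\P{\limac=\{S\}}\ge q(n)$ for all $u$, which is the desired bound. I do not expect a genuine obstacle here: the substance is already carried by Theorems~\ref{thm:unbounded-common-knowledge} and~\ref{thm:bounded-learning}, and the only thing worth double-checking is that the graph class (connected, undirected) and the notion of non-atomic beliefs match exactly across the two hypotheses, which they do.
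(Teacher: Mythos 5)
Your proposal is correct and matches the paper exactly: the paper also presents Theorem~\ref{thm:non-atomic-learning} as an immediate composition of Theorems~\ref{thm:unbounded-common-knowledge} and~\ref{thm:bounded-learning}, noting that the conclusion of the former is verbatim the hypothesis of the latter. Your extra remark that $\limac_u=\limac$ almost surely justifies replacing $\P{\limac=\{S\}}$ by $\P{\limac_u=\{S\}}$ is a correct (if routine) point the paper leaves implicit.
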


\subsection{Note on directed vs. undirected graphs}

\begin{figure}[h]
  \centering
  \includegraphics[scale=1]{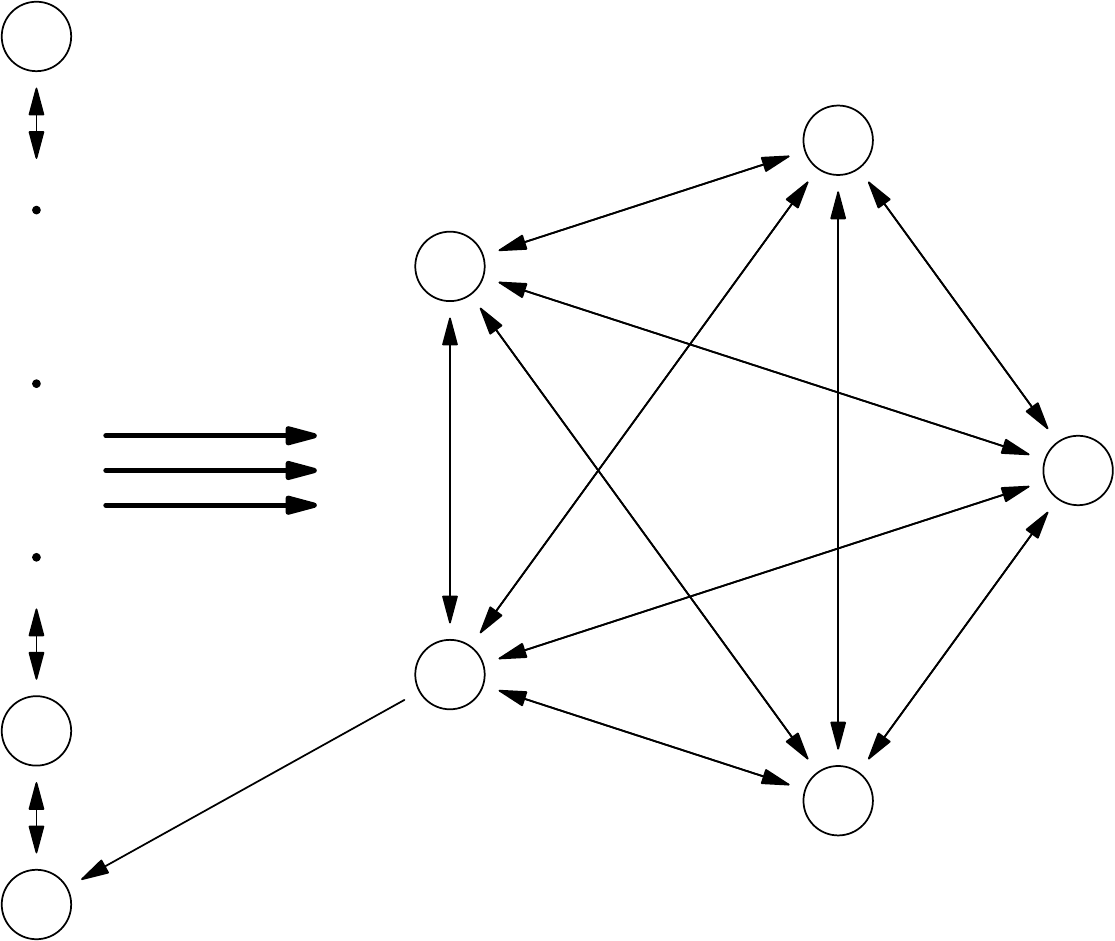}
  \caption{\label{fig:royal-family} The five members of the royal
    family (on the right) all observe each other. The rest of the
    agents - the public - all observe the royal family (as
    suggested by the three thick arrows in the middle) and their
    immediate neighbors. Finally, one of the royals observes one
    of the public, so that the graph is strongly connected.  This is
    an example of how asymptotic learning does not necessarily occur
    when the graph is undirected.}
\end{figure}

Note that we require that the graph $G$ not only be strongly
connected, but also undirected (so that if $(u,v) \in E$ then $(v,u) \in
E$.)  The following example (depicted in Figure~\ref{fig:royal-family})
shows that when private beliefs are bounded then asymptotic learning
may not occur when the graph is strongly connected but not
undirected\footnote{We draw on Bala and Goyal's~\cite{BalaGoyal:96}
  {\em royal family} graph.}.

\begin{example}
  \label{example:directed}
  Consider the the following graph. The vertex set is comprised of two
  groups of agents: a ``royal family'' clique of 5 agents who all
  observe each other, and $5-n$ agents - the ``public'' - who are
  connected in a chain, and in addition can all observe all the agents
  in the royal family. Finally, a single member of the royal family
  observes one of the public, so that the graph is strongly connected.
\end{example}

Now, with positive probability, which is independent of $n$, there
occurs the event that all the members of the royal family initially
take the wrong action. Assuming the private signals are sufficiently
weak, then it is clear that all the agents of the public will adopt
the wrong opinion of the royal family and will henceforth choose the
wrong action.

Note that the removal of one edge - the one from the royal back to the
commoners - results in this graph no longer being strongly
connected. However, the information added by this edge rarely has an
affect on the final outcome of the process. This indicates that strong
connectedness is too weak a notion of connectedness in this
context. We therefore in seek stronger notions such as connectedness
in undirected graphs.

A weaker notion of connectedness is that of $L$-locally strongly
connected graphs, which we defined above. For any $L$, the graph from Example~\ref{example:directed} is not $L$-locally strongly
connected for $n$ large enough.

\section{Proofs}
Before delving into the proofs of
Theorems~\ref{thm:unbounded-common-knowledge}
and~\ref{thm:bounded-learning} we introduce additional definitions in
subsection~\ref{sec:more-defs} and prove some general lemmas in
subsections~\ref{sec:graph-limits}, \ref{sec:isomorphics-balls}
and~\ref{sec:delta-ind}.  Note that Lemma~\ref{lemma:local_property},
which is the main technical insight in the proof of
Theorem~\ref{thm:bounded-learning}, may be of independent interest. We
prove Theorem~\ref{thm:bounded-learning} in
subsection~\ref{sec:learning} and
Theorem~\ref{thm:unbounded-common-knowledge} in
subsection~\ref{sec:convergence}.

\subsection{Additional general notation}
\label{sec:more-defs}

\begin{definition}
  \label{def:llr}
  We denote the {\bf log-likelihood ratio} of agent $u$'s belief at
  time $t$ by
  \begin{align*}
    \llr_u(t)  =\log\frac{\belief_u(t)}{1-\belief_u(t)},
  \end{align*}
  and let
  \begin{align*}
    \llr_u = \lim_{t\to\infty}\llr_u(t).
  \end{align*}
\end{definition}
Note that
\begin{align*}
  \llr_u(t)=   \log\frac{\CondP{S=1}{\cF_u(t)}}{\CondP{S=0}{\cF_u(t)}}.
\end{align*}
and that
\begin{align*}
  \llr_u(1) = \log\frac{d\mu_1}{d\mu_0}(W_u).
\end{align*}
Note also that $\llr_u(t)$ converges almost surely since
$\belief_u(t)$ does.

\begin{definition}
  \label{def:agg-actions}
  We denote the set of actions of agent $u$ up to time $t$ by
  \begin{align*}
    \bar{\action}_u(t) = (\action_u(1),\ldots,\action_u(t-1)).
  \end{align*}
  The set of all actions of $u$ is similarly denoted by
  \begin{align*}
    \bar{\action}_u = (\action_u(1),\action_u(2),\ldots).
  \end{align*}
  We denote the actions of the neighbors of $u$ up to time $t$ by
  \begin{align*}
    I_u(t) = \{\bar{\action}_w(t):\: w \in N(u)\} = \{\action_w(t'):\:w
    \in N(u),t'<t\},
  \end{align*}
  and let $I_u$ denote all the actions of $u$'s neighbors:
  \begin{align*}
    I_u = \{\bar{\action}_w:\: w \in N(u)\} = \{\action_w(t'):\:w
    \in N(u),t' \geq 1\}.
  \end{align*}
\end{definition}
Note that using this notation we have that $\cF_u(t) =
\sigma(\psignal_u,I_u(t))$ and $\cF_u = \sigma(\psignal_u,I_u)$.

\begin{definition}
  \label{def:p-u}
  We denote the probability that $u$ chooses the correct action at time
  $t$ by
  \begin{align*}
    p_u(t)=\P{\action_u(t)=S}.
  \end{align*}
  and accordingly
  \begin{align*}
    p_u=\lim_{t \to \infty}p_u(t).
  \end{align*}
\end{definition}

\begin{definition}
  \label{def:group-signal}
  For a set of vertices $U$ we denote by $\psignal(U)$ the private
  signals of the agents in $U$.
\end{definition}

\subsection{Sequences of rooted graphs and their limits}
\label{sec:graph-limits}
In this section we define a topology on {\em rooted graphs}. We call
convergence in this topology {\em convergence to local limits}, and
use it repeatedly in the proof of
Theorem~\ref{thm:bounded-learning}. The core of the proof of
Theorem~\ref{thm:bounded-learning} is the topological
Lemma~\ref{lemma:local_property}, which we prove here. This lemma is a
claim related to {\em local graph properties}, which we also introduce
here.

\begin{definition}
  Let $G=(V,E)$ be a finite or countably infinite graph, and let $u
  \in V$ be a vertex in $G$. We denote by $(G,u)$ the {\bf rooted
    graph} $G$ with root $u$.
\end{definition}

\begin{definition}
  Let $G=(V,E)$ and $G'=(V',E')$ be graphs. $h : V \to V'$ is a {\bf
    graph isomorphism} between $G$ and $G'$ if $(u,v) \in E
  \Leftrightarrow (h(u), h(v)) \in E'$.

  Let $(G,u)$ and $(G',u')$ be rooted graphs. Then $h : V \to V'$ is a
  {\bf rooted graph isomorphism} between $(G,u)$ and $(G',u')$ if $h$
  is a graph isomorphism and $h(u) = u'$.

  We write $(G,u) \cong (G',u')$ whenever there exists a rooted graph
  isomorphism between the two rooted graphs.
\end{definition}

Given a (perhaps directed) graph $G=(V,E)$ and two vertices $u, w \in
V$, the graph distance $d(u,w)$ is equal to the length in edges of a
shortest (directed) path between $u$ and $w$.
\begin{definition}
  \label{def:balls}
  We denote by $B_r(G, u)$ the ball of radius $r$ around the vertex
  $u$ in the graph $G=(V,E)$: Let $V'$ be the set of vertices $w$ such
  that $d(u,w)$ is at most $r$. Let $E' = \{(u,w)\in E:\: u,w \in
  V'\}$. Then $B_r(G, u)$ is the rooted graph with vertices $V'$,
  edges $E'$ and root $u'$.
\end{definition}

We next define a topology on strongly connected rooted graphs (or
rather on their isomorphism classes; we shall simply refer to these
classes as graphs).  A natural metric between strongly connected
rooted graphs is the following (see Benjamini and
Schramm~\cite{benjamini2011recurrence}, Aldous and
Steele~\cite{aldous2003objective}). Given $(G,u)$ and $(G',u')$, let
\begin{align*}
  D((G,u),(G',u')) = 2^{-R},
\end{align*}
where
\begin{align*}
  R = \sup \{r : B_r(G,u) \cong B_r(G',u')\}.
\end{align*}
This is indeed a metric: the triangle inequality follows immediately,
and a standard diagonalization argument is needed to show that if
$D((G,u),(G',u'))=0$ then $(G,u) \cong (G',u')$.

This metric induces a topology that will be useful to us. As usual,
the basis of this topology is the set of balls of the metric; the ball
of radius $2^{-R}$ around the {\em graph} $(G,u)$ is the set of graphs
$(G',u')$ such that $B_R(G,u) \cong B_R(G',u')$. We refer to
convergence in this topology as convergence to a {\em local limit},
and provide the following equivalent definition for it:
\begin{definition}
  Let $\{(G_r,u_r)\}_{r=1}^\infty$ be a sequence of strongly connected
  rooted graphs. We say that the sequence converges if there exists a
  strongly connected rooted graph $(G',u')$ such that
  \begin{align*}
    B_r(G',u') \cong B_r(G_r,u_r),
  \end{align*}
  for all $r \geq 1$.  We then write
  \begin{align*}
    (G',u')=\lim_{r \to \infty}(G_r,u_r),
  \end{align*}
  and call $(G',u')$ the {\bf local limit} of the sequence
  $\{(G_r,u_r)\}_{r=1}^\infty$.
\end{definition}

Let $\cG_d$ be the set of strongly connected rooted graphs with degree
at most $d$.  Another standard diagonalization argument shows that
$\cG_d$ is {\em compact} (see
again~\cite{benjamini2011recurrence,aldous2003objective}). Then, since
the space is metric, every sequence in $\cG_d$ has a converging
subsequence:
\begin{lemma}
  \label{lemma:compactness}
  Let $\{(G_r,u_r)\}_{r=1}^\infty$ be a sequence of rooted graphs in
  $\cG_d$. Then there exists a subsequence
  $\{(G_{r_i},u_{r_i})\}_{i=1}^\infty$ with $r_{i+1}>r_i$ for all $i$,
  such that $\lim_{i \to \infty}(G_{r_i},u_{r_i})$ exists.
\end{lemma}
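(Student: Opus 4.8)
The plan is to prove sequential compactness directly by a diagonal argument rather than invoking the asserted compactness of $\cG_d$ as a black box; the diagonalization both produces the radius-by-radius matching demanded by the definition of local limit and isolates the one genuinely delicate point. First I would record the combinatorial consequence of the degree bound: for any rooted graph in $\cG_d$ the ball $B_r(G,u)$ has at most $1+d+d^2+\cdots+d^r$ vertices, since from the root at most $d$ new vertices are reached at each of the $r$ steps. Hence, for each fixed $r$, there are only finitely many isomorphism classes of rooted graphs that can occur as $B_r(G,u)$ for some $(G,u)\in\cG_d$, because there are finitely many graphs on a bounded number of labelled vertices and thus finitely many rooted isomorphism types among them.

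Next I would extract the subsequence by iterated pigeonholing. Set $T_0=\N$. Given an infinite index set $T_{r-1}$, the balls $\{B_r(G_j,u_j):j\in T_{r-1}\}$ realize only finitely many isomorphism types, so some type $\beta_r$ occurs on an infinite subset $T_r\subseteq T_{r-1}$. Since balls are nested (for $j\le r$ the radius-$j$ ball of $B_r(G,u)$ is $B_j(G,u)$), the chosen types are compatible: $B_j(\beta_r)\cong\beta_j$ whenever $j\le r$. Choosing $r_i\in T_i$ with $r_i$ strictly increasing then yields a subsequence for which $B_i(G_{r_i},u_{r_i})\cong\beta_i$ for every $i$, and by compatibility $B_j(G_{r_i},u_{r_i})\cong\beta_j$ for all $j\le i$.

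I would then build the local limit $(G',u')$ as the direct limit of the chain $\beta_1\subseteq\beta_2\subseteq\cdots$. Concretely, for each $r$ fix an embedding realizing $\beta_r$ as the radius-$r$ ball of $\beta_{r+1}$ (one exists by the compatibility above); the increasing union along these embeddings is a well-defined countable rooted graph. By construction its radius-$r$ ball is exactly the image of $\beta_r$, so $B_r(G',u')\cong\beta_r\cong B_r(G_{r_i},u_{r_i})$, which is precisely convergence to the local limit in the sense of the definition above. The degree bound $d$ passes to $G'$ automatically, since it holds on every $\beta_r$.

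The main obstacle is the final check that $(G',u')$ actually lies in $\cG_d$, i.e. that the limit is strongly connected. This is the one place where the directed structure bites: directed reachability is not a local property, so for a general strongly connected sequence a shortest directed path from the root to a nearby vertex may leave every fixed ball, and the limit can degenerate (a directed cycle of growing length, rooted at a point, converges to a one-way infinite ray, which is not strongly connected). The honest resolution — and the reason the surrounding development restricts to undirected, or $L$-locally strongly connected, bounded-degree graphs — is that in those families reachability is witnessed by paths of bounded length: each edge $(x,y)$ carries a return path of length at most $L$ (length $1$ in the undirected case), and such bounded witnesses survive the passage to the limit. Under that hypothesis every edge of $G'$ has its return path inside some $\beta_r$, and combined with forward reachability of all of $G'$ from the root this makes $G'$ strongly connected, so the limit lies in the relevant compact family. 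I would carry out this verification for exactly the class of graphs used in Theorem~\ref{thm:bounded-learning}.
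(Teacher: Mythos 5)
Your proposal is correct, and it is essentially the ``standard diagonalization argument'' that the paper declines to spell out (it defers to Benjamini--Schramm and Aldous--Steele and states Lemma~\ref{lemma:compactness} without proof). Your execution of it --- finitely many isomorphism types of radius-$r$ balls under the degree bound, iterated pigeonholing along nested infinite index sets, a diagonal subsequence, and the direct limit of the compatible chain $\beta_1\subseteq\beta_2\subseteq\cdots$ --- is exactly the intended argument, and the nesting identity $B_j(B_r(G,u),u)=B_j(G,u)$ that you rely on does hold for the paper's definition of balls. What you add, and what deserves emphasis, is the observation that the final step genuinely fails for the class $\cG_d$ as the paper defines it: directed cycles of growing length are strongly connected with degree $1$, yet their radius-$r$ balls are directed paths, so no subsequence converges to a \emph{strongly connected} limit, and Lemma~\ref{lemma:compactness} is false as literally stated. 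This matters because the paper's proof of Lemma~\ref{lemma:inf-graphs-closed} deduces compactness of $\InfGraphs_d^r$ by exhibiting it as a closed subset of the ``compact'' space $\cG_d$; your route repairs this by running the diagonal argument directly on the family at hand and verifying strong connectivity of the limit there, using the bounded-length return paths available in the undirected (length $1$) and $L$-locally strongly connected (length $\leq L$) cases --- precisely the families used in Theorem~\ref{thm:bounded-learning} and its extension. So your proof is not merely a filled-in version of the paper's; it also isolates and fixes a real overstatement in the lemma, at the modest cost of proving compactness separately for each compact subfamily rather than once for all of $\cG_d$.
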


We next define {\em local properties} of rooted graphs.
\begin{definition}
  \label{def:local-property}
  Let $P$ be property of rooted graphs or a Boolean predicate on
  rooted graphs. We write $(G,u) \in P$ if $(G,u)$ has the property,
  and $(G,u) \notin P$ otherwise.

  We say that $P$ is a {\bf local property} if, for every $(G,u) \in
  P$ there exists an $r>0$ such that if $B_r(G,u) \cong B_r(G',u')$,
  then $(G',u') \in P$. Let $r$ be such that $B_r(G,u) \cong
  B_r(G',u') \Rightarrow (G',u') \in P$. Then we say that {\bf $(G,u)$
    has property $P$ with radius $r$}, and denote $(G,u) \in P^{(r)}$.
\end{definition}
That is, if $(G,u)$ has a local property $P$ then there is some $r$
such that knowing the ball of radius $r$ around $u$ in $G$ is
sufficient to decide that $(G,u)$ has the property $P$. An alternative
name for a local property would therefore be a {\em locally decidable}
property. In our topology, local properties are nothing but {\em open
  sets}: the definition above states that if $(G,u) \in P$ then there
exists an element of the basis of the topology that includes $(G,u)$
and is also in $P$. This is a necessary and sufficient condition for
$P$ to be open.

We use this fact to prove the following lemma.
\begin{definition}
  \label{def:infgraphs}
  Let $\InfGraphs_d$ be the set of infinite, connected, undirected
  graphs of degree at most $d$, and let $\InfGraphs_d^r$ be the set of
  $\InfGraphs_d$-rooted graphs
  \begin{align*}
    \InfGraphs_d^r = \{(G,u) \,:\, G \in \InfGraphs_d, u \in G\}.
  \end{align*}
\end{definition}
\begin{lemma}
  \label{lemma:inf-graphs-closed}
  $\InfGraphs_d^r$ is compact.
\end{lemma}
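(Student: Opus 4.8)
The plan is to realize $\InfGraphs_d^r$ as a closed subset of the compact space $\cG_d$ of strongly connected rooted graphs of degree at most $d$, and then invoke the fact that a closed subset of a compact space is compact. First I would record the inclusion $\InfGraphs_d^r \subseteq \cG_d$: every connected undirected graph is strongly connected (each undirected edge yields directed edges in both directions), and the degree bound $d$ is common to both sets. Since $\cG_d$ is metrizable and compact by Lemma~\ref{lemma:compactness}, it suffices to show that $\InfGraphs_d^r$ is closed, which in a metric space amounts to checking that it is sequentially closed: whenever $(G_i,u_i) \in \InfGraphs_d^r$ and $(G_i,u_i) \to (G',u')$ with $(G',u') \in \cG_d$, I must show $(G',u') \in \InfGraphs_d^r$. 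Convergence in the metric $D$ means that for each fixed $r$ one has $B_r(G',u') \cong B_r(G_i,u_i)$ for all sufficiently large $i$, which is the only input about the limit I will use.

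Then I would verify that each of the three defining properties of $\InfGraphs_d$ passes to $(G',u')$. Connectedness is immediate, since $(G',u')$ lies in $\cG_d$ and is therefore strongly connected, which for an undirected graph coincides with being connected. Undirectedness I would argue locally: each ball $B_r(G_i,u_i)$ is an induced subgraph of an undirected graph, hence undirected, so its isomorphic copy $B_r(G',u')$ is undirected as well; and since any edge $(a,b)$ of $G'$ lies inside $B_r(G',u')$ for all large enough $r$ (both endpoints being at finite distance from $u'$), the reverse edge $(b,a)$ must also be present in $G'$, so $G'$ is undirected. The degree bound is inherited directly from membership in $\cG_d$.

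The one genuinely non-obvious step, and the main obstacle, is preservation of \emph{infiniteness}, since being infinite is not a local (open) property and need not pass to limits for free. Here I would exploit the combination of bounded degree and connectivity. The key observation is that in any infinite, connected graph of degree at most $d$, the ball $B_r$ around a vertex is finite (bounded degree, finite radius) yet cannot exhaust the graph, so there is always a vertex at distance strictly greater than $r$; taking a shortest path to it produces a vertex at every distance $0,1,\ldots,r$, whence $|B_r(G_i,u_i)| \geq r+1$. Transferring this bound through the isomorphism $B_r(G',u') \cong B_r(G_i,u_i)$ gives $|B_r(G',u')| \geq r+1$ for every $r$, so the balls of $G'$ have unbounded size and $G'$ is infinite. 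Combining the three preservation statements shows $(G',u') \in \InfGraphs_d^r$, so $\InfGraphs_d^r$ is closed in the compact space $\cG_d$ and is therefore itself compact.
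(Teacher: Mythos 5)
Your proof is correct and takes essentially the same route as the paper's: realize $\InfGraphs_d^r$ as a closed subset of the compact space $\cG_d$ from Lemma~\ref{lemma:compactness} and conclude compactness. The paper establishes closedness by noting that finiteness and directedness are local (hence open) properties of graphs in the complement, while you verify sequential closedness directly; your explicit ball-counting bound $|B_r(G',u')| \geq r+1$ simply fills in the detail the paper leaves implicit for why infiniteness survives the limit.
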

\begin{proof}
  Lemma~\ref{lemma:compactness} states that $\cG_d$, the set of
  strongly connected rooted graphs of degree at most $d$, is
  compact. Since $\InfGraphs_d^r$ is a subset of $\cG_d$, it remains
  to show that $\InfGraphs_d^r$ is closed in $\cG_d$.

  The complement of $\InfGraphs_d^r$ in $\cG_d$ is the set of graphs
  in $\cG_d$ that are either finite or directed. These are both local
  properties: if $(G,u)$ is finite (or directed), then there exists a
  radius $r$ such that examining $B_r(G,u)$ is enough to determine
  that it is finite (or directed). Hence the sets of finite graphs and
  directed graphs in $\cG_d$ are open in $\cG_d$, their intersection
  is open in $\cG_d$, and their complement, $\InfGraphs_d^r$, is
  closed in $\cG_d$.
\end{proof}

We now state and prove the main lemma of this subsection. Note that
the set of graphs $\InfGraphs_d$ satisfies the conditions of this lemma.
\begin{lemma}
  \label{lemma:local_property}
  Let $\GraphFam$ be a set of infinite, strongly connected graphs, let
  $\GraphFam^r$ be the set of $\GraphFam$-rooted graphs
  \begin{align*}
     \GraphFam^r = \{(G,u) \,:\, G \in \GraphFam, u \in G\},
  \end{align*}
  and assume that $\GraphFam$ is such that $\GraphFam^r$ is compact.

  Let $P$ be a local property such that for each $G \in \GraphFam$
  there exists a vertex $w \in G$ such that $(G,w) \in P$. Then for
  each $G \in \GraphFam$ there exist an $r_0$ and infinitely many
  distinct vertices $\{w_n\}_{n = 1}^\infty$ such that $(G,w_n) \in
  P^{(r_0)}$ for all $n$.
\end{lemma}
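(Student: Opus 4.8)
The plan is to fix $G \in \GraphFam$ and produce both the radius $r_0$ and the infinitely many vertices by a compactness-and-pullback argument, rather than by trying to take a limit of the good vertices supplied by the hypothesis. The naive approach fails precisely because a local property $P$ is an open set but need not be closed: if $w_n$ are good vertices of $G$ and $(G,w_n)$ converges to some $(G^*,w^*)$, there is no reason for $(G^*,w^*)$ to be good, and the radius witnessing goodness may blow up along the sequence. The trick is to run the limiting argument in the other direction: root $G$ at \emph{arbitrary} vertices, pass to a limit graph, apply the hypothesis to the \emph{limit} graph, and pull the resulting good vertex \emph{back} into $G$.

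Concretely, I would first record that compactness of $\GraphFam^r$ forces a uniform degree bound: the degree of the root is determined by $B_1$, so any sequence of rooted graphs whose root-degrees tend to infinity has no convergent subsequence; hence there is a $d$ with every vertex of every $G \in \GraphFam$ of degree at most $d$, and in particular every ball $B_s(G,w)$ is finite. Next, since $G$ is infinite I choose distinct vertices $u_1,u_2,\dots \in G$. By sequential compactness of $\GraphFam^r$ (cf.\ Lemma~\ref{lemma:compactness}) a subsequence $(G,u_{n_i})$ converges to a local limit $(G^*,u^*) \in \GraphFam^r$, so that $G^* \in \GraphFam$.

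The heart of the argument is the pull-back. By hypothesis $G^*$ has a vertex $w^*$ with $(G^*,w^*)\in P$, and since $P$ is local (Definition~\ref{def:local-property}) I may fix a finite $r_0$ with $(G^*,w^*) \in P^{(r_0)}$; set $s = d(u^*,w^*)$. Membership in $P^{(r_0)}$ is determined solely by the isomorphism type of the radius-$r_0$ ball, so it suffices to reproduce $B_{r_0}(G^*,w^*)$ inside $G$. For $i$ large, convergence yields a rooted isomorphism $h_i : B_{s+r_0}(G^*,u^*) \to B_{s+r_0}(G,u_{n_i})$; writing $w_i = h_i(w^*)$, one checks, using that any path of length at most $r_0$ from $w_i$ stays within distance $s+r_0$ of $u_{n_i}$, that $B_{r_0}(G,w_i) \cong B_{r_0}(G^*,w^*)$, and therefore $(G,w_i) \in P^{(r_0)}$. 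This is exactly where a single common radius $r_0$ appears for free: it is inherited once from the limit, rather than extracted separately at each vertex.

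Finally I would establish distinctness and conclude. Each $w_i$ satisfies $d(u_{n_i},w_i)=s$, so if a single vertex $w$ coincided with $w_i$ for infinitely many $i$, the corresponding distinct $u_{n_i}$ would all lie in the finite ball $B_s(G,w)$, a contradiction; hence $\{w_i\}$ contains infinitely many distinct vertices, all in $P^{(r_0)}$, as required. I expect the main obstacle to be conceptual rather than computational: one must realize that goodness and, crucially, its certifying radius can only be transported \emph{from} a limit graph \emph{into} $G$, not pushed from $G$ to a limit, since openness of $P$ gives no control in the latter direction. The one genuinely technical point is the routine verification that the local-limit isomorphism of the large balls restricts to an isomorphism of the radius-$r_0$ balls around $w^*$ and $w_i$, so that the $P^{(r_0)}$-certificate is faithfully carried over.
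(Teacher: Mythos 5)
Your proof is correct and follows essentially the same route as the paper's: root $G$ at a sequence of vertices, extract a local limit $(G^*,u^*)\in\GraphFam^r$ by compactness, apply the hypothesis and locality of $P$ to the limit graph to get a single radius $r_0$, and pull the good vertex back into $G$ via the ball isomorphisms. The only (harmless) difference is in how distinctness of the $w_i$ is secured: the paper pre-selects roots with pairwise disjoint balls, while you take arbitrary distinct roots and argue afterwards using the finiteness of balls, which you correctly justify by deriving a uniform degree bound from compactness.
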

\begin{figure}[h]
  \centering
  \includegraphics[scale=1]{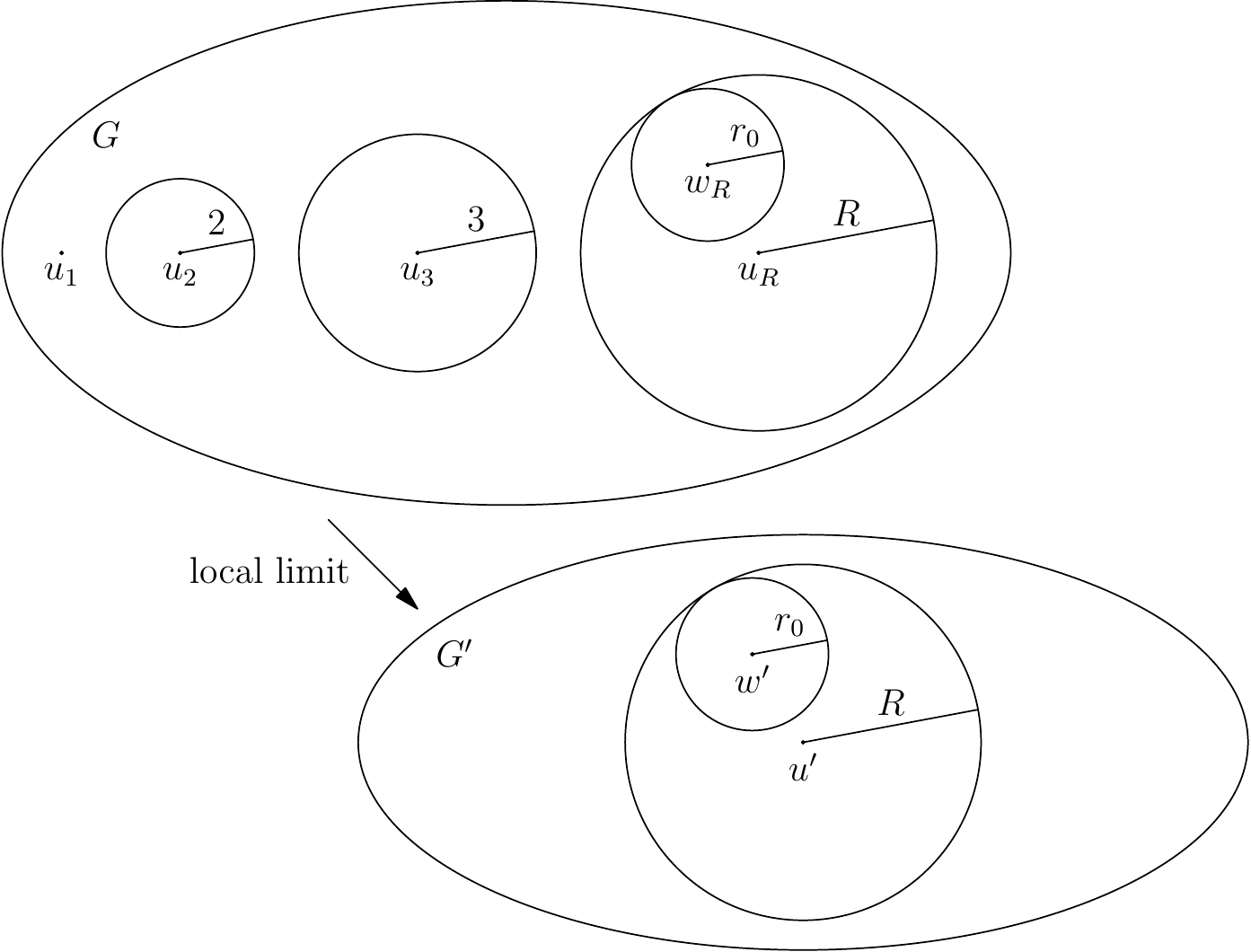}
  \caption{\label{fig:local_property} Schematic diagram of the proof
    of lemma~\ref{lemma:local_property}. The rooted graph $(G',u')$ is
    a local limit of $(G,u_r)$. For $r \geq R$, the ball $B_R(G',u')$
    is isomorphic to the ball $B_R(G,u_r)$, with $w' \in G'$
    corresponding to $w_r \in G$.}
\end{figure}

\begin{proof}
  Let $G$ be an arbitrary graph in $\GraphFam$. Consider a sequence
  $\{v_r\}_{r=1}^\infty$ of vertices in $G$ such that for all $r,s
  \in \N$ the balls $B_r(G,v_r)$ and $B_s(G,v_s)$ are disjoint.

  Since $\GraphFam^r$ is compact, the sequence $\{(G,
  v_r)\}_{r=1}^\infty$ has a converging subsequence $\{(G,
  v_{r_i})\}_{r=1}^\infty$ with $r_{i+1}>r_i$.  Write $u_r = v_{r_i}$,
  and let
  \begin{align*}
    (G',u') = \lim_{r \to \infty}(G, u_r).
  \end{align*}
  Note that since $\GraphFam^r$ is compact, $(G',u') \in \GraphFam^r$
  and in particular $G' \in \GraphFam$ is an infinite, strongly
  connected graph.  Note also that since $r_{i+1}>r_i$, it also holds
  that the balls $B_r(G,u_r)$ and $B_s(G,u_s)$ are disjoint for all
  $r,s \in \N$.

  Since $G' \in \GraphFam$, there exists a vertex $w' \in G'$ such that
  $(G',w') \in P$.  Since $P$ is a local property, $(G',w') \in
  P^{(r_0)}$ for some $r_0$, so that if $B_{r_0}(G',w') \cong
  B_{r_0}(G,w)$ then $(G,w) \in P$.

  Let $R = d(u',w')+r_0$, so that $B_{r_0}(G',w') \subseteq
  B_R(G',u')$. Then, since the sequence $(G,u_r)$ converges to
  $(G',u')$, for all $r \geq R$ it holds that
  $B_R(G,u_r)\cong~B_R(G',u')$. Therefore, for all $r>R$ there exists
  a vertex $w_r \in B_R(G,u_r)$ such that $B_{r_0}(G,w_r) \cong
  B_{r_0}(G',w')$. Hence $(G,w_r) \in P^{(r_0)}$ for all $r>R$ (see
  Fig~\ref{fig:local_property}). Furthermore, for $r,s > R$, the balls
  $B_R(G,u_r)$ and $B_R(G,u_s)$ are disjoint, and so $w_r \neq w_s$.

  We have therefore shown that the vertices $\{w_r\}_{r > R}$ are an
  infinite set of distinct vertices such that $(G,w_r) \in P^{(r_0)}$,
  as required.

\end{proof}

\subsection{Coupling isomorphic balls}
\label{sec:isomorphics-balls}
This section includes three claims that we will use repeatedly
later. Their spirit is that everything that happens to an agent up to
time $t$ depends only on the state of the world and a ball of radius
$t$ around it.

Recall that $\cF_u(t)$, the information available to agent $u$ at time
$t$, is the algebra generated by $\psignal_u$ and $\action_w(t')$ for all
$w$ neighbors of $u$ and $t' < t$. Recall that $I_u(t)$ denotes this
exact set of actions:
\begin{align*}
  I_u(t) = \left\{ \bar{\action}_w(t):\:w \in N(u)\right\} = \left\{
    \action_w(t'):\:w \in N(u),t'<t\right\}.
\end{align*}
\begin{claim}
  \label{clm:a-from_cf}
  For all agents $u$ and times $t$, $I_u(t)$ a deterministic
  function of $\psignal(B_t(G,u))$.
\end{claim}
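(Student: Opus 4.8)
The plan is to prove the claim by induction on $t$, carrying along a companion statement about the actions so that the recursion closes. Concretely, I would prove simultaneously that, for every agent $u$ and every $t \geq 1$: (i) $I_u(t)$ is a deterministic function of $\psignal(B_t(G,u))$, and (ii) $\action_u(t)$ (and hence the belief $\belief_u(t)$) is a deterministic function of $\psignal(B_t(G,u))$. Part (i) is exactly the claim; (ii) is the extra invariant that feeds the next step. For the base case $t=1$, note that $I_u(1) = \{\action_w(t') : w \in N(u),\, t' < 1\} = \emptyset$ is constant, and $\action_u(1)$ is determined by $\cF_u(1) = \sigma(\psignal_u)$, so it depends only on $\psignal_u = \psignal(B_0(G,u))$. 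Both are therefore (trivially) functions of $\psignal(B_1(G,u))$.

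\textbf{Inductive step.} The one geometric input I would record is that if $w \in N(u)$ then, for every $x$ with $d(w,x) \leq t$, we have $d(u,x) \leq d(u,w) + d(w,x) \leq 1 + t$; hence $B_t(G,w) \subseteq B_{t+1}(G,u)$ as vertex sets. This uses only the directed distance, so it is valid even though $G$ may be directed. Now decompose $I_u(t+1) = I_u(t) \cup \{\action_w(t) : w \in N(u)\}$. By the inductive hypothesis (i), $I_u(t)$ is a function of $\psignal(B_t(G,u)) \subseteq \psignal(B_{t+1}(G,u))$, and by hypothesis (ii) applied to each neighbor $w$, the action $\action_w(t)$ is a function of $\psignal(B_t(G,w)) \subseteq \psignal(B_{t+1}(G,u))$. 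Consequently $I_u(t+1)$ is a function of $\psignal(B_{t+1}(G,u))$, which is (i) at time $t+1$. Part (ii) at time $t+1$ then follows immediately, since $\cF_u(t+1) = \sigma(\psignal_u, I_u(t+1))$ forces $\action_u(t+1)$ to be a function of $(\psignal_u, I_u(t+1))$, which we have just exhibited as a function of $\psignal(B_{t+1}(G,u))$.

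\textbf{Main obstacle.} The only step beyond bookkeeping with ball inclusions is the assertion used in (ii) that $\action_w(t)$ is \emph{genuinely} a deterministic function of the generators $(\psignal_w, I_w(t))$ of $\cF_w(t)$. Here I would invoke the Doob--Dynkin factorization: the belief $\belief_w(t) = \CondP{S=1}{\cF_w(t)}$ is by definition $\cF_w(t)$-measurable, so after fixing a version it equals a measurable function of $(\psignal_w, I_w(t))$, and the action is then read off by comparison with $1/2$. The subtlety worth flagging is that this conditional probability is defined only up to null sets and its \emph{computation} implicitly uses the global law of $\mathbb{P}$; what saves us is that a fixed version exists whose value depends only on the realized generators, which is precisely what ``deterministic function'' demands. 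One must also take the standing convention that the tie-breaking rule is itself a deterministic, $\cF_w(t)$-measurable function, so that ties at $\belief_w(t)=1/2$ do not reintroduce external randomness; under that convention the induction goes through and the claim follows.
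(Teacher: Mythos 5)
Your proof is correct and follows essentially the same route as the paper's: induction on $t$, the ball inclusion $B_t(G,w)\subseteq B_{t+1}(G,u)$ for $w\in N(u)$, and the observation that each neighbor's action is a function of its own signal together with its observed history. The only difference is presentational — you carry the statement about $\action_u(t)$ as an explicit companion invariant and spell out the Doob--Dynkin/tie-breaking measurability point, which the paper derives on the fly and leaves implicit.
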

Recall (Definition~\ref{def:group-signal}) that $\psignal(B_t(G,u))$
are the private signals of the agents in $B_t(G,u)$, the ball of
radius $t$ around $u$ (Definition~\ref{def:balls}).
\begin{proof}
  We prove by induction on $t$. $I_u(1)$ is empty, and so the claim
  holds for $t=1$.

  Assume the claim holds up to time $t$. By definition,
  $\action_u(t+1)$ is a function of $\psignal_u$ and of $I_u(t+1)$,
  which includes $\{\action_w(t'):w\in N(u), t' \leq
  t\}$. $\action_w(t')$ is a function of $\psignal_w$ and $I_w(t')$,
  and hence by the inductive assumption it is a function of
  $\psignal(B_{t'}(G,w))$. Since $t'<t+1$ and the distance between $u$
  and $w$ is one, $W(B_{t'}(G,w)) \subseteq W(B_{t+1}(G,u))$, for all
  $w \in N(u)$ and $t' \leq t$ . Hence $I_u(t+1)$ is a function of
  $\psignal(B_{t+1}(G,u))$, the private signals in $B_{t+1}(G,u)$.
\end{proof}
The following lemma follows from Claim~\ref{clm:a-from_cf} above:
\begin{lemma}
  \label{lemma:p-iso}
  Consider two processes with identical private signal distributions
  $(\mu_0,\mu_1)$, on different graphs $G = (V,E)$ and $G' = (V',E')$.

  Let $t \geq 1$, $u \in V$ and $u' \in V'$ be such that there exists
  a rooted graph isomorphism $h:B_t(G,u) \to B_t(G',w')$.

  Let $M$ be a random variable that is measurable in $\cF_u(t)$. Then
  there exists an $M'$ that is measurable in $\cF_{u'}(t)$ such that
  the distribution of $(M,S)$ is identical to the
  distribution of $(M',S')$.
\end{lemma}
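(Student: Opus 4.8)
The plan is to show that everything agent $u$ observes by time $t$ is a deterministic function of the private signals in the ball $B_t(G,u)$, that this function is governed entirely by the rooted-isomorphism type of that ball, and finally that the joint law of those signals together with the state is preserved under the isomorphism $h$.

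First I would record the measurability reduction. By Claim~\ref{clm:a-from_cf}, $I_u(t)$ is a deterministic function of $\psignal(B_t(G,u))$; since $u \in B_t(G,u)$, the signal $\psignal_u$ is itself one of these signals, so $\cF_u(t) = \sigma(\psignal_u, I_u(t)) \subseteq \sigma(\psignal(B_t(G,u)))$. Consequently the given $M$, being $\cF_u(t)$-measurable, can be written as $M = g\big(\psignal(B_t(G,u))\big)$ for some measurable $g$.

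The central step is to argue that $g$ depends only on the rooted graph $B_t(G,u)$ up to isomorphism, so that the ``same'' computation carried out in $G'$ produces an $\cF_{u'}(t)$-measurable variable. Concretely, I would re-run the induction behind Claim~\ref{clm:a-from_cf}, tracking not merely \emph{that} each intermediate action is a function of a sub-ball of signals but \emph{which} function: for a vertex $w$ at distance $d \le t$ from $u$, the truncated action record $\bar{\action}_w(t-d)$ is computed from $\psignal(B_{t-d}(G,w))$ by a rule that references only the edge relations inside that sub-ball. Because $h$ is a rooted isomorphism of $B_t(G,u)$ onto $B_t(G',u')$, it carries each sub-ball $B_{t-d}(G,w)$ onto $B_{t-d}(G',h(w))$ and preserves all edge relations, so the identical rule applied to the relabelled signals $\big(\psignal'_{h(v)}\big)_{v\in B_t(G,u)}$ reproduces $I_{u'}(t)$, while $\psignal_u$ maps to $\psignal'_{u'}$. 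Defining $M' = g\big((\psignal'_{h(v)})_{v \in B_t(G,u)}\big)$ therefore yields an $\cF_{u'}(t)$-measurable random variable. Finally I would transfer the distribution: conditioned on $S=s$ the family $\{\psignal_v\}$ is i.i.d.\ $\mu_s$, and likewise in the primed process conditioned on $S'=s$; since $h$ is a bijection between the two vertex sets and $\P{S=1}=\P{S'=1}=\tfrac12$, the pair $\big(S,(\psignal_v)_{v\in B_t(G,u)}\big)$ has the same law as $\big(S',(\psignal'_{h(v)})_{v\in B_t(G,u)}\big)$. Applying the common measurable map $(s,\text{signals})\mapsto (g(\text{signals}),s)$ to both sides gives that $(M,S)$ and $(M',S')$ are identically distributed.

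The main obstacle is the middle step: making rigorous the intuitively obvious fact that the Bayesian dynamics ``commute'' with rooted graph isomorphism. The care required is in the bookkeeping of the induction — confirming that at each stage the function computing an agent's action consults only edge relations within the relevant sub-ball, so that it is literally invariant under relabelling by $h$ — and in the treatment of ties, which under the deterministic reading of Claim~\ref{clm:a-from_cf} must be broken by a rule that is itself transported along $h$ (for the non-atomic applications of this lemma ties occur with probability zero, so this point is moot there).
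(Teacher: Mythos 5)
Your argument is essentially the paper's own proof: the paper couples the two processes by setting $S=S'$ and $\psignal_w=\psignal_{h(w)}$, invokes Claim~\ref{clm:a-from_cf} to conclude $I_u(t)=I_{u'}(t)$ under the identification by $h$, and defines $M'$ as the same function of $(I_{u'}(t),\psignal_{u'})$ --- which is exactly your pushforward of the law of $(S,\psignal(B_t(G,u)))$ phrased as a coupling. Your extra care about the dynamics being invariant under relabelling and about deterministic tie-breaking only makes explicit what the paper leaves implicit; the proof is correct.
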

Recall that a graph isomorphism between $G=(V,E)$ and
$G'=(V',E')$ is a bijective function $h :V \to V$ such that $(u,v) \in
E$ iff $(h(u),h(v)) \in E'$.
\begin{proof}
  Couple the two processes by setting $S=S'$, and letting
  $\psignal_w=\psignal_{w'}$ when $h(w)=w'$. Note that it follows that
  $W_u=W_{u'}$. By Claim~\ref{clm:a-from_cf} we have that
  $I_u(t)=I_{u'}(t)$, when using $h$ to identify vertices in $V$ with
  vertices in $V'$.

  Since $M$ is measurable in $\cF_u(t)$, it must, by the definition of
  $\cF_u(t)$, be a function of $I_u(t)$ and $\psignal_u$. Denote then
  $M=f(I_u(t),\psignal_u)$. Since we showed that $I_u(t)=I_{u'}(t)$,
  if we let $M'=f(I_{u'}(t),\psignal_{u'})$ then the distribution of
  $(M,S)$ and $(M',S')$ will be identical.
\end{proof}
In particular, we use this lemma in the case where $M$ is an estimator
of $S$. Then this lemma implies that the probability that $M=S$ is
equal to the probability that $M'=S'$.

Recall that $p_u(t) = \P{\action_u(t) = S} = \max_{A \in
  \cF_u(t)}\P{A=S}$. Hence we can apply this lemma (\ref{lemma:p-iso})
above to $\action_u(t)$ and $\action_{u'}(t)$:
\begin{corollary}
  \label{cor:p-iso}
  If $B_t(G,u)$ and $B_t(G',u')$ are isomorphic then $p_u(t) =
  p_{u'}(t)$.
\end{corollary}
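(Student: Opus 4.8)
The plan is to derive this as an immediate consequence of Lemma~\ref{lemma:p-iso} together with the fact, recalled just before the corollary (and following Remark~\ref{rem:map}), that $\action_u(t)$ is the optimal estimator of $S$ available at time $t$. Concretely, I would start from the characterization
\[
  p_u(t) = \P{\action_u(t)=S} = \max_{A \in \cF_u(t)}\P{A=S},
\]
which says that $p_u(t)$ is not merely the success probability of one particular estimator, but the \emph{best} success probability achievable by any $\cF_u(t)$-measurable estimator of $S$; the same characterization holds for $p_{u'}(t)$ on $G'$. The corollary then reduces to showing that these two optima coincide.

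First I would apply Lemma~\ref{lemma:p-iso} with $M = \action_u(t)$, which is measurable in $\cF_u(t)$. Using the hypothesized rooted isomorphism $h : B_t(G,u) \to B_t(G',u')$, the lemma produces an $\cF_{u'}(t)$-measurable random variable $M'$ whose joint distribution with $S'$ equals that of $(M,S)$. In particular $M'$ is $\{0,1\}$-valued almost surely, and since the event $\{M=S\}$ depends only on the joint law of $(M,S)$, we obtain $\P{M'=S'} = \P{M=S} = p_u(t)$. Because $M'$ is one specific $\cF_{u'}(t)$-measurable estimator while $p_{u'}(t)$ is the maximum over all such estimators, this yields $p_{u'}(t) \geq p_u(t)$.

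The reverse inequality follows by symmetry. A rooted graph isomorphism is a bijection, so $h^{-1}$ is a rooted isomorphism from $B_t(G',u')$ to $B_t(G,u)$, and repeating the argument with the roles of $(G,u)$ and $(G',u')$ exchanged gives $p_u(t) \geq p_{u'}(t)$. Combining the two inequalities yields $p_u(t) = p_{u'}(t)$, as claimed.

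There is essentially no hard step here: the substantive work has already been carried out in Claim~\ref{clm:a-from_cf} and Lemma~\ref{lemma:p-iso}, which together establish the distribution-preserving coupling of the two processes on isomorphic balls. The only point requiring care is the interaction between the one-sided form of Lemma~\ref{lemma:p-iso} and the optimality characterization of $p_u(t)$: the lemma guarantees only that each estimator on one side has a distributional twin on the other, and it is the fact that $p_u(t)$ and $p_{u'}(t)$ are \emph{maxima} over all measurable estimators that upgrades this to an equality of the optima once symmetry is invoked.
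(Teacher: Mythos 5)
Your proposal is correct and follows exactly the route the paper intends: the paper's own justification is precisely to apply Lemma~\ref{lemma:p-iso} to $\action_u(t)$ together with the characterization $p_u(t)=\max_{A\in\cF_u(t)}\P{A=S}$, with the two-sided equality obtained by symmetry. Your write-up simply makes explicit the symmetry step that the paper leaves implicit.
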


\subsection{$\delta$-independence}
\label{sec:delta-ind}
To prove that agents learn $S$ we will show that the agents must, over
the duration of this process, gain access to a large number of
measurements of $S$ that are {\em almost} independent. To formalize
the notion of almost-independence we define $\delta$-independence and
prove some easy results about it. The proofs in this subsection are
relatively straightforward.

Let $\mu$ and $\nu$ be two measures defined on the same space. We
denote the total variation distance between them by
$\dtv(\mu,\nu)$. Let $A$ and $B$ be two random variables with joint
distribution $\mu_{(A,B)}$. Then we denote by $\mu_A$ the marginal
distribution of $A$, $\mu_B$ the marginal distribution of $B$, and
$\mu_A\times\mu_B$ the product distribution of the marginal distributions.

\begin{definition}
  Let $(X_1,X_2,\dots,X_k)$ be random variables.  We refer to them as {\bf
    $\delta$-independent} if their joint distribution
  $\mu_{(X_1,\ldots,X_k)}$ has total
  variation distance of at most $\delta$ from the product of their
  marginal distributions $\mu_{X_1}\times\cdots\times\mu_{X_k}$:
  \begin{align*}
    \dtv(\mu_{(X_1,\ldots,X_k)}, \mu_{X_1}\times\cdots\times\mu_{X_k}) \leq \delta.
  \end{align*}
\end{definition}
Likewise, $(X_1,\ldots,X_l)$ are {\bf $\delta$-dependent} if the
distance between the distributions is more than $\delta$.

We remind the reader that a coupling $\nu$, between two random
variables $A_1$ and $A_2$ distributed $\nu_1$ and $\nu_2$, is a
distribution on the product of the spaces $\nu_1,\nu_2$ such that the
marginal of $A_i$ is $\nu_i$. The total variation distance between
$A_1$ and $A_2$ is equal to the minimum, over all such couplings
$\nu$, of $\nu(A_1 \neq A_2)$.

Hence to prove that $X,Y$ are
$\delta$-independent it is sufficient to show that there exists a
coupling $\nu$ between $\nu_1$, the joint distribution of $(X,Y)$ and
$\nu_2$, the products of the marginal distributions of $X$ and $Y$,
such that $\nu((X_1,Y_1) \neq (X_2,Y_2)) \leq \delta$.

Alternatively, to prove that $(A,B)$ are $\delta$-independent, one
could directly bound the total variation distance between
$\mu_{(A,B)}$ and $\mu_A\times\mu_B$ by $\delta$. This is often done
below using the fact that the total variation distance satisfies the
triangle inequality $\dtv(\mu,\nu) \leq
\dtv(\mu,\gamma)+\dtv(\gamma,\nu)$.

We state and prove some straightforward claims regarding
$\delta$-independence.

\begin{claim}
  \label{clm:delta-independent}
  Let $A$, $B$ and $C$ be random variables such that $\P{A \neq B} \leq
  \delta$ and $(B,C)$ are $\delta'$-independent. Then $(A,C)$ are
  $2\delta+\delta'$-independent.
\end{claim}
\begin{proof}
  Let $\mu_{(A,B,C)}$ be a joint distribution of $A$, $B$ and $C$ such
  that $\P{A \neq B} \leq \delta$.

  Since $\P{A \neq B} \leq \delta$, $\P{(A,C) \neq (B,C)} \leq
  \delta$, in both cases that $A,B,C$ are picked from either
  $\mu_{(A,B,C)}$ or $\mu_{(A,B)}\times\mu_C$. Hence
  \begin{align*}
    \dtv(\mu_{(A,C)},\mu_{(B,C)}) \leq \delta
  \end{align*}
  and
  \begin{align*}
    \dtv(\mu_A\times\mu_C,\mu_B\times\mu_C) \leq \delta.
  \end{align*}

  Since $(B,C)$ are $\delta'$-independent,
  \begin{align*}
    \dtv(\mu_B\times\mu_C,\mu_{(B,C)}) \leq \delta'.
  \end{align*}

  The claim follows from the triangle inequality
  \begin{align*}
    \dtv(\mu_{(A,C)},\mu_A\times\mu_C) &\leq
    \dtv(\mu_{(A,C)},\mu_{(B,C)}) + \dtv(\mu_{(B,C)},\mu_B\times\mu_C)
    + \dtv(\mu_B\times\mu_C,\mu_A\times\mu_C)\\
    &\leq 2\delta+\delta'.
  \end{align*}

\end{proof}

\begin{claim}
  \label{clm:function-independent}
  Let $(X,Y)$ be $\delta$-independent, and let $Z = f(Y,B)$ for some
  function $f$ and $B$ that is independent of both $X$ and $Y$. Then
  $(X,Z)$ are also $\delta$-independent.
\end{claim}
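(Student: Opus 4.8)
The goal is to show $\dtv(\mu_{(X,Z)}, \mu_X \times \mu_Z) \le \delta$, given that $\dtv(\mu_{(X,Y)}, \mu_X \times \mu_Y) \le \delta$. There are two essentially equivalent routes: a ``data-processing'' argument (total variation cannot increase under a fixed measurable map, and is unchanged by tensoring both distributions with a common independent factor), or the coupling characterization recalled just before the claim. I would use the coupling version, since it matches the style of Claim~\ref{clm:delta-independent} and makes the role of $B$ transparent.

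The plan is as follows. Since $(X,Y)$ are $\delta$-independent, there is a coupling carrying two pairs $(X_1,Y_1) \sim \mu_{(X,Y)}$ and $(X_2,Y_2) \sim \mu_X \times \mu_Y$ with $\P{(X_1,Y_1) \neq (X_2,Y_2)} \le \delta$. I would enlarge this probability space by a random variable $B$ drawn from $\mu_B$ independently of everything above, and then set $Z_i = f(Y_i, B)$ for $i = 1, 2$. By construction $(X_1, Z_1) = (X_1, f(Y_1, B))$ has the law of $(X, f(Y,B)) = (X,Z)$, i.e.\ $\mu_{(X,Z)}$, because the triple $(X_1, Y_1, B)$ reproduces the joint law of $(X, Y, B)$ exactly (here I use that $B$ is independent of $(X,Y)$). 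The heart of the argument is then the observation that whenever $(X_1, Y_1) = (X_2, Y_2)$ we also have $X_1 = X_2$ and $Z_1 = f(Y_1, B) = f(Y_2, B) = Z_2$, since the same $B$ feeds both; hence $\P{(X_1, Z_1) \neq (X_2, Z_2)} \le \P{(X_1, Y_1) \neq (X_2, Y_2)} \le \delta$.

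To finish I must check that $(X_2, Z_2) \sim \mu_X \times \mu_Z$, which is the one step requiring care. The first coordinate is immediate, $X_2 \sim \mu_X$. For independence and the correct marginal of the second coordinate, I use that $(X_2, Y_2) \sim \mu_X \times \mu_Y$ and that $B$ is independent of this pair, so $X_2$ is independent of $(Y_2, B)$ and therefore of $Z_2 = f(Y_2, B)$. Finally, the marginal of $Z_2$ is the pushforward of $\mu_Y \times \mu_B$ under $f$; because $B$ is independent of $Y$ in the original model, this pushforward is exactly the true law $\mu_Z$ of $Z = f(Y,B)$. Thus $(X_2, Z_2) \sim \mu_X \times \mu_Z$, and the coupling above witnesses $\dtv(\mu_{(X,Z)}, \mu_X \times \mu_Z) \le \delta$, as required. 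The only genuine subtlety is this marginal identification: it is precisely where the hypothesis that $B$ be independent of $Y$ (and not merely of $X$) is used, and skipping it would leave open the possibility that the product distribution built from the true marginals differs from the $\mu_X \times \mu_Z$ produced by the coupling.
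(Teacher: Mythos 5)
Your proof is correct and follows essentially the same route as the paper's: the paper tensors both $\mu_{(X,Y)}$ and $\mu_X\times\mu_Y$ with $\mu_B$ (preserving the total variation bound), takes a coupling with disagreement probability at most $\delta$, and pushes forward under $(x,y,b)\mapsto(x,f(y,b))$, which is the same coupling you construct by adjoining a single shared $B$. Your explicit verification that $(X_2,Z_2)\sim\mu_X\times\mu_Z$ is a detail the paper leaves implicit, but the argument is identical in substance.
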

\begin{proof}
  Let $\mu_{(X,Y)}$ be a joint distribution of $X$ and $Y$ satisfying
  the conditions of the claim.  Then since $(X,Y)$ are
  $\delta$-independent,
  \begin{align*}
    \dtv(\mu_{(X,Y)},\mu_X\times\mu_Y) \leq \delta.
  \end{align*}
  Since $B$ is independent of both $X$ and $Y$,
  \begin{align*}
    \dtv(\mu_{(X,Y)}\times\mu_B,\mu_X\times\mu_Y\times\mu_B) \leq \delta
  \end{align*}
  and $(X,Y,B)$ are $\delta$-independent. Therefore there exists a
  coupling between $(X_1,Y_1,B_1) \sim \mu_{(X,Y)}\times\mu_B$ and
  $(X_2,Y_2,B_2) \sim \mu_X\times\mu_Y\times\mu_B$ such that
  $\P{(X_1,Y_1,B_1) \neq (X_2,Y_2,B_2)} \leq \delta$. Then
  \begin{align*}
    \P{(X_1,f(Y_1,B_1)) \neq (X_2,f(Y_2,B_2))} \leq \delta
  \end{align*}
  and the proof follows.
\end{proof}

\begin{claim}
  \label{clm:delta-ind-additive}
  Let $A=(A_1,\ldots,A_k)$, and $X$ be random variables. Let
  $(A_1,\ldots,A_k)$ be $\delta_1$-independent and let $(A,X)$ be
  $\delta_2$-independent. Then $(A_1,\ldots,A_k,X)$ are
  $(\delta_1+\delta_2)$-independent.
\end{claim}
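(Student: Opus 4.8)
The plan is to prove this by the triangle inequality for total variation distance, exactly in the spirit of the proof of Claim~\ref{clm:delta-independent}, inserting the ``half-factorized'' distribution $\mu_A\times\mu_X$ between the true joint law and the fully factorized law. Writing $A=(A_1,\ldots,A_k)$ so that $\mu_A=\mu_{(A_1,\ldots,A_k)}$, the goal is to bound $\dtv(\mu_{(A_1,\ldots,A_k,X)},\,\mu_{A_1}\times\cdots\times\mu_{A_k}\times\mu_X)$, and I would compare these two measures through the intermediate measure $\mu_A\times\mu_X$.

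First I would bound the piece that separates $A$ from $X$: since $(A,X)$ is $\delta_2$-independent, directly by definition $\dtv(\mu_{(A,X)},\,\mu_A\times\mu_X)\le\delta_2$, where $\mu_{(A,X)}=\mu_{(A_1,\ldots,A_k,X)}$. Next I would handle the piece that factorizes the $A$-coordinates, namely $\dtv(\mu_A\times\mu_X,\,(\mu_{A_1}\times\cdots\times\mu_{A_k})\times\mu_X)$. Here I would use that tensoring both measures by the common factor $\mu_X$ does not increase total variation distance. Concretely, the $\delta_1$-independence hypothesis supplies a coupling of $\mu_A$ and $\mu_{A_1}\times\cdots\times\mu_{A_k}$ with disagreement probability at most $\delta_1$; coupling the $X$-coordinate to itself via the identity produces a coupling of the two product measures whose disagreement probability is still at most $\delta_1$. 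By the coupling characterization of $\dtv$ recalled in this subsection (the minimum disagreement probability over couplings), this piece is therefore at most $\delta_1$.

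Combining the two bounds with the triangle inequality $\dtv(\mu,\nu)\le\dtv(\mu,\gamma)+\dtv(\gamma,\nu)$, taking $\gamma=\mu_A\times\mu_X$, yields $\dtv(\mu_{(A_1,\ldots,A_k,X)},\,\mu_{A_1}\times\cdots\times\mu_{A_k}\times\mu_X)\le\delta_1+\delta_2$, which is precisely the assertion that $(A_1,\ldots,A_k,X)$ are $(\delta_1+\delta_2)$-independent. The only step requiring any care is the monotonicity of $\dtv$ under tensoring with a fixed measure; everything else is bookkeeping. Since the paper already records both the triangle inequality and the minimum-disagreement characterization of $\dtv$, I do not expect a genuine obstacle here—this claim is essentially a reorganization of Claim~\ref{clm:delta-independent}, with the two error terms arising from the two distinct factorization steps rather than from an approximation of one variable by another.
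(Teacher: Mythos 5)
Your proposal is correct and follows essentially the same route as the paper: the same intermediate measure $\mu_A\times\mu_X$, the same two bounds, and the same triangle inequality. The only difference is that you explicitly justify the step $\dtv(\mu_A\times\mu_X,\mu_{A_1}\times\cdots\times\mu_{A_k}\times\mu_X)\le\delta_1$ via a coupling, whereas the paper asserts it without comment; this is a harmless (indeed welcome) addition.
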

\begin{proof}
  Let $\mu_{(A_1,\ldots,A_k,X)}$ be the joint distribution of
  $A=(A_1,\ldots,A_k)$ and $X$. Then since $(A_1,\ldots,A_k)$ are
  $\delta_1$-independent,
  \begin{align*}
    \dtv(\mu_A,\mu_{A_1}\times\cdots\times\mu_{A_k})
    \leq \delta_1.
  \end{align*}
  Hence
  \begin{align*}
    \dtv(\mu_A\times\mu_X,\mu_{A_1}\times\cdots\times\mu_{A_k}\times\mu_X)
    \leq \delta_1.
  \end{align*}

  Since $(A,X)$ are $\delta_2$-independent,
  \begin{align*}
    \dtv(\mu_{(A,X)},\mu_A\times\mu_X) \leq \delta_2.
  \end{align*}

  The claim then follows from the triangle inequality
  \begin{align*}
    \dtv(\mu_{(A,X)},\mu_{A_1}\times\cdots\times\mu_{A_k}\times\mu_X)
    \leq
    \dtv(\mu_{(A,X)},\mu_A\times\mu_X)
    +
    \dtv(\mu_A\times\mu_X,\mu_{A_1}\times\cdots\times\mu_{A_k}\times\mu_X).
  \end{align*}
\end{proof}

\begin{lemma}
  \label{cor:majority}
  For every $1/2 < p < 1$ there exist $\delta = \delta(p) >0$ and
  $\eta = \eta(p) > 0$  such that if $S$ and
  $(X_1,X_2,X_3)$ are binary random variables with $\P{S=1}=1/2$, $1/2
  < p-\eta \leq \P{X_i=S} < 1$, and $(X_1,X_2,X_3)$ are
  $\delta$-independent conditioned on $S$ then $\P{a(X_1,X_2,X_3) = S}
  > p$, where $a$ is the MAP estimator of $S$ given $(X_1,X_2,X_3)$.
\end{lemma}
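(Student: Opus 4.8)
\section*{Proof proposal}

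The plan is to exploit the fact that, by construction, the MAP rule $a$ is the optimal decision rule, so its success probability dominates that of any fixed rule. In particular $\P{a(X_1,X_2,X_3)=S}\ge\P{\Maj(X_1,X_2,X_3)=S}$, where $\Maj$ denotes majority vote; this holds because $\P{b(X)=S}=\sum_x\P{X=x}\,\CondP{S=b(x)}{X=x}$ is maximized pointwise by the MAP choice. It therefore suffices to bound the success probability of majority voting from below by a quantity strictly exceeding $p$.

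First I would treat the idealized case in which $X_1,X_2,X_3$ are genuinely independent conditioned on $S$. Writing $q_i=\P{X_i=S}$, a short computation shows that the probability that the majority of three conditionally independent bits agrees with $S$ is nondecreasing in each $q_i$: conditioning on whether $X_i=S$, the coefficient of $q_i$ equals the probability that exactly one of the other two bits is correct, which is nonnegative. Hence, since each $q_i\ge p-\eta=:q$, this probability is at least $3q^2-2q^3$ (the worst case $q_i=q$). The elementary identity $3q^2-2q^3-q=q(2q-1)(1-q)$ is strictly positive for $1/2<q<1$, so majority strictly improves on a single vote. As $g(q):=3q^2-2q^3-q$ is continuous and $g(p)>0$, I can fix $\eta=\eta(p)>0$ small enough that $3(p-\eta)^2-2(p-\eta)^3\ge p+3\eps$ for some $\eps=\eps(p)>0$; this pins down $\eta$ and leaves a margin $3\eps$ with which to absorb the dependence error.

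Next I would transfer the bound to the $\delta$-dependent setting by a coupling argument. Let $\tilde X=(\tilde X_1,\tilde X_2,\tilde X_3)$ be random variables that, conditioned on $S$, are independent with the same conditional marginals as the $X_i$. I would couple the two laws by taking the same fair bit $S=\tilde S$ and, given $S=s$, using the optimal coupling of the true conditional law of $X$ with the product of its conditional marginals; since these are $\delta$-close in total variation, this coupling has $X\neq\tilde X$ with probability at most $\delta$. On the agreement event the predicates $\{\Maj(X)=S\}$ and $\{\Maj(\tilde X)=S\}$ coincide, so $\P{\Maj(X)=S}\ge\P{\Maj(\tilde X)=S}-\delta\ge p+3\eps-\delta$. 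Choosing $\delta=\delta(p):=\eps$ then gives $\P{a(X)=S}\ge\P{\Maj(X)=S}\ge p+2\eps>p$, as required.

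I expect the main obstacle to be bookkeeping rather than conceptual: one must select $\eta$ and $\delta$ in the right order---first $\eta$ to create a strictly positive margin in the independent case, then $\delta$ strictly smaller than that margin---and one must respect the fact that $a$ is the MAP rule for the \emph{true} (dependent) law, so the only clean comparison available is the domination of $a$ over the fixed rule $\Maj$, never a direct manipulation of $a$. A minor point to verify carefully is the monotonicity of the majority success probability in the individual accuracies $q_i$, which legitimizes the reduction to the worst case $q_i=p-\eta$; I would also note that the hypothesis $q_i<1$ is not actually needed for the conclusion, since larger accuracies only increase the success probability.
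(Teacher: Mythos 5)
Your overall architecture matches the paper's: compare the MAP rule to a fixed rule, establish a strict margin in the conditionally independent case, fix $\eta$ first and then take $\delta$ smaller than the margin, and transfer by an optimal coupling. The coupling step and the order of quantifiers are fine. The gap is the point you yourself flagged as ``minor'': the success probability of majority is \emph{not} a function of the marginal accuracies $q_i=\P{X_i=S}$, and it is not bounded below by $3q^2-2q^3$. Writing $a_i=\CondP{X_i=1}{S=1}$ and $b_i=\CondP{X_i=0}{S=0}$, so that $q_i=\half(a_i+b_i)$, conditional independence gives
\begin{align*}
\P{\Maj(X)=S}=\half\Bigl(\littlesum_{i<j}a_ia_j-2a_1a_2a_3\Bigr)+\half\Bigl(\littlesum_{i<j}b_ib_j-2b_1b_2b_3\Bigr),
\end{align*}
and for $a_i=q+\delta_i$, $b_i=q-\delta_i$ this equals $3q^2-2q^3+(1-2q)\littlesum_{i<j}\delta_i\delta_j$, where the correction term is negative whenever $q>1/2$ and $\littlesum_{i<j}\delta_i\delta_j>0$. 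It can even push majority below a single bit: with $q=0.6$, $a_i=1$, $b_i=0.2$ one has $\P{X_i=S}=0.6$ for each $i$, yet $\P{\Maj(X)=S}=\half\cdot 1+\half\cdot 0.104=0.552<0.6$. Your justification (``the coefficient of $q_i$ is the probability that exactly one of the other two bits is correct'') tacitly treats the events $\{X_j=S\}$ as unconditionally independent with probabilities $q_j$; they are independent only conditionally on $S$, with conditional probabilities $a_j$ and $b_j$ that the hypothesis does not pin down.

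This asymmetry is exactly what the paper's Lemma~\ref{clm:majority} is built to handle, and it is why that lemma has a case analysis rather than a one-line majority computation. When all the $|\delta_i|$ are small it couples to symmetric bits and uses majority, as you do; but when some $\delta_i+\delta_j$ is bounded away from zero it abandons majority and compares the MAP rule to the product $X_iX_j$, whose success probability is $p+\half(2p-1)(\delta_i+\delta_j)$, which then exceeds $p$ by a definite margin. To repair your proof you need this second comparison rule (or an equivalent device) for the asymmetric case; once the independent case yields a uniform margin $\eps_{p-\eta}>0$ over all admissible conditional laws, the remainder of your argument---MAP dominates any fixed rule, then the $\delta$-coupling---goes through unchanged.
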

In other words, one's odds of guessing $S$ using three conditionally
almost-independent bits are greater than using a single bit.
\begin{proof}
  We apply Lemma~\ref{clm:majority} below to three conditionally
  independent bits which are each equal to $S$ w.p.\ at least
  $p-\eta$. Then
  \begin{align*}
    \P{a(X_1,X_2,X_3) = S} \geq p-\eta+\eps_{p-\eta}
  \end{align*}
  where $\eps_q=\frac{1}{100}(2q-1)(3q^2-2q^3-q)$.

  Since $\eps_q$ is continuous in $q$ and positive for $1/2<q<1$, it
  follows that for $\eta$ small enough $p-\eta+\eps_{p-\eta} >
  p$. Now, take $\delta < \eps_{p-\eta}-\eta$. Then, since we can
  couple $\delta$-independent bits to independent bits so that they
  differ with probability at most $\delta$, the claim follows.
\end{proof}

\begin{lemma}
  \label{clm:majority}
  Let $S$ and $(X_1,X_2,X_3)$ be binary random variables such that
  $\P{S=1}=1/2$. Let $1/2 < p \leq \P{X_i=S} < 1$. Let
  $a(X_1,X_2,X_3)$ be the MAP estimator of $S$ given
  $(X_1,X_2,X_3)$. Then there exists an $\eps_p>0$ that depends only on
  $p$ such that if $(X_1,X_2,X_3)$ are independent conditioned on $S$
  then $\P{a(X_1,X_2,X_3) = S} \geq p + \eps_p$.

  In particular the statement holds with
  \begin{align*}
  \eps_p=\frac{1}{100}(2p-1)(3p^2-2p^3-p).
  \end{align*}
\end{lemma}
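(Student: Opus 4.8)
The plan is to lean on the optimality of the MAP estimator and reduce the statement to a clean inequality measuring how much the second and third conditionally independent bits improve on a single bit. Since the prior $\P{S=1}=1/2$ is uniform, the MAP rule $a$ is Bayes optimal for $0$--$1$ loss, so for \emph{any} estimator $g=g(X_1,X_2,X_3)$ we have $\P{a=S}\ge\P{g=S}$. Writing $P$ and $Q$ for the conditional laws of $(X_1,X_2,X_3)$ given $S=1$ and $S=0$, conditional independence makes these product measures $P=\bigotimes_i\alpha_i$, $Q=\bigotimes_i\beta_i$ on $\{0,1\}^3$, with $\alpha_i(1)=\P{X_i=1\mid S=1}=:a_i$ and $\beta_i(1)=:c_i$; the hypothesis $\P{X_i=S}\ge p$ is exactly $q_i:=\half(a_i+(1-c_i))=\half+\half(a_i-c_i)\ge p$, i.e.\ $a_i-c_i\ge 2p-1$. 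The Bayes success probability is $\P{a=S}=\half\sum_x\max(P(x),Q(x))$, and equivalently $\P{a=S}=\half+\half\,\dtv(P,Q)$, so the task is a lower bound on the total variation between two binary product measures.

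Relabel so that bit $1$ is a weakest bit, $q_1=\min_i q_i\ge p$; then $\P{a=S}\ge q_1\ge p$ already, and the whole content is to extract the additional $\eps_p$. I would do this through the exact identity obtained by comparing the MAP rule to the rule ``predict $S=X_1$'': subtracting the two success probabilities and splitting on the value of $x_1$ yields
\[
  \P{a=S}-q_1=\half\Big(\textstyle\sum_{x:\,x_1=1}(Q(x)-P(x))_+ +\sum_{x:\,x_1=0}(P(x)-Q(x))_+\Big),
\]
the (manifestly nonnegative) mass that MAP ``rescues'' from the single-bit rule. In the symmetric case $a_i=q,\ c_i=1-q$ only the corners $(0,1,1)$ and $(1,0,0)$ survive and this evaluates to $q(1-q)(2q-1)=3p^2-2p^3-p$, exactly the majority-vote gain; this is where the shape of $\eps_p$ comes from, and here MAP coincides with $\Maj$, so $\P{a=S}\ge\P{\Maj=S}$ suffices.

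The main obstacle is the asymmetric case $a_i+c_i\ne1$, where two tempting shortcuts fail: majority voting is \emph{not} a valid fallback (for large base rates a ``$1$'' is weak evidence and one checks $\P{\Maj=S}$ can drop below $p$), and the symmetric configuration is \emph{not} the worst case ($\P{a=S}$ can dip slightly below $3p^2-2p^3$). Both facts say that only the genuine likelihood-weighted MAP rule is guaranteed to beat $p$, so the rescued mass must be bounded directly. I would keep just the two corner terms, $\half\big[((1-a_1)a_2a_3-(1-c_1)c_2c_3)_+ + (c_1(1-c_2)(1-c_3)-a_1(1-a_2)(1-a_3))_+\big]$, and argue by a dichotomy: if bit $1$ has large likelihood ratios in \emph{both} directions then $a_1-c_1$, hence $q_1$, is bounded away from $p$ and $\P{a=S}\ge q_1$ already supplies the slack; otherwise bit $1$ is weak in some direction, and since bits $2,3$ are at least as informative (bit $1$ being weakest) they override it at the corresponding corner, making one of the two terms bounded below by a positive multiple of $(2p-1)^2\!\left(1-(2p-1)^2\right)$, the size of $\eps_p$. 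The quantitative version of this override estimate, uniform over all admissible $a_i,c_i$, is the crux of the argument, and the constant $\tfrac1{100}$ together with the extra factor $(2p-1)$ in $\eps_p$ are precisely the slack that lets a crude but uniform bound here go through. I expect the extremal configuration to be identical bits tuned to the MAP threshold transition $a^2(1-a)=c^2(1-c)$, reducing the verification to a one-variable inequality that the generous constant comfortably absorbs.
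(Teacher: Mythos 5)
Your setup is sound (Bayes optimality of the MAP rule, the identity $\P{a=S}=\half+\half\,\dtv(P,Q)$, and the rescued-mass decomposition relative to the single-bit rule), and you correctly diagnose the two traps: majority is not a valid fallback for asymmetric bits, and the symmetric configuration is not extremal. But the proof has a genuine gap precisely where you say the crux lies: the uniform lower bound on the rescued mass in the asymmetric case is never established. Two specific problems. First, the dichotomy is not well-posed: ``bit $1$ has large likelihood ratios in both directions'' does not force $q_1$ to exceed $p$ by a function of $p$ alone (one can have $a_1-c_1=2p-1$ exactly, so $q_1=p$, while both likelihood ratios are bounded away from $1$), so the ``easy'' branch does not actually deliver slack. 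Second, you retain only the two corners $(0,1,1)$ and $(1,0,0)$; showing that at least one of these two terms is uniformly at least a positive multiple of $(2p-1)^2\left(1-(2p-1)^2\right)$ over all six parameters $(a_i,c_i)$ subject to $a_i-c_i\ge 2p-1$ is a genuine multi-parameter optimization, which you defer to a final ``one-variable verification'' without justification. As written, the argument does not close.

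The paper avoids this optimization with a more elementary device that you could graft directly onto your approach. Writing $\CondP{X_i=1}{S=1}=p+\delta_i$ and $\CondP{X_i=0}{S=0}=p-\delta_i$ (possible when $\P{X_i=S}=p$), it splits into two cases: if all $|\delta_i|\le\eta_p/6$ with $\eta_p=3p^2-2p^3-p$, a coupling to the fully symmetric case lets majority win by $\eta_p/2$; otherwise a short pigeonhole argument produces a pair with $|\delta_i+\delta_j|\ge\eta_p/12$, and the \emph{product} (AND) estimator satisfies the exact identity $\P{X_iX_j=S}=p+\half(2p-1)(\delta_i+\delta_j)$, hence beats $p$ by $(2p-1)\eta_p/12$; Bayes optimality then transfers the gain to $a$, and a final coupling handles the case $\P{X_i=S}>p$. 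This closed-form two-bit estimator is exactly the ingredient your corner analysis is trying to reproduce by hand; substituting it for the corner bound makes the uniform estimate immediate.
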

\begin{proof}
  Denote $X=(X_1,X_2,X_3)$.

  Assume first that $\P{X_i=S} = p$ for all $i$.  Let $\delta_1,
  \delta_2, \delta_3$ be such that $p+\delta_i = \CondP{X_i=1}{S=1}$
  and $p-\delta_i=\CondP{X_i=0}{S=0}$.

  To show that $\P{a(X) = S} \geq p + \eps_p$ it is enough
  to show that $\P{b(X) = S} \geq p + \eps_p$ for some
  estimator $b$, by the definition of a MAP estimator. We separate
  into three cases.
  \begin{enumerate}
  \item If $\delta_1 = \delta_2 = \delta_3 = 0$ then the events
    $X_i=S$ are independent and the majority of the $X_i$'s is equal
    to $S$ with probability $p' = p^3+3p^2(1-p)$, which is greater than $p$
    for $\half < p < 1$. Denote $\eta_p = p' - p$. Then
    $\P{a(X) = S} \geq p + \eta_p$.

  \item Otherwise if $|\delta_i| \leq \eta_p / 6$ for all $i$ then we
    can couple $X$ to three bits $Y=(Y_1,Y_2,Y_3)$ which satisfy the
    conditions of case 1 above, and so that $\P{X \neq Y} \leq
    \eta_p/2$. Then $\P{a(X) = S} \geq p + \eta_p/2$.

  \item Otherwise we claim that there exist $i$ and $j$ such that
    $|\delta_i+\delta_j| > \eta_p/12$.

    Indeed assume w.l.o.g.\ that $\delta_1 \geq \eta_p/6$. Then if it
    doesn't hold that $\delta_1+\delta_2 \geq \eta_p/12$ and it
    doesn't hold that $\delta_1+\delta_3 \geq \eta_p/12$ then
    $\delta_2 \leq -\eta_p/12$ and $\delta_3 \leq -\eta_p/12$ and
    therefore $\delta_2+\delta_3 \leq -\eta_p/12$.

    Now that this claim is proved, assume w.l.o.g.\ that
    $\delta_1+\delta_2 \geq \eta_p/12$. Recall that $X_i \in \{0,1\}$,
    and so the product $X_1X_2$ is also an element of $\{0,1\}$. Then
  \begin{align*}
    \P{X_1X_2 = S} &= \half\CondP{X_1X_2 = 1}{S=1}+\half\CondP{X_1X_2 =
      0}{S=0}\\
    &= \half\left((p+\delta_1)(p+\delta_2)+(p-\delta_1)(p-\delta_2)+(p-\delta_1)(1-p+\delta_2)+(1-p+\delta_1)(p-\delta_2)\right)\\
    &= p+\half(2p-1)(\delta_1+\delta_2)\\
    &\geq p + (2p-1)\eta_p/12,
  \end{align*}
  and so $\P{a(X) = S} \geq p + (2p-1)\eta_p/12$.
  \end{enumerate}

  Finally, we need to consider the case that $\P{X_i=S} = p_i > p$ for
  some $i$. We again consider two cases. Denote $\eps_p =
  (2p-1)\eta_p/100$. If there exists an $i$ such that $p_i > \eps_p$
  then this bit is by itself an estimator that equals $S$ with
  probability at least $p+\eps_p$, and therefore the MAP
  estimator equals $S$ with probability at least $p+\eps_p$.

  Otherwise $p \leq p_i \leq p_i+\eps_p$ for all $i$.  We will
  construct a coupling between the distributions of $X=(X_1,X_2,X_3)$
  and $Y=(Y_1,Y_2,Y_3)$ such that the $Y_i$'s are conditionally
  independent given $S$ and $\P{Y_i=S} = p$ for all $i$, and
  furthermore $\P{Y \neq X} \leq 3\eps_p$. By what we've proved so far
  the MAP estimator of $S$ given $Y$ equals $S$ with probability at
  least $p+(2p-1)\eta_p/12 \geq p+8\eps_p$. Hence by the coupling, the
  same estimator applied to $X$ is equal to $S$ with probability at
  least $p+8\eps_p-3\eps_p>p+\eps_p$.

  To couple $X$ and $Y$ let $Z_i$ be a real i.i.d.\ random variables
  uniform on $[0,1]$. When $S=1$ let $X_i=Y_i=S$ if $Z_i >
  p_i+\delta_i$, let $X_i=S$ and $Y_i=1-S$ if $Z_i \in
  [p+\delta_i,p_i+\delta_i]$, and otherwise $X_i=Y_i=1-S$. The
  construction for $S=0$ is similar. It is clear that $X$ and $Y$ have the
  required distribution, and that furthermore $\P{X_i \neq Y_i} =
  p_i-p \leq \eps_p$. Hence $\P{X \neq Y} \leq 3\eps_p$, as needed.

\end{proof}

\subsection{Asymptotic learning}
\label{sec:learning}

In this section we prove Theorem~\ref{thm:bounded-learning}.
\begin{theorem*}[\ref{thm:bounded-learning}]
  Let $\mu_0,\mu_1$ be such that for every connected, undirected graph
  $G$ there exists a random variable $\limac$ such that almost surely
  $\limac_u=\limac$ for all $u \in V$.  Then there exists a sequence
  $q(n)=q(n, \mu_0, \mu_1)$ such that $q(n) \to 1$ as $n \to \infty$,
  and $\P{\limac = \{S\}} \geq q(n)$, for any choice of undirected,
  connected graph $G$ with $n$ agents.
\end{theorem*}

To prove this theorem we will need a number of intermediate results,
which are given over the next few subsections.

\subsubsection{Estimating the limiting optimal action set $\limac$}
We would like to show that although the agents have a common optimal
action set $\limac$ only at the limit $t \to \infty$, they can estimate
this set well at a large enough time $t$.

The action $\action_u(t)$ is agent $u$'s MAP estimator of $S$ at time
$t$ (see Remark~\ref{rem:map}). We likewise define $\estL_u(t)$ to be
agent $u$'s MAP estimator of $\limac$, at time $t$:
\begin{align}
  \label{eq:l-i-t}
  \estL_u(t) = \argmax_{\estL \in \{\{0\},\{1\},\{0,1\}\}}\CondP{\limac = \estL}{\cF_u(t)}.
\end{align}
We show that the sequence of random variables $\estL_u(t)$ converges
to $\limac$ for every $u$, or that alternatively $\estL_u(t)=\limac$ for each
agent $u$ and $t$ large enough:
\begin{lemma}
  \label{lemma:L-estimate}
  $\P{\lim_{t \to \infty}\estL_u(t) = \limac} = 1$ for all $u \in V$.
\end{lemma}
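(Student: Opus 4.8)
The plan is to show that the common limiting action set $\limac$ is measurable with respect to the limiting information $\cF_u$ of each agent, and then to apply the martingale convergence theorem to each of the three conditional probabilities appearing in the definition~\eqref{eq:l-i-t} of $\estL_u(t)$.

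Fix an agent $u$. First I would observe that $\limac$ is $\cF_u$-measurable. By the hypothesis of Theorem~\ref{thm:bounded-learning} we have $\limac_u = \limac$ almost surely, and by definition $\limac_u$ is a deterministic (Borel) function of the limiting belief $\belief_u = \CondP{S=1}{\cF_u}$, which is $\cF_u$-measurable. Hence $\limac$ agrees almost surely with an $\cF_u$-measurable random variable taking values in the finite set $\{\{0\},\{1\},\{0,1\}\}$.

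Next, fix a value $\estL \in \{\{0\},\{1\},\{0,1\}\}$ and consider the process $t \mapsto \CondP{\limac = \estL}{\cF_u(t)}$. Since the $\cF_u(t)$ form an increasing filtration with $\cF_u = \cup_t \cF_u(t)$, this is a bounded martingale, so by L\'evy's upward convergence theorem it converges almost surely, as $t \to \infty$, to $\CondP{\limac = \estL}{\cF_u}$. Because $\limac$ is $\cF_u$-measurable, this limit is simply the indicator $\ind{\limac = \estL}$. Applying this to each of the three possible values of $\estL$ simultaneously, we conclude that, almost surely, $\CondP{\limac = \estL}{\cF_u(t)} \to 1$ when $\estL = \limac$ and $\CondP{\limac = \estL}{\cF_u(t)} \to 0$ for the other two values of $\estL$.

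Finally I would unwind the argmax. On the almost-sure event just described there is a random time $T$ such that, for all $t \geq T$, the conditional probability of the true value $\limac$ strictly exceeds those of the other two values; in particular the maximizer in~\eqref{eq:l-i-t} is unique and equal to $\limac$, so the outcome does not depend on the tie-breaking rule. Hence $\estL_u(t) = \limac$ for all $t \geq T$, giving $\lim_{t\to\infty}\estL_u(t) = \limac$ almost surely, and since $u$ was arbitrary the claim holds for all $u \in V$. The only real content lies in the first step: once $\limac$ is known to be determined by each agent's limiting information $\cF_u$, the remainder is a routine application of martingale convergence together with the definition of the MAP estimator. The main (if minor) obstacle is therefore to justify carefully that the \emph{common} limiting optimal action set is $\cF_u$-measurable for each individual agent, which is exactly what the agreement hypothesis $\limac_u = \limac$ supplies.
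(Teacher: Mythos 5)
Your proposal is correct and follows essentially the same route as the paper, which likewise establishes that $\limac$ is measurable in $\cF_u$ (via $\limac_u=\limac$ and the $\cF_u$-measurability of $\belief_u$) and then applies martingale convergence to each $\CondP{\limac=\estL}{\cF_u(t)}$, obtaining indicators in the limit so that the argmax stabilizes; the paper merely packages this as the general Lemma~\ref{lemma:estimation-converges}. No gaps.
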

This lemma (\ref{lemma:L-estimate}) follows by direct application of
the more general Lemma~\ref{lemma:estimation-converges} which we prove
below.  Note that a consequence is that $\lim_{t \to \infty}
\P{\estL_u(t) = \limac} = 1$.

\begin{lemma}
  \label{lemma:estimation-converges}
  Let $\cK_1 \subseteq \cK_2, \ldots$ be a filtration of
  $\sigma$-algebras, and let $\cK_\infty = \cup_t\cK_t$. Let $K$ be a
  random variable that takes a finite number of values and is
  measurable in $\cK_\infty$. Let $M(t)=\argmax_k\CondP{K =
    k}{\cK(t)}$ be the MAP estimator of $K$ given $\cK_t$. Then
  \begin{align*}
    \P{\lim_{t \to \infty} M(t)=K} = 1.
  \end{align*}
\end{lemma}
\begin{proof}
  For each $k$ in the support of $K$, $\CondP{K=k}{\cK_t}$ is a
  bounded martingale which converges almost surely to
  $\CondP{K=k}{\cK_\infty}$, which is equal to $\ind{K=k}$, since $K$
  is measurable in $G_\infty$.  Therefore
  $M(t)=\argmax_k\CondP{K=k}{\cK_t}$ converges almost surely to
  $\argmax_k\CondP{K=k}{\cK_\infty}=K$.
\end{proof}

We would like at this point to provide the reader with some more
intuition on $\action_u(t)$, $\estL_u(t)$ and the difference between
them. Assuming that $\limac = \{1\}$ then by definition, from some
time $t_0$ on, $\action_u(t)=1$, and from
Lemma~\ref{lemma:L-estimate}, $\estL_u(t)=\{1\}$. The same applies
when $\limac = \{0\}$. However, when $\limac = \{0,1\}$ then
$\action_u(t)$ may take both values 0 and 1 infinitely often, but
$\estL_u(t)$ will eventually equal $\{0,1\}$. That is, agent $u$ will
realize at some point that, although it thinks at the moment that 1 is
preferable to 0 (for example), it is in fact the most likely outcome
that its belief will converge to $1/2$. In this case, although it is
not optimal, a {\em uniformly random} guess of which is the best
action may not be so bad. Our next definition is based on this
observation.

Based on $\estL_u(t)$, we define a second ``action'' $C_u(t)$.
\begin{definition}
  Let $C_u(t)$ be picked uniformly from $\estL_u(t)$: if $\estL_u(t) =
  \{1\}$ then $C_u(t) = 1$, if $\estL_u(t) = \{0\}$ then $C_u(t) = 0$,
  and if $\estL_u(t) = \{0,1\}$ then $C_u(t)$ is picked independently
  from the uniform distribution over $\{0,1\}$.
\end{definition}

Note that we here extend our probability space by including in
$I_u(t)$ (the observations of agent $u$ up to time $t$) an extra
uniform bit that is independent of all else and $S$ in
particular. Hence this does not increase $u$'s ability to estimate
$S$, and if we can show that in this setting $u$ learns $S$ then $u$
can also learn $S$ without this bit.  In fact, we show that
asymptotically it is as good an estimate for $S$ as the best estimate
$\action_u(t)$:
\begin{claim}
  \label{claim:b-a-equiv}
  $\lim_{t \to \infty} \P{C_u(t)=S} = \lim_{t \to \infty} \P{\action_u(t)=S}
  = p$ for all $u$.
\end{claim}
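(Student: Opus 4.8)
The plan is to reduce both limits to a single expectation by conditioning on $\cF_u(t)$ and then invoking martingale convergence together with Lemma~\ref{lemma:L-estimate}. Since $\action_u(t)$ is the MAP estimator of $S$ given $\cF_u(t)$, conditioning gives
\[
  \CondP{\action_u(t)=S}{\cF_u(t)} = \max(\belief_u(t),\,1-\belief_u(t)),
\]
because the estimator selects whichever of $S=0,1$ is the more likely, and that choice is correct with conditional probability equal to the larger of the two posterior masses. As $\belief_u(t)$ is a bounded martingale converging almost surely to $\belief_u$, the right-hand side converges a.s.\ to $\max(\belief_u,\,1-\belief_u)$, and the bounded convergence theorem then yields $\lim_{t}\P{\action_u(t)=S}=\E{\max(\belief_u,\,1-\belief_u)}$, which by Definition~\ref{def:p-u} is $p$.

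For $C_u(t)$ I would compute the analogous conditional probability. Since $C_u(t)$ is determined by $\estL_u(t)$ together with an auxiliary bit that is independent of $S$, conditioning on $\cF_u(t)$ gives $\belief_u(t)$ when $\estL_u(t)=\{1\}$, $1-\belief_u(t)$ when $\estL_u(t)=\{0\}$, and exactly $1/2$ when $\estL_u(t)=\{0,1\}$ (in the last case the fair coin is independent of $S$). The key step is to let $t\to\infty$ and show this quantity converges a.s.\ to the same limit $\max(\belief_u,\,1-\belief_u)$. I would split on the value of the common limit $\limac$, using that $\estL_u(t)\to\limac$ a.s.\ by Lemma~\ref{lemma:L-estimate}. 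On $\{\limac=\{1\}\}$ we have $\belief_u>1/2$ and eventually $\estL_u(t)=\{1\}$, so the conditional probability equals $\belief_u(t)\to\belief_u=\max(\belief_u,\,1-\belief_u)$; the case $\{\limac=\{0\}\}$ is symmetric; and on $\{\limac=\{0,1\}\}$ we have $\belief_u=1/2$ and eventually $\estL_u(t)=\{0,1\}$, so the conditional probability is eventually $1/2=\max(\belief_u,\,1-\belief_u)$. These events partition the space up to a null set, so the conditional match probability of $C_u(t)$ converges a.s.\ to $\max(\belief_u,\,1-\belief_u)$, and bounded convergence again gives $\lim_{t}\P{C_u(t)=S}=\E{\max(\belief_u,\,1-\belief_u)}=p$.

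The main subtlety to watch is the event $\{\limac=\{0,1\}\}$. There $\action_u(t)$ itself may oscillate and fail to converge, so one cannot hope to argue that $C_u(t)$ and $\action_u(t)$ eventually agree pointwise. The resolution is to work entirely at the level of conditional match probabilities rather than the actions: on this event the fair coin $C_u(t)$ matches $S$ with conditional probability $1/2$, and because $\belief_u=1/2$ there, this equals $\max(\belief_u,\,1-\belief_u)$, so $C_u(t)$ is asymptotically just as good as the optimal $\action_u(t)$. The essential input that makes the argument go through is that $\estL_u(t)$, unlike $\action_u(t)$, does converge (Lemma~\ref{lemma:L-estimate}), which is exactly why introducing $C_u(t)$ is useful.
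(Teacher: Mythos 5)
Your proof is correct and follows essentially the same route as the paper's: both split on whether $\limac=\{0,1\}$, invoke Lemma~\ref{lemma:L-estimate} to get $\estL_u(t)\to\limac$, and observe that in the tie case both match probabilities tend to $1/2$. The only (harmless) difference is one of bookkeeping: you work throughout with $\CondP{\cdot}{\cF_u(t)}$ and bounded convergence, identifying the common limit as $\E{\max(\belief_u,1-\belief_u)}$, whereas the paper argues on the non-tie event that $C_u(t)=\action_u(t)$ pointwise for $t$ large.
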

\begin{proof}
  We prove the claim by showing that it holds both when conditioning
  on the event $\limac = \{0,1\}$ and when conditioning on its
  complement.

  When $\limac \neq \{0,1\}$ then for $t$ large enough
  $\limac=\{\action_u(t)\}$. Since (by Lemma~\ref{lemma:L-estimate})
  $\lim \estL_u(t) = \limac$ with probability 1, in this case
  $C_u(t)=\action_u(t)$ for $t$ large enough, and
  \begin{align*}
    \lim_{t \to \infty}\CondP{C_u(t)=S}{\limac \neq \{0,1\}}\;=\;
    \CondP{\limac=\{S\}}{\limac \neq \{0,1\}} \;=\; \lim_{t \to
      \infty}\CondP{\action_u(t)=S}{\limac \neq \{0,1\}}.
  \end{align*}

  When $\limac = \{0,1\}$ then $\lim \belief_u(t) = \lim
  \CondP{\action_u(t)=S}{\cF_u(t)} = 1/2$ and so $\lim \P{\action_u(t)=S} =
  1/2$. This is again also true for $C_u(t)$, since in this case it is
  picked at random for $t$ large enough, and so
  \begin{align*}
    \lim_{t \to \infty}\CondP{C_u(t)=S}{\limac = \{0,1\}}\;=\; \frac{1}{2} \;=\;
    \lim_{t \to \infty}\CondP{\action_u(t)=S}{\limac = \{0,1\}}.
  \end{align*}
\end{proof}

\subsubsection{The probability of getting it right}
Recall Definition~\ref{def:p-u}: $p_u(t) = \P{\action_u(t)=S}$ and
$p_u = \lim_{t \to \infty}p_u(t)$ (i.e., $p_u(t)$ is the probability
that agent $u$ takes the right action at time $t$). We prove here a
few easy related claims that will later be useful to us.
\begin{claim}
  \label{clm:pMonotone}
  $p_u(t+1) \geq p_u(t)$.
\end{claim}
\begin{proof}
  Condition on $\cF_u(t+1)$, the information available to agent $u$ at
  time $t+1$. Hence the probability that $\action_u(t+1) = S$ is at
  least as high as the probability $\action_u(t) = S$, since
  \begin{align*}
    \action_u(t+1)=\argmax_s\CondP{S=s}{\cF(t+1)}
  \end{align*}
  and $\action_u(t)$ is measurable in $\cF(t+1)$.  The claim is proved
  by integrating over all possible values of $\cF_u(t+1)$.
\end{proof}
Since $p_u(t)$ is bounded by one, Claim~\ref{clm:pMonotone} means that
the limit $p_u$ exists. We show that this value is the same for all
vertices.
\begin{claim}
  \label{clm:pG}
  There exists a $p \in [0,1]$ such that $p_u=p$ for all $u$.
\end{claim}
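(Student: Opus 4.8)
The plan is to show that the map $u \mapsto p_u$ is monotone along directed edges, and then to invoke strong connectivity to conclude that it is constant. The heart of the argument is the observation that an agent always does at least as well asymptotically as any neighbor it observes, since it could simply copy that neighbor's past action. Concretely, I claim that if $w \in N(u)$ (so that $u$ observes $w$), then $p_u \geq p_w$. To see this, recall from Definition~\ref{def:revealed-actions} that
\[
  \cF_u(t+1) = \sigma\big(\psignal_u,\{\action_v(t') : v \in N(u),\, t' \leq t\}\big),
\]
so that in particular $\action_w(t)$ is measurable with respect to $\cF_u(t+1)$. By Remark~\ref{rem:map}, $\action_u(t+1)$ is a MAP estimator of $S$ given $\cF_u(t+1)$, and therefore maximizes $\P{A = S}$ over all $\cF_u(t+1)$-measurable estimators $A$. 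Applying this optimality to the particular estimator $A = \action_w(t)$ gives
\[
  p_u(t+1) = \P{\action_u(t+1) = S} \;\geq\; \P{\action_w(t) = S} = p_w(t).
\]
Letting $t \to \infty$, and using that both limits exist by Claim~\ref{clm:pMonotone}, yields $p_u \geq p_w$.

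Next I would extend this inequality along directed paths. If there is a directed path $u = x_0 \to x_1 \to \cdots \to x_k = v$ in $G$, then each consecutive pair satisfies $x_{i+1} \in N(x_i)$, so the edge inequality chains to give $p_u = p_{x_0} \geq p_{x_1} \geq \cdots \geq p_{x_k} = p_v$. Finally, since $G$ is strongly connected, for any two agents $u$ and $v$ there is both a directed path from $u$ to $v$ and a directed path from $v$ to $u$. Applying the path inequality in both directions gives $p_u \geq p_v$ and $p_v \geq p_u$, so $p_u = p_v$. Since $u$ and $v$ were arbitrary, all the $p_u$ are equal to a common value $p \in [0,1]$, which completes the proof.

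I do not expect a serious obstacle here: this is a short monotonicity-plus-connectivity argument, and the optimality of the MAP estimator does all the real work. The one point that requires care is the direction of the observation relation. One must use the convention that $w \in N(u)$ means that $u$ observes $w$, and hence that $u$ has access to $w$'s past actions, so that the \emph{observer's} success probability dominates the \emph{observed} agent's, and not the reverse. Getting this direction right is what makes the two chains of inequalities pinch $p_u$ and $p_v$ together under strong connectivity.
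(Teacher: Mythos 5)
Your proof is correct and follows essentially the same route as the paper: both hinge on the fact that $\action_w(t)$ is measurable in $\cF_u(t+1)$, so MAP optimality gives $p_u(t+1)\geq p_w(t)$, and then connectivity forces equality. If anything, your chaining of the inequality along directed paths in both directions is slightly more explicit about the directed case than the paper's brief appeal to reversing the roles of neighbors and induction.
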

\begin{proof}
  Let $u$ and $w$ be neighbors. As in the proof above, we can argue
  that $\CondP{\action_u(t+1) = S} {\cF_u(t+1)} \geq
  \CondP{\action_w(t) = S} {\cF_u(t+1)}$, since $\action_w(t)$ is
  measurable in $\cF_u(t+1)$. Hence the same holds unconditioned, and
  so we have that $p_u \geq p_w$, by taking the limit $t \to
  \infty$. Since the same argument can be used with the roles of $u$
  and $w$ reversed, we have that $p_u=p_w$, and the claim follows from
  the connectedness of the graph, by induction.
\end{proof}

We make the following definition in the spirit of these claims:
\begin{definition}
  $p = \lim_{t \to \infty} \P{\action_u(t)=S}$.
\end{definition}
In the context of a specific social network graph $G$ we may denote
this quantity as $p(G)$.

For time $t=1$ the next standard claim follows from the fact that the
agents' signals are informative.
\begin{claim}
  \label{clm:pGreaterThanHalf}
  $p_u(t)>1/2$ for all $u$ and $t$.
\end{claim}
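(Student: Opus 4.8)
The plan is to reduce to the case $t=1$ by monotonicity and then exploit that the private signals are genuinely informative, i.e.\ that $\mu_0 \neq \mu_1$.

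First I would express $p_u(t)$ in terms of the belief. Since $\action_u(t)$ is $\cF_u(t)$-measurable and is the MAP estimator of $S$, conditioning on $\cF_u(t)$ gives
\begin{align*}
  \CondP{\action_u(t) = S}{\cF_u(t)} = \max\{\belief_u(t), 1 - \belief_u(t)\} \geq \half,
\end{align*}
with equality precisely on the event $\belief_u(t) = \half$. Taking expectations,
\begin{align*}
  p_u(t) = \E{\max\{\belief_u(t), 1 - \belief_u(t)\}} \geq \half,
\end{align*}
and this inequality is strict if and only if $\P{\belief_u(t) \neq \half} > 0$. By Claim~\ref{clm:pMonotone} the sequence $p_u(t)$ is nondecreasing in $t$, so it suffices to establish $\P{\belief_u(1) \neq \half} > 0$; this yields $p_u(1) > \half$ and hence $p_u(t) > \half$ for every $t$.

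Next I would unpack the initial belief. Recall $\belief_u(1) = \CondP{S=1}{\psignal_u}$, equivalently $\llr_u(1) = \log \tfrac{d\mu_1}{d\mu_0}(\psignal_u)$, so that $\belief_u(1) = \half$ holds exactly when $\tfrac{d\mu_1}{d\mu_0}(\psignal_u) = 1$. Suppose toward a contradiction that $\belief_u(1) = \half$ almost surely. Since $S=0$ occurs with probability $\half$ and, conditioned on $S=0$, the signal $\psignal_u$ is distributed $\mu_0$, this forces $\tfrac{d\mu_1}{d\mu_0} = 1$ to hold $\mu_0$-almost everywhere. But then for every measurable set $A$ we would have $\mu_1(A) = \int_A \tfrac{d\mu_1}{d\mu_0}\,d\mu_0 = \mu_0(A)$, so $\mu_1 = \mu_0$, contradicting the standing assumption $\mu_0 \neq \mu_1$. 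Hence $\P{\belief_u(1) \neq \half} > 0$.

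Combining the two steps gives $p_u(1) > \half$, and Claim~\ref{clm:pMonotone} lifts this to $p_u(t) > \half$ for all $t$. I do not expect a genuine obstacle here; the only point that deserves care is the passage from $\mu_0 \neq \mu_1$ to the positive-probability statement about the likelihood ratio, which is exactly where the mutual absolute continuity of $\mu_0$ and $\mu_1$ is used to guarantee a well-defined Radon--Nikodym derivative.
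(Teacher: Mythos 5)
Your proof is correct and follows essentially the same route as the paper: reduce to $t=1$ via Claim~\ref{clm:pMonotone} and write $p_u(1)=\E{\max\{\belief_u(1),1-\belief_u(1)\}}$. The only difference is the last step, where the paper uses $\max\{a,b\}=\half(a+b)+\half|a-b|$ to obtain the quantitative identity $p_u(1)=\half+\half D_{TV}(\mu_0,\mu_1)$, while you argue qualitatively by contradiction that $\tfrac{d\mu_1}{d\mu_0}=1$ $\mu_0$-a.e.\ would force $\mu_0=\mu_1$; both are valid, the paper's version just yields an explicit lower bound.
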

\begin{proof}
  Note that
  \begin{align*}
    \CondP{\action_u(1)=S}{\psignal_u} =
    \max\{\belief_u(1),1-\belief_u(1)\} =
    \max\{\CondP{S=0}{\psignal_u}, \CondP{S=1}{\psignal_u}\}.
  \end{align*}
  Recall that $p_u(1) = \P{\action_u(1) = S}$. Hence
  \begin{align*}
    p_u(1) &= \E{\CondP{\action_u(1)=S}{\psignal_u}}\\
    &= \E{\max\{\CondP{S=0}{\psignal_u}, \CondP{S=1}{\psignal_u}\}}\\
  \end{align*}
  Since $\max\{a,b\}=\half(a+b)+\half|a-b|$, and since
  $\CondP{S=0}{\psignal_u}+ \CondP{S=1}{\psignal_u}=1$, it follows that
  \begin{align*}
    p_u(1) &=
    \half+\half\E{|\CondP{S=0}{\psignal_u}-\CondP{S=1}{\psignal_u}|}\\
    &= \half+\half D_{TV}(\mu_0,\mu_1),
  \end{align*}
  where the last equality follows by Bayes' rule.  Since $\mu_0 \neq
  \mu_1$, the total variation distance $D_{TV}(\mu_0,\mu_1)>0$ and
  $p_u(1) > \half$. For $t>1$ the claim follows from
  Claim~\ref{clm:pMonotone} above.
\end{proof}

Recall that $|N(u)|$ is the out-degree of $u$, or the number of
neighbors that $u$ observes. The next lemma states that an agent with
many neighbors will have a good estimate of $S$ already at the second
round, after observing the first action of its neighbors. This lemma
is adapted from Mossel and Tamuz~\cite{MosselTamuz10B:arxiv}, and
provided here for completeness.
\begin{lemma}
  \label{thm:large-out-deg}
  There exist constants $C_1=C_1(\mu_0,\mu_1)$ and
  $C_2=C_2(\mu_0,\mu_1)$ such that for any agent $u$ it holds that
  \begin{align*}
    p_u(2) \geq 1-C_1 e^{-C_2 \cdot N(u)}.
  \end{align*}
\end{lemma}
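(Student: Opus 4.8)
The plan is to show that an agent $u$ observing many neighbors can, at time $t=2$, aggregate their first-round actions into a highly accurate estimate of $S$. The key observation is that the first actions $\action_w(1)$ of the neighbors $w \in N(u)$ are mutually independent conditioned on $S$, since each $\action_w(1)$ is a deterministic function of the private signal $\psignal_w$ alone (by Claim~\ref{clm:a-from_cf} with $t=1$, $I_w(1)$ is empty, so $\action_w(1)$ depends only on $\psignal_w$), and the private signals are i.i.d.\ conditioned on $S$. Moreover, by Claim~\ref{clm:pGreaterThanHalf} applied at $t=1$, each such bit satisfies $\P{\action_w(1)=S} = \half + \half D_{TV}(\mu_0,\mu_1) =: \half + c$ for a constant $c = c(\mu_0,\mu_1) > 0$.

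First I would note that since $u$ observes all the actions $\{\action_w(1) : w \in N(u)\}$, its information $\cF_u(2)$ contains these $|N(u)|$ conditionally independent bits, each correct with probability $\half + c$. Because $\action_u(2)$ is the MAP estimator of $S$ given $\cF_u(2)$, it does at least as well as the simple majority vote over these bits. Hence it suffices to lower bound the probability that a majority of $m := |N(u)|$ i.i.d.-conditioned-on-$S$ bits, each equal to $S$ with probability $\half + c$, correctly identifies $S$.

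Next I would apply a standard Chernoff/Hoeffding bound to this majority vote. Conditioned on $S$, the number of bits that agree with $S$ is a sum of $m$ independent Bernoulli random variables each with mean $\half + c$; the majority is wrong only if fewer than $m/2$ of them agree, which is a deviation of at least $cm$ below the mean. Hoeffding's inequality gives that this probability is at most $e^{-2c^2 m}$. Taking $C_1 = 1$ and $C_2 = 2c^2 = 2c^2(\mu_0,\mu_1)$ (adjusting constants to absorb the minor issue of ties when $m$ is even, and the fact that the bits may not be identically distributed but each is correct with probability \emph{at least} $\half+c$, which only helps) yields
\begin{align*}
  p_u(2) \;\geq\; \P{\text{majority of } \action_w(1) = S} \;\geq\; 1 - C_1 e^{-C_2 |N(u)|},
\end{align*}
as required.

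The main obstacle, though a minor one, is handling the bookkeeping around ties and the non-identical distributions: the neighbors' bits need not all have exactly the same bias, only a common lower bound $\half+c$ on their accuracy, and one must verify that the Chernoff bound is monotone in the bias so that replacing each bit by one with the worst-case bias $\half+c$ is legitimate. A clean way to sidestep this is to couple each actual bit to a bit with bias exactly $\half + c$ that is correct whenever the actual bit is, so the coupled majority is wrong only when the worse majority is; this reduces everything to the i.i.d.\ case and makes the Hoeffding bound directly applicable. Care should also be taken that the MAP estimator $\action_u(2)$ dominates the majority vote, which follows immediately since the majority vote is a function of information available in $\cF_u(2)$ and the MAP estimator maximizes $\P{A=S}$ over all $\cF_u(2)$-measurable $A$ (Remark~\ref{rem:map}).
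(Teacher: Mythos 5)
Your overall strategy (the neighbors' first actions are conditionally i.i.d.\ informative bits, and the MAP estimator $\action_u(2)$ dominates any $\cF_u(2)$-measurable aggregate of them) matches the paper's, which at this step simply invokes ``standard asymptotic statistics of hypothesis testing.'' However, your reduction to a \emph{simple majority vote} has a genuine gap. The accuracy $\P{\action_w(1)=S}=\half+\half D_{TV}(\mu_0,\mu_1)$ from Claim~\ref{clm:pGreaterThanHalf} is an \emph{unconditional} probability, averaged over the two values of $S$; it does not give $\CondP{\action_w(1)=s}{S=s}\geq\half+c$ for each $s$ separately, which is what your Hoeffding step requires. Concretely, take $\Omega=\{a,b\}$ with $\mu_1(a)=0.9$ and $\mu_0(a)=0.8$. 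The MAP action after seeing $a$ is $1$ (posterior $9/17>\half$) and after seeing $b$ is $0$, so $\CondP{\action_w(1)=1}{S=1}=0.9$ while $\CondP{\action_w(1)=0}{S=0}=0.2$; the unconditional accuracy is $0.55>\half$, yet conditioned on $S=0$ the majority of the neighbors' bits equals $1\neq S$ with probability tending to one. So the majority vote is \emph{not} a good estimator here, and the coupling you propose --- to bits of bias exactly $\half+c$ that are correct whenever the actual bit is --- is impossible in this example, since conditioned on $S=0$ the actual bit is correct with probability only $0.2$. This asymmetry is precisely the issue the paper's Lemma~\ref{clm:majority} handles elsewhere via the $\delta_i$'s, reserving the plain majority argument for the symmetric case only.

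The fix is small: replace the majority vote by a likelihood-ratio test on the empirical frequency of $1$'s among the neighbors' first actions. Writing $q_s=\CondP{\action_w(1)=1}{S=s}$, the identity $p_w(1)=\half+\half(q_1-q_0)$ together with Claim~\ref{clm:pGreaterThanHalf} gives $q_1-q_0=D_{TV}(\mu_0,\mu_1)>0$, so the two conditional laws of the bits are distinct. Declare $S=1$ iff the empirical frequency exceeds $(q_0+q_1)/2$; Hoeffding applied separately under each hypothesis gives error at most $e^{-D_{TV}(\mu_0,\mu_1)^2|N(u)|/2}$, and since this test is $\cF_u(2)$-measurable, the MAP estimator $\action_u(2)$ does at least as well. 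With this substitution your argument is correct and becomes a self-contained version of the hypothesis-testing step that the paper leaves as a citation.
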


\begin{proof}
  Conditioned on $S$, private signals are independent and identically
  distributed. Since $\action_w(1)$ is a deterministic function of
  $\psignal_w$, the initial actions $\action_w(1)$ are also
  identically distributed, conditioned on $S$. Hence there exists a
  $q$ such that $p_w(1) = \P{\action_w(t)=S} = q$ for all agents
  $w$. By Lemma~\ref{clm:pGreaterThanHalf} above, $q>1/2$. Therefore
  \begin{align*}
    \CondP{\action_w(1)=1}{S=1} \neq \CondP{\action_w(1)=1}{S=0},
  \end{align*}
  and the distribution of $\action_w(1)$ is different when conditioned
  on $S=0$ or $S=1$.

  Fix an agent $u$, and let $n = |N(u)|$ be the out-degree of $u$, or
  the number of neighbors that it observes. Let
  $\{w_1,\ldots,w_{|N(u)|}\}$ be the set of $u$'s neighbors. Recall
  that $A_u(2)$ is the MAP estimator of $S$ given
  $(A_{w_1}(1),\ldots,A_{w_n}(1))$, and given $u$'s private signal.

  By standard asymptotic statistics of hypothesis testing
  (cf.~\cite{dasgupta2008asymptotic}), testing an hypothesis (in our
  case, say, $S=1$ vs.\ $S=0$) given $n$ informative, conditionally
  i.i.d.\ signals, succeeds except with probability that is
  exponentially low in $n$. It follows that $\P{A_u(2) \neq S}$ is
  exponentially small in $n$, so that there exist $C_1$ and $C_2$ such
  that
  \begin{align*}
    p_u(2) = \P{\action_u(2) = S} \geq 1-C_1 e^{-C_2 \cdot N(u)}.
  \end{align*}
\end{proof}

The following claim is a direct consequence of the previous lemmas of
this section.
\begin{claim}
  \label{thm:large-out-deg-graph}
  Let $d(G) = \sup_u\{N(u)\}$ be the out-degree of the graph $G$; note
  that for infinite graphs it may be that $d=\infty$. Then there exist
  constants $C_1=C_1(\mu_0,\mu_1)$ and $C_2=C_2(\mu_0,\mu_1)$ such
  that
  \begin{align*}
    p(G) \geq 1-C_1 e^{-C_2 \cdot d(G)}
  \end{align*}
  for all agents $u$.
\end{claim}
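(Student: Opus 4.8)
The plan is to read this off almost immediately from the single-agent bound in Lemma~\ref{thm:large-out-deg} together with the two structural claims that precede it. The key observation is that $p(G)$ is a single number shared by all agents, so a good bound for \emph{any one} well-connected agent propagates to the whole graph.

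Concretely, I would first fix an arbitrary agent $u$ and invoke Lemma~\ref{thm:large-out-deg}, which gives $p_u(2) \geq 1 - C_1 e^{-C_2 \cdot N(u)}$ for the constants $C_1, C_2$ supplied there. Next, Claim~\ref{clm:pMonotone} says $p_u(t)$ is nondecreasing in $t$, so its limit satisfies $p_u = \lim_{t\to\infty} p_u(t) \geq p_u(2)$, and therefore
\begin{align*}
  p_u \geq 1 - C_1 e^{-C_2 \cdot N(u)}.
\end{align*}
Finally, Claim~\ref{clm:pG} asserts that $p_u = p(G)$ for every agent $u$, so in fact $p(G) \geq 1 - C_1 e^{-C_2 \cdot N(u)}$ holds \emph{simultaneously} for all $u$, with the same left-hand side.

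It remains to replace the per-vertex degree $N(u)$ by $d(G) = \sup_u N(u)$. Since the right-hand side $1 - C_1 e^{-C_2 \cdot N(u)}$ is increasing in $N(u)$ and the inequality holds for every $u$, I would take the supremum over $u$ of the right-hand side. The only point requiring a word of care — and the closest thing to an obstacle in an otherwise routine argument — is the passage to the supremum, which splits into two cases. When $d(G)$ is finite, the out-degrees are integers, so the supremum is attained by some vertex $u^\ast$ with $N(u^\ast) = d(G)$, and applying the displayed bound at $u^\ast$ yields $p(G) \geq 1 - C_1 e^{-C_2 \cdot d(G)}$ directly. When $d(G) = \infty$ (possible for infinite graphs, as the statement notes), the degrees $N(u)$ are unbounded, so $\sup_u\bigl(1 - C_1 e^{-C_2 \cdot N(u)}\bigr) = 1$, forcing $p(G) = 1$; this agrees with the claimed bound under the natural convention $e^{-C_2 \cdot \infty} = 0$. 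In both cases the desired inequality holds with the same constants $C_1, C_2$, completing the argument.
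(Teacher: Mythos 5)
Your proposal is correct and follows essentially the same route as the paper's own proof: fix an arbitrary agent, apply Lemma~\ref{thm:large-out-deg} at time $2$, use the monotonicity of $p_u(t)$ from Claim~\ref{clm:pMonotone} to pass to the limit, and then use Claim~\ref{clm:pG} to replace $p_u$ with the graph-wide $p(G)$ before taking the supremum over vertices. Your extra care about whether the supremum is attained (and the $d(G)=\infty$ case) is a small refinement the paper glosses over, but it does not change the argument.
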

\begin{proof}
  Let $u$ be an arbitrary vertex in $G$. Then by
  Lemma~\ref{thm:large-out-deg} it holds that
  \begin{align*}
    p_u(2) \geq 1-C_1 e^{-C_2 \cdot N(u)},
  \end{align*}
  for some constants $C_1$ and $C_2$. By Lemma~\ref{clm:pMonotone} we
  have that $p_u(t+1) \geq p_u(t)$, and therefore
  \begin{align*}
    p_u  = \lim_{n \to \infty}p_u(t) \geq 1-C_1 e^{-C_2 \cdot N(u)}.
  \end{align*}
  Finally, $p(G) = p_u$ by Lemma~\ref{clm:pG}, and so
  \begin{align*}
    p_u  \geq 1-C_1 e^{-C_2 \cdot N(u)}.
  \end{align*}
  Since this holds for an arbitrary vertex $u$, the claim follows.
\end{proof}

\subsubsection{Local limits and pessimal graphs}
We now turn to apply local limits to our process. We consider here and
henceforth the same model of Definitions~\ref{def:state-signals}
and~\ref{def:revealed-actions}, as applied, with the same private
signals, to different graphs. We write $p(G)$ for the value of $p$ on
the process on $G$, $\limac(G)$ for the value of $\limac$ on $G$, etc.

\begin{lemma}
  \label{lemma:limit-leq-p}
  Let $(G,u) = \lim_{r \to \infty}(G_r,u_r)$. Then $p(G) \leq \lim
  \inf_r p(G_r)$.
\end{lemma}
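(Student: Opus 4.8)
The plan is to transfer the finite-time success probabilities $p_{u_r}(t)$ on the approximating graphs $G_r$ to the root $u$ on the limit graph $G$, exploiting the fact that isomorphic balls produce identical finite-time behaviour (Corollary~\ref{cor:p-iso}), and then to let $t \to \infty$.

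First I would fix $t \geq 1$. Since $(G,u) = \lim_{r\to\infty}(G_r,u_r)$, the definition of the local limit supplies a rooted isomorphism $B_r(G,u) \cong B_r(G_r,u_r)$ for every $r$. I would then observe that for any $r \geq t$ this isomorphism restricts to a rooted isomorphism $B_t(G,u) \cong B_t(G_r,u_r)$: a vertex lies in $B_t$ precisely when its distance from the root is at most $t$, and these distances are computed the same inside $B_r$ as in the ambient graph whenever $t \leq r$, so the radius-$r$ isomorphism carries $B_t(G,u)$ onto $B_t(G_r,u_r)$. This small restriction argument is the one step that demands a little care.

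Next I would apply Corollary~\ref{cor:p-iso} to this isomorphism, giving that the probability of a correct action at time $t$ agrees on the two processes; writing $p_{u_r}(t)$ for this quantity in the process on $G_r$, we obtain $p_u(t) = p_{u_r}(t)$ for every $r \geq t$. I then invoke the two structural facts already established: by Claim~\ref{clm:pMonotone}, $p_{u_r}(t)$ is nondecreasing in $t$, hence $p_{u_r}(t) \leq \lim_{s\to\infty} p_{u_r}(s) = p_{u_r}$; and by Claim~\ref{clm:pG}, $p_{u_r} = p(G_r)$. Chaining these yields $p_u(t) \leq p(G_r)$ for every $r \geq t$.

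Finally, because the inequality $p_u(t) \leq p(G_r)$ holds for \emph{all} $r \geq t$, taking the $\liminf$ over $r$ gives $p_u(t) \leq \liminf_r p(G_r)$, a bound whose right-hand side no longer depends on $t$. Letting $t \to \infty$ and using $p(G) = \lim_t p_u(t) = p_u$ (Claim~\ref{clm:pG} together with the monotone convergence of $p_u(t)$) produces $p(G) \leq \liminf_r p(G_r)$, as required. The only genuinely delicate point is the interchange of limits in these last two steps: one must secure the finite-$t$ inequality uniformly over all large $r$ before passing to the $\liminf$, rather than attempting to take both limits at once.
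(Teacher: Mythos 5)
Your proof is correct and follows essentially the same route as the paper's: isomorphic balls give $p_u(t)=p_{u_r}(t)$ via Corollary~\ref{cor:p-iso}, monotonicity (Claim~\ref{clm:pMonotone}) and Claim~\ref{clm:pG} bound this by $p(G_r)$, and one passes to the $\liminf$. The only difference is that the paper takes the diagonal $t=r$ directly, whereas you fix $t$ and range over $r\geq t$ before letting $t\to\infty$ --- a slightly more explicit but equivalent bookkeeping.
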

\begin{proof}
  Since $B_r(G_r,u_r) \cong B_r(G,u)$, by Lemma~\ref{cor:p-iso} we
  have that $p_u(r) = p_{u_r}(r)$. By Claim~\ref{clm:pMonotone}
  $p_{u_r}(r) \leq p(G_r)$, and therefore $p_u(r) \leq p(G_r)$. The
  claim follows by taking the $\liminf$ of both sides.
\end{proof}
A particularly interesting case in the one the different $G_r$'s are
all the same graph:
\begin{corollary}
  \label{cor:limit-leq-p}
  Let $G$ be a (perhaps infinite) graph, and let $\{u_r\}$ be a
  sequence of vertices. Then if the local limit $(H,u) = \lim_{r \to
    \infty}(G,u_r)$ exists then $p(H) \leq p(G)$.
\end{corollary}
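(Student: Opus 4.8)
The plan is to obtain this as a direct specialization of Lemma~\ref{lemma:limit-leq-p}, taking the approximating sequence of graphs to be the constant sequence equal to the fixed graph $G$.

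Concretely, I would set $G_r = G$ for every $r$, and keep the roots $u_r$ as given. Then the sequence of rooted graphs $\{(G,u_r)\}_{r=1}^\infty$ is exactly a sequence of the form $\{(G_r,u_r)\}_{r=1}^\infty$ of the kind considered in Lemma~\ref{lemma:limit-leq-p}. By the hypothesis of the corollary, the local limit of this sequence exists and equals $(H,u)$, so all the premises of Lemma~\ref{lemma:limit-leq-p} are met with its ``limit graph'' being $H$.

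Applying Lemma~\ref{lemma:limit-leq-p} to this situation yields
\begin{align*}
  p(H) \leq \liminf_r p(G_r).
\end{align*}
Since $G_r = G$ is the same graph for every $r$, we have $p(G_r) = p(G)$ for all $r$, so the right-hand side collapses and $\liminf_r p(G_r) = p(G)$. Combining the two gives $p(H) \leq p(G)$, as claimed.

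I do not anticipate any genuine obstacle, since this is purely a matter of instantiating the more general lemma with a constant sequence. The only point worth checking carefully is that a constant sequence of graphs with varying roots is a legitimate instance of the local-limit framework, i.e.\ that $(H,u)$ is a local limit in the precise sense of the earlier definition — but this is exactly what the hypothesis of the corollary asserts, so no additional work is required.
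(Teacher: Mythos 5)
Your proposal is correct and matches the paper exactly: the paper presents this corollary as the special case of Lemma~\ref{lemma:limit-leq-p} in which all the $G_r$ are the same graph $G$, so that $\liminf_r p(G_r) = p(G)$. No further comment is needed.
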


Recall that $\InfGraphs_d$ denotes the set of infinite, connected,
undirected graphs of degree at most $d$. Let
\begin{align*}
  \InfGraphs = \bigcup_d \InfGraphs_d.
\end{align*}
\begin{definition}
  Let
  \begin{align*}
    p^* = p^*(\mu_0,\mu_1) = \inf_{G \in \InfGraphs}p(G)
  \end{align*}
  be the probability of learning in the pessimal graph.
\end{definition}
Note that by Claim~\ref{clm:pGreaterThanHalf} we have that $p^*>1/2$.
We show that this infimum is in fact attained by some graph:
\begin{lemma}
  \label{lemma:h-exists}
  There exists a graph $H \in \InfGraphs$ such that $p(H)=p^*$.
\end{lemma}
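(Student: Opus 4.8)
The plan is to take a minimizing sequence of graphs, extract a convergent subsequence in the local topology, and argue that its local limit attains the infimum $p^*$. Concretely, I would choose graphs $G_n \in \InfGraphs$ with $p(G_n) \to p^*$. The two ingredients that make such a limiting argument work are the compactness of bounded-degree rooted graphs (Lemmas~\ref{lemma:inf-graphs-closed} and~\ref{lemma:compactness}) and the semicontinuity statement that if $(G,u) = \lim_r (G_r,u_r)$ then $p(G) \le \liminf_r p(G_r)$ (Lemma~\ref{lemma:limit-leq-p}). The difficulty is that the first ingredient applies only within a fixed degree bound, whereas a priori the degrees of the $G_n$ could tend to infinity; reconciling this is the crux.

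The observation that removes this difficulty is Claim~\ref{thm:large-out-deg-graph}, namely $p(G) \ge 1 - C_1 e^{-C_2 d(G)}$, so that graphs of large maximum degree have $p$ close to $1$ and are therefore far from minimizing. If $p^* = 1$ the statement is immediate, since then $p(G) = 1$ for every $G \in \InfGraphs$ (as $1$ is a lower bound and $p \le 1$), so any $H \in \InfGraphs$ works. Thus I would assume $p^* < 1$ and fix $\eps > 0$ with $p^* + \eps < 1$. For all large $n$ one has $p(G_n) < p^* + \eps$, and combined with Claim~\ref{thm:large-out-deg-graph} this forces $d(G_n) \le D$ for some constant $D = D(\eps,\mu_0,\mu_1)$. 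After discarding finitely many terms, the entire sequence lies in $\InfGraphs_D$.

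Next I would root each $G_n$ at an arbitrary vertex $u_n$; this is harmless because $p(G_n) = p_{u_n}$ for \emph{every} choice of root $u_n$ by Claim~\ref{clm:pG}. The rooted graphs $(G_n,u_n)$ then lie in the compact space $\InfGraphs_D^r$ (Lemma~\ref{lemma:inf-graphs-closed}), so by Lemma~\ref{lemma:compactness} some subsequence converges to a local limit $(H,u)$, and since $\InfGraphs_D^r$ is closed we get $H \in \InfGraphs_D \subseteq \InfGraphs$. Applying Lemma~\ref{lemma:limit-leq-p} along this subsequence yields $p(H) \le \liminf_i p(G_{n_i}) = p^*$, while $p(H) \ge p^*$ holds by the definition of $p^*$ as an infimum over $\InfGraphs$. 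Hence $p(H) = p^*$, as required.

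I expect the main obstacle, and the only genuinely nontrivial point, to be this reduction to bounded degree: without Claim~\ref{thm:large-out-deg-graph} the minimizing sequence need not lie in any single compact class $\InfGraphs_D^r$, and the local-limit machinery would not apply at all. Everything else is bookkeeping: root-independence of $p$ (Claim~\ref{clm:pG}) lets us ignore the choice of roots, and the semicontinuity Lemma~\ref{lemma:limit-leq-p} converts local convergence of the graphs into the desired inequality on $p$.
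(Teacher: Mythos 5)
Your proposal is correct and follows essentially the same route as the paper: reduce to a uniformly bounded-degree minimizing sequence via the exponential bound of Claim~\ref{thm:large-out-deg-graph}, extract a local limit by compactness of $\InfGraphs_d^r$, and conclude with the semicontinuity of Lemma~\ref{lemma:limit-leq-p} together with the definition of $p^*$ as an infimum. Your explicit handling of the $p^*=1$ case is in fact slightly cleaner than the paper's, which defers that case to the main theorem rather than resolving the lemma directly.
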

\begin{proof}
  Let $\{G_r = (V_r,E_r)\}_{r=1}^{\infty}$ be a series of graphs in
  $\InfGraphs$ such that $\lim_{r \to \infty} p(G_r) = p^*$. Note that
  $\{G_r\}$ must all be in $\InfGraphs_d$ for some $d$ (i.e., have
  uniformly bounded degrees), since otherwise the sequence $p(G_r)$
  would have values arbitrarily close to $1$ and its limit could not
  be $p^*$ (unless indeed $p^* = 1$, in which case our main
  Theorem~\ref{thm:bounded-learning} is proved). This follows from
  Lemma~\ref{thm:large-out-deg}.

  We now arbitrarily mark a vertex $u_r$ in each graph, so that $u_r
  \in V_r$, and let $(H,u)$ be the limit of some
  subsequence of $\{G_r,u_r\}_{r=1}^\infty$. Since $\InfGraphs_d$ is
  compact (Lemma~\ref{lemma:inf-graphs-closed}), $(H,u)$ is guaranteed
  to exist, and $H \in \InfGraphs_d$.

  By Lemma~\ref{lemma:limit-leq-p} we have that $p(H) \leq \liminf_r
  p(G_r) = p^*$. But since $H \in \InfGraphs$, $p(H)$ cannot be less
  than $p^*$, and the claim is proved.
\end{proof}

\subsubsection{Independent bits}
We now show that on infinite graphs, the private signals in the
neighborhood of agents that are ``far enough away'' are (conditioned
on $S$) almost independent of $\limac$ (the final consensus estimate
of $S$).

\begin{lemma}
  \label{lemma:r-independent}
  Let $G$ be an infinite graph. Fix a vertex $u_0$ in $G$.  Then for
  every $\delta>0$ there exists an $r_\delta$ such that for every
  $r\geq r_\delta$ and every vertex $u$ with $d(u_0,u)>2r$ it holds
  that $\psignal(B_r(G,u))$, the private signals in $B_r(G,u)$, are
  $\delta$-independent of $\limac$, conditioned on $S$.
\end{lemma}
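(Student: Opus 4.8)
The plan is to approximate the global random variable $\limac$ by one that depends only on the private signals in a \emph{bounded} ball around the fixed vertex $u_0$, and then exploit the conditional independence of disjoint signal-sets. The key observation is that, in the setting of Theorem~\ref{thm:bounded-learning}, all agents share the same limiting optimal action set, so $\limac = \limac_{u_0}$ is determined by the limiting information $\cF_{u_0}$ at $u_0$. By Lemma~\ref{lemma:L-estimate} the finite-time estimators satisfy $\estL_{u_0}(t) \to \limac$ almost surely, hence $\P{\estL_{u_0}(t) \neq \limac} \to 0$ as $t \to \infty$. Moreover $\estL_{u_0}(t)$ is $\cF_{u_0}(t)$-measurable, so by Claim~\ref{clm:a-from_cf} it is a deterministic function of $\psignal(B_t(G,u_0))$, the signals within radius $t$ of $u_0$ (the argmax in~\eqref{eq:l-i-t} uses a fixed tie-breaking rule and involves no extra randomness).

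First I would fix $\delta > 0$ and choose $t = t(\delta)$ large enough that $\P{\estL_{u_0}(t) \neq \limac} \leq \delta/4$; since $\P{S=s}=1/2$, this yields $\CondP{\estL_{u_0}(t) \neq \limac}{S = s} \leq \delta/2$ for each $s \in \{0,1\}$. I then set $r_\delta = t(\delta)$. For any $r \geq r_\delta$ and any vertex $u$ with $d(u_0,u) > 2r$, the triangle inequality shows that every $w \in B_r(G,u)$ satisfies $d(u_0,w) \geq d(u_0,u) - d(u,w) > 2r - r = r \geq t(\delta)$, so the balls $B_{t(\delta)}(G,u_0)$ and $B_r(G,u)$ are disjoint.

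Conditioned on $S$ the private signals are i.i.d., so signals in disjoint balls are conditionally independent. Since $\estL_{u_0}(t(\delta))$ is a function of $\psignal(B_{t(\delta)}(G,u_0))$, it is exactly $0$-independent of $\psignal(B_r(G,u))$ conditioned on $S$. I would then invoke Claim~\ref{clm:delta-independent} under each conditional law given $S$, with $A = \limac$, $B = \estL_{u_0}(t(\delta))$ and $C = \psignal(B_r(G,u))$: since $\CondP{A \neq B}{S=s} \leq \delta/2$ and $(B,C)$ are $0$-independent, we get that $(\limac, \psignal(B_r(G,u)))$ are $2\cdot(\delta/2)+0 = \delta$-independent conditioned on $S$, as required.

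The crux of the argument — and the step to watch — is the interplay between the approximation radius and the disjointness requirement: the radius $t(\delta)$ needed to approximate $\limac$ to accuracy $\delta$ must depend only on $\delta$, and not on $r$ or on the particular far-away vertex $u$. This is precisely what makes the conclusion uniform over all $u$ with $d(u_0,u) > 2r$, and it is guaranteed because $\estL_{u_0}(t(\delta))$ already captures $\limac$ up to probability $\delta/4$ once $t(\delta)$ is fixed, after which every sufficiently distant ball is automatically disjoint from $B_{t(\delta)}(G,u_0)$.
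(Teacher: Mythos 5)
Your proof is correct and follows essentially the same route as the paper's: approximate $\limac$ by the finite-time estimator $\estL_{u_0}$ (the paper uses $\estL_{u_0}(r)$ directly rather than a fixed $\estL_{u_0}(t(\delta))$, but this is immaterial), note that it is a function of the signals in a ball around $u_0$ disjoint from $B_r(G,u)$, and conclude via Claim~\ref{clm:delta-independent}. Your explicit handling of the conditioning on $S$ when converting the unconditional bound $\P{\estL_{u_0}(t)\neq\limac}\leq\delta/4$ into a conditional one is a small point of care that the paper's proof leaves implicit.
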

Here we denote graph distance by $d(\cdot,\cdot)$.
\begin{proof}
  Fix $u_0$, and let $u$ be such that $d(u_0,u) > 2r$. Then
  $B_r(G,u_0)$ and $B_r(G,u)$ are disjoint, and hence independent
  conditioned on $S$. Hence $\estL_{u_0}(r)$ is independent of
  $\psignal(B_r(G,u))$, conditioned on $S$.

  Lemma~\ref{lemma:L-estimate} states that $\P{\lim_{r \to
      \infty}\estL_{u_0}(r) = \limac} = 1$, and so there exists an
  $r_\delta$ such that for every $r \geq r_\delta$ it holds that
  $\P{\estL_{u_0}(r) = \limac} > 1-\half\delta$.

  Recall Claim~\ref{clm:delta-independent}: for any $A,B,C$, if
  $\P{A=B} = 1-\half\delta$ and $B$ is independent of $C$, then
  $(A,C)$ are $\delta$-independent.

  Applying Claim~\ref{clm:delta-independent} to $\limac$,
  $\estL_{u_0}(r)$ and $\psignal(B_r(G,u))$ we get that for any $r$
  greater than $r_\delta$ it holds that $\psignal(B_r(G,u))$ is
  $\delta$-independent of $\limac$, conditioned on $S$.
\end{proof}

We will now show, in the lemmas below, that in infinite graphs each
agent has access to any number of ``good estimators'':
$\delta$-independent measurements of $S$ that are each almost as
likely to equal $S$ as $p^*$, the minimal probability of estimating
$S$ on any infinite graph.
\begin{definition}
  We say that agent $u \in G$ has {\bf $k$ $(\delta,\eps)$-good
    estimators} if there exists a time $t$ and estimators
  $M_1,\ldots,M_k$ such that $(M_1,\ldots,M_k) \in \cF_u(t)$ and
  \begin{enumerate}
  \item $\P{M_i = S} > p^* - \eps$ for $1 \leq i \leq k$.
  \item $(M_1, \ldots, M_k)$ are $\delta$-independent, conditioned on
    $S$.
  \end{enumerate}
\end{definition}

\begin{claim}
  \label{clm:good-est-local}
  Let $P$ denote the property of having $k$ $(\delta,\eps)$-good
  estimators. Then $P$ is a {\em local property}
  (Definition~\ref{def:local-property}) of the rooted graph
  $(G,u)$. Furthermore, if $u \in G$ has $k$ $(\delta,\eps)$-good
  estimators measurable in $\cF_u(t)$ then $(G,u) \in P^{(t)}$, i.e.,
  $(G,u)$ has property $P$ with radius $t$.
\end{claim}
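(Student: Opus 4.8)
The plan is to show that an isomorphism of balls of radius $t$ already suffices to transfer the property, so that the witnessing time $t$ becomes a witnessing radius $t$ in the sense of Definition~\ref{def:local-property}. Concretely, suppose $u \in G$ has $k$ $(\delta,\eps)$-good estimators $M_1,\ldots,M_k$, all measurable in $\cF_u(t)$. I will show that whenever $B_t(G,u) \cong B_t(G',u')$, the root $u'$ of $G'$ also has $k$ $(\delta,\eps)$-good estimators, witnessed again at time $t$. By definition this is exactly the assertion $(G,u) \in P^{(t)}$, and in particular it establishes that $P$ is a local property.

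The key device is Lemma~\ref{lemma:p-iso}, applied not estimator-by-estimator but to the single vector-valued random variable $M = (M_1,\ldots,M_k)$, which is itself measurable in $\cF_u(t)$. Given a rooted isomorphism $h : B_t(G,u) \to B_t(G',u')$, the lemma produces an $M' = (M_1',\ldots,M_k')$ measurable in $\cF_{u'}(t)$ such that $(M,S)$ and $(M',S')$ have the same distribution; that is, the \emph{joint} law of $(M_1,\ldots,M_k,S)$ coincides with that of $(M_1',\ldots,M_k',S')$. This is exactly the single coupling from the proof of Lemma~\ref{lemma:p-iso}: setting $S=S'$ and $\psignal_w = \psignal_{h(w)}$ forces $I_u(t)=I_{u'}(t)$ through Claim~\ref{clm:a-from_cf}, so that each $M_i$ equals its image $M_i'$ on the nose, and the entire tuple is matched simultaneously.

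It then remains to observe that both defining conditions of a good estimator are functionals of the joint law of the $(k+1)$-tuple $(M_1,\ldots,M_k,S)$ alone, and so transfer to the primed system. The accuracy condition is immediate, since $\P{M_i'=S'} = \P{M_i=S} > p^* - \eps$, where $p^*$ is a fixed constant that does not depend on the graph. The independence condition is likewise a functional of the conditional joint law of $(M_1,\ldots,M_k)$ given $S$ --- namely the distance $\dtv$ between that conditional law and the product of its conditional marginals --- and since $\P{S=1}=1/2$ is fixed, this quantity is determined by the joint law of $(M_1,\ldots,M_k,S)$. Equality of the joint laws therefore yields that $(M_1',\ldots,M_k')$ are $\delta$-independent conditioned on $S'$. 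Hence $M_1',\ldots,M_k'$ witness that $u'$ has $k$ $(\delta,\eps)$-good estimators measurable in $\cF_{u'}(t)$.

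I expect the one genuine subtlety --- and thus the step to state carefully --- to be precisely the use of Lemma~\ref{lemma:p-iso} for the full tuple rather than for each $M_i$ in isolation: condition (2) is a statement about the \emph{joint} conditional distribution, and applying the lemma separately to each coordinate would preserve only the marginal laws of $(M_i,S)$ while leaving the dependence structure uncontrolled. Packaging the estimators into a single vector-valued $M$ removes this difficulty at once, and the remaining bookkeeping --- that the time $t$ serves as the radius $r_0$ of Definition~\ref{def:local-property} --- is routine.
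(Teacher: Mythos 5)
Your proof is correct and follows essentially the same route as the paper, which simply applies Lemma~\ref{lemma:p-iso} to the tuple $(M_1,\ldots,M_k)$ viewed as a single $\cF_u(t)$-measurable random variable and notes that both defining conditions of a good estimator depend only on the joint law of $(M_1,\ldots,M_k,S)$. Your explicit remark that the lemma must be applied to the full vector rather than coordinatewise is a useful clarification of a point the paper leaves implicit, but it is the same argument.
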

\begin{proof}
  If $(G,u) \in P$ then by definition there exists a time $t$ such
  that $(M_1,\ldots,M_k) \in \cF_u(t)$. Hence by
  Lemma~\ref{lemma:p-iso}, if $B_t(G,u) \cong B_t(G',u')$ then $u' \in
  G'$ also has $k$ $(\delta,\eps)$-good estimators $(M'_1,\ldots,M'_k)
  \in \cF_{u'}(t)$ and $(G',u') \in P$. In particular, $(G,u) \in
  P^{(t)}$, i.e., $(G,u)$ has property $P$ with radius $t$.
\end{proof}

We are now ready to prove the main lemma of this subsection:
\begin{lemma}
  \label{thm:independent-bits}
  For every $d \geq 2$, $G \in \InfGraphs_d$, $\eps, \delta > 0$ and
  $k \geq 0$ there exists a vertex $u$, such that $u$ has $k$
  $(\delta,\eps)$-good estimators.
\end{lemma}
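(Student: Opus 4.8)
The plan is to produce, for one agent $u$ and one time $t$, a family $M_1,\dots,M_k \in \cF_u(t)$ that is simultaneously accurate and $\delta$-independent conditioned on $S$. The accuracy requirement is the easy half. Since $G \in \InfGraphs$ we have $p(G) \ge p^*$ by definition of $p^*$, so Claim~\ref{clm:pMonotone} together with Claim~\ref{clm:pG} gives that any agent's action $\action_v(t)$ has $\P{\action_v(t)=S} \to p(G) \ge p^*$; hence for $t$ large a single action, or the consensus-based guess built from $\estL_v(t)$ (good by Claim~\ref{claim:b-a-equiv}), already exceeds $p^*-\eps$. The real content is therefore to exhibit $k$ such good estimators that are mutually $\delta$-independent \emph{and} all readable from one agent's information.

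For independence I would lean on disjointness: any estimators that are functions of disjoint balls $B_r(G,v_i)$ are \emph{exactly} conditionally independent given $S$, because by Definition~\ref{def:state-signals} the signals in disjoint balls are. So the first step is to locate, inside $G$, many vertices carrying a good estimator at a common radius. ``Having one good estimator'' is a local property (the $k=1$ instance of Claim~\ref{clm:good-est-local}, and it holds at every vertex by the accuracy remark above), so Lemma~\ref{lemma:local_property} applied with $\GraphFam=\InfGraphs_d$ — whose rooted class is compact by Lemma~\ref{lemma:inf-graphs-closed} — yields infinitely many vertices $v_1,v_2,\dots$, which I thin to be pairwise more than $2r_0$ apart, each with a good estimator measurable in $\cF_{v_i}(r_0)\subseteq\sigma(\psignal(B_{r_0}(G,v_i)))$ by Claim~\ref{clm:a-from_cf}. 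The balls being disjoint, these estimators are jointly conditionally independent and each good; the $\delta$-independence bookkeeping, when combining approximate versions, is supplied by Claims~\ref{clm:delta-independent}, \ref{clm:function-independent} and \ref{clm:delta-ind-additive}.

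The remaining, and main, obstacle is \emph{consolidation}: the estimators just built live in the information of distinct agents $v_i$, whereas the definition demands all $M_i\in\cF_u(t)$ for a single $u$, and a bounded-degree agent observes only the thresholded actions of its neighbors, so it cannot simply read off a far region's private estimate. The device I expect to bridge this is Lemma~\ref{lemma:r-independent}: in an infinite graph the signals in any bounded ball are $\delta$-independent of the global consensus $\limac$, since one local region has vanishing influence on the consensus reached by the rest of the graph. This immediately lets a single agent pair its own local estimators with its running estimate $\estL_u(t)$ of $\limac$ (good by Claim~\ref{claim:b-a-equiv}, and $\to\limac$ by Lemma~\ref{lemma:L-estimate}), which is $\delta$-independent of those local estimators. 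The delicate point — where I expect the real work to lie — is iterating to \emph{arbitrarily many} estimators without their all collapsing onto the single variable $\limac$: the successive $M_i$ must be drawn from genuinely disjoint regions of $G$ (so they are independent of one another, not merely of $\limac$) while a single agent still recovers a $\delta$-accurate proxy for each, with every $\delta$ controlled by choosing the separating radii beyond the threshold $r_\delta$ of Lemma~\ref{lemma:r-independent}. Reconciling ``disjoint region $\Rightarrow$ mutual independence'' with ``one agent can read it'' is the crux, and I would attack it by an induction on $k$ driven by Claim~\ref{clm:good-est-local} and Lemma~\ref{lemma:local_property}, adding one region at a time and absorbing the extra dependence into the budget $\delta$.
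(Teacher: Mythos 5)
You have assembled the right toolkit --- induction on $k$, Claim~\ref{clm:good-est-local} plus Lemma~\ref{lemma:local_property} and compactness to produce infinitely many far-away vertices carrying the $k$-estimator package, Lemma~\ref{lemma:r-independent} for near-independence of local data from $\limac$, and the consensus estimate as the accurate new bit --- but the step you yourself flag as the crux is left unresolved, and the mechanism you commit to for it is not the one that works. You propose to get mutual independence of the $M_i$ by drawing them from pairwise disjoint balls $B_{r_0}(G,v_i)$ and then ``consolidating'' them at one agent $u$; as you note, a bounded-degree agent observing only thresholded actions cannot recover a far region's estimator, so this route is blocked exactly where you say it is.

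The paper's induction avoids consolidation entirely: the inductive hypothesis already places all $k$ estimators in $\cF_w(t_k)$ for a \emph{single} vertex $w$, and Lemma~\ref{lemma:local_property} is used only to relocate that single vertex far from a fixed reference vertex $u_0$. The $k$ inherited estimators are then arbitrary functions of $\psignal(B_{t_k}(G,w))$; their mutual $\delta/100$-independence is a purely distributional fact transported by the rooted-ball isomorphism (Lemma~\ref{lemma:p-iso}), \emph{not} a consequence of disjoint supports. The one new estimator is $w$'s own $C_w(t_{k+1})$, which is within $\delta/100$ in probability of $\limac$, and $\limac$ is within $\delta/100$ of $\estL_{u_0}(r)$, a function of signals near $u_0$ and hence exactly independent of everything in $B_{r^*}(G,w)$. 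Chaining Claims~\ref{clm:delta-independent}, \ref{clm:function-independent} and \ref{clm:delta-ind-additive} then gives $\delta$-independence of $(M_1,\ldots,M_k,C_w(t_{k+1}))$, all measurable in $\cF_w(t_{k+1})$. This also dissolves your worry about ``collapsing onto $\limac$'': each estimator was a consensus proxy only at the finite time and in the graph where it was born; once transplanted near $w$ it is just a local function of signals far from $u_0$, and Lemma~\ref{lemma:r-independent} makes \emph{any} such function nearly independent of the current graph's $\limac$. Without this relocation-plus-single-new-bit structure, your sketch does not yet constitute a proof.
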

Informally, this lemma states that if $G$ is an infinite graph with
bounded degrees, then there exists an agent that eventually has $k$
almost-independent estimates of $S$ with quality close to $p^*$, the
minimal probability of learning.

\begin{proof}
  In this proof we use the term ``independent'' to mean ``independent
  conditioned on $S$''.

  We choose an arbitrary $d$ and prove by induction on $k$. The basis
  $k = 0$ is trivial.  Assume the claim holds for $k$, any $G \in
  \InfGraphs_d$ and all $\eps,\delta>0$. We shall show that it holds
  for $k+1$, any $G \in \InfGraphs_d$ and any $\delta,\eps>0$.

  By the inductive hypothesis for every $G \in \InfGraphs_d$ there
  exists a vertex in $G$ that has $k$ $(\delta/100,\eps)$-good
  estimators $(M_1, \ldots, M_k)$.

  Now, having $k$ $(\delta/100,\eps)$-good estimators is a {\em local
    property} (Claim~\ref{clm:good-est-local}).  We now therefore
  apply Lemma~\ref{lemma:local_property}: since every graph $G \in
  \InfGraphs_d$ has a vertex with $k$ $(\delta/100,\eps)$-good
  estimators, any graph $G \in \InfGraphs_d$ has a time $t_k$ for
  which infinitely many distinct vertices $\{w_r\}$ have $k$
  $(\delta/100,\eps)$-good estimators measurable at time $t_k$.

  In particular, if we fix an arbitrary $u_0 \in G$ then for every $r$
  there exists a vertex $w \in G$ that has $k$
  $(\delta/100,\eps)$-good estimators and whose distance $d(u_0,w)$
  from $u_0$ is larger than $r$.

  We shall prove the lemma by showing that for a vertex $w$ that is
  far enough from $u_0$ which has $(\delta/100,\eps)$-good estimators
  $(M_1,\ldots,M_k)$, it holds that for a time $t_{k+1}$ large enough
  $(M_1,\ldots,M_k,C_w(t_{k+1}))$ are $(\delta,\eps)$-good estimators.

  By Lemma~\ref{lemma:r-independent} there exists an $r_\delta$ such
  that if $r>r_\delta$ and $d(u_0,w) > 2r$ then $\psignal(B_r(G,w))$
  is $\delta/100$-independent of $\limac$.  Let $r^* =
  \max\{r_\delta,t_k\}$, where $t_k$ is such that there are infinitely
  many vertices in $G$ with $k$ good estimators measurable at time
  $t_k$.

  Let $w$ be a vertex with $k$ $(\delta/100,\eps)$-good estimators
  $(M_1,\ldots,M_k)$ at time $t_k$, such that $d(u_0,w) > 2r^*$.
  Denote
  \begin{align*}
    \bar{M}=(M_1, \ldots, M_k).
  \end{align*}
  Since $d(u_0,w) > 2r_\delta$, $\psignal(B_{r^*}(G,w))$ is
  $\delta/100$-independent of $\limac$, and since $B_{t_k}(G,w)
  \subseteq B_{r^*}(G,w)$, $\psignal(B_{t_k}(G,w))$ is
  $\delta/100$-independent of $\limac$. Finally, since $\bar{M} \in
  \cF_w(t_k)$, $\bar{M}$ is a function of $\psignal(B_{t_k}(G,w))$,
  and so by Claim~\ref{clm:function-independent} we have that
  $\bar{M}$ is also $\delta/100$-independent of $\limac$.

  For $t_{k+1}$ large enough it holds that
  \begin{itemize}
  \item  $\estL_w(t_{k+1})$ is equal to $L$ with probability at least
    $1-\delta/100$, since
    \begin{align*}
      \lim_{t \to \infty} \P{\estL_w(t)=\limac} = 1,
    \end{align*}
     by Claim~\ref{lemma:L-estimate}.
  \item Additionally, $\P{C_w(t_{k+1})=S} > p^*-\eps$, since
    \begin{align*}
      \lim_{t \to \infty}\P{C_w(t)=S} = p \geq p^*,
    \end{align*}
    by Claim~\ref{claim:b-a-equiv}.
  \end{itemize}

  We have then that $(\bar{M},\limac)$ are $\delta/100$-independent and
  $\P{\estL_w(t_{k+1}) \neq \limac} \leq \delta/100$.
  Claim~\ref{clm:delta-independent} states that if $(A,B)$ are
  $\delta$-independent $\P{B \neq C} \leq \delta'$ then $(A,C)$ are
  $\delta+2\delta'$-independent. Applying this here we get that
  $(\bar{M},\estL_w(t_{k+1}))$ are $\delta/25$-independent.

  It follows by application of Claim~\ref{clm:delta-ind-additive} that
  $(M_1,\ldots,M_k,\estL_w(t_{k+1}))$ are $\delta$-independent.  Since
  $C_w(t_{k+1})$ is a function of $\estL_w(t_{k+1})$ and an
  independent bit, it follows by another application of
  Claim~\ref{clm:function-independent} that $(M_1, \ldots, M_k,
  C_w(t_{k+1}))$ are also $\delta$-independent.

  Finally, since $\P{C_w(t_{k+1})=S} > p^*-\eps$, $w$ has the $k+1$
  $(\delta,\eps)$-good estimators $(M_1,\ldots,C_w(t_{k+1}))$ and the
  proof is concluded.

\end{proof}
\subsubsection{Asymptotic learning}
As a tool in the analysis of finite graphs, we would like to prove
that in infinite graphs the agents learn the correct state of the
world almost surely.

\begin{theorem}
  \label{thm:bounded}
  Let $G=(V,E)$ be an infinite, connected undirected graph with
  bounded degrees (i.e., $G$ is a general graph in $\InfGraphs$). Then
  $p(G)=1$.
\end{theorem}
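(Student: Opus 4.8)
The plan is to prove this by contradiction, exploiting the \emph{pessimal graph} together with the ``boosting'' supplied by many conditionally almost-independent good estimators. Suppose toward a contradiction that $p^* < 1$. Recall that $p^* > 1/2$ by Claim~\ref{clm:pGreaterThanHalf}, so $1/2 < p^* < 1$. By Lemma~\ref{lemma:h-exists} there is a graph $H \in \InfGraphs_d$ (for some finite $d$, necessarily $d \geq 2$ since $H$ is infinite and connected) attaining the infimum, i.e.\ $p(H) = p^*$.

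Next I would feed $p = p^*$ into the majority lemma (Lemma~\ref{cor:majority}): since $1/2 < p^* < 1$, it yields thresholds $\delta = \delta(p^*) > 0$ and $\eta = \eta(p^*) > 0$ with the property that any three binary variables that are $\delta$-independent conditioned on $S$ and each equal to $S$ with probability in $(p^* - \eta, 1)$ admit a MAP aggregate equal to $S$ with probability strictly greater than $p^*$. I would then invoke Theorem~\ref{thm:independent-bits} on this $G = H$, with $k = 3$, with $\eps = \eta$, and with $\delta = \delta(p^*)$: it guarantees a vertex $u \in H$ possessing three $(\delta,\eta)$-good estimators $M_1,M_2,M_3$, measurable in $\cF_u(t)$ for some finite time $t$, which are $\delta$-independent conditioned on $S$ and satisfy $\P{M_i = S} > p^* - \eta$ for each $i$.

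The contradiction is then immediate. If some $M_i$ satisfies $\P{M_i = S} = 1$ then that estimator alone already beats $p^*$; otherwise every $\P{M_i = S}$ lies in $(p^* - \eta, 1)$ and Lemma~\ref{cor:majority} applies, so the MAP estimator $a = a(M_1,M_2,M_3)$ satisfies $\P{a = S} > p^*$. In either case there is an estimator of $S$, measurable in $\cF_u(t)$, beating $p^*$. Since $p_u(t) = \max_{A \in \cF_u(t)} \P{A = S}$ is the probability of the \emph{optimal} $\cF_u(t)$-measurable estimator, this gives $p_u(t) > p^*$, and hence
\begin{align*}
  p(H) = p_u = \lim_{s \to \infty} p_u(s) \geq p_u(t) > p^*,
\end{align*}
contradicting $p(H) = p^*$. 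Therefore $p^* = 1$, and since $p(G) \geq p^* = 1$ for every $G \in \InfGraphs$ while $p(G) \leq 1$ trivially, we conclude $p(G) = 1$, as desired.

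The conceptual heart has already been discharged by the supporting results: Theorem~\ref{thm:independent-bits} is where the genuine work lies, producing arbitrarily many conditionally almost-independent measurements of $S$ in any bounded-degree infinite graph. Given that machinery, the present argument is essentially a one-line ``boosting'' paradox—optimal learning cannot be strictly worse than perfect, because three nearly-independent good guesses can always be combined to do strictly better, contradicting the minimality of $p^*$. The only points demanding care are matching the quantifiers (choosing $\eps$ and $\delta$ in the good-estimator guarantee to be exactly the $\eta(p^*)$ and $\delta(p^*)$ handed back by the majority lemma) and disposing of the degenerate case of an already-perfect estimator.
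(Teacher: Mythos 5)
Your proof is correct and follows essentially the same route as the paper: assume $p^* < 1$, take the pessimal graph $H$ from Lemma~\ref{lemma:h-exists}, extract three conditionally $\delta$-independent good estimators via Theorem~\ref{thm:independent-bits}, and use Lemma~\ref{cor:majority} to boost past $p^*$ for a contradiction. Your explicit handling of the degenerate case $\P{M_i = S} = 1$ (needed since the majority lemma requires $\P{X_i=S}<1$) is a small point the paper elides, but the argument is otherwise identical.
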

Note that an alternative phrasing of this theorem is that $p^*=1$.
\begin{proof}
  Assume the contrary, i.e. $p^*<1$. Let $H$ be an infinite, connected
  graph with bounded degrees such that $p(H) = p^*$, such as we've
  shown exists in Lemma~\ref{lemma:h-exists}.

  By Lemma~\ref{thm:independent-bits} there exists for arbitrarily
  small $\eps,\delta >0 $ a vertex $w \in H$ that has access at some
  time $T$ to three $\delta$-independent estimators (conditioned on
  $S$), each of which is equal to $S$ with probability at least
  $p^*-\eps$. By Claims~\ref{cor:majority}
  and~\ref{clm:pGreaterThanHalf}, the MAP estimator of $S$ using these
  estimators equals $S$ with probability higher than $p^*$, for the
  appropriate choice of low enough $\eps,\delta$. Therefore, since
  $w$'s action $\action_w(T)$ is the MAP estimator of $S$, its
  probability of equaling $S$ is $\P{\action_w(T)=S} > p^*$ as well,
  and so $p(H) > p^*$ - contradiction.
\end{proof}

Using Theorem~\ref{thm:bounded} we prove
Theorem~\ref{thm:bounded-learning}, which is the corresponding
theorem for finite graphs:
\begin{theorem*}[\ref{thm:bounded-learning}]
  Let $\mu_0,\mu_1$ be such that for every connected, undirected graph
  $G$ there exists a random variable $\limac$ such that almost surely
  $\limac_u=\limac$ for all $u \in V$.  Then there exists a sequence
  $q(n)=q(n, \mu_0, \mu_1)$ such that $q(n) \to 1$ as $n \to \infty$,
  and $\P{\limac = \{S\}} \geq q(n)$, for any choice of undirected,
  connected graph $G$ with $n$ agents.
\end{theorem*}
\begin{proof}
  Assume the contrary. Then there exists a series of graphs $\{G_r\}$
  with $r$ agents such that $\lim_{r \to \infty}\P{\limac(G_r)=\{S\}} < 1$,
  and so also $\lim_{r \to \infty}p(G_r) < 1$.

  By the same argument of Theorem~\ref{thm:bounded} these graphs must
  all be in $\InfGraphs_d$ for some $d$, since otherwise, by
  Lemma~\ref{thm:large-out-deg-graph}, there would exist a subsequence
  of graphs $\{G_{r_d}\}$ with degree at least $d$ and $\lim_{d \to
    \infty}p(G_{r_d}) = 1$. Since $\InfGraphs_d$ is compact
  (Lemma~\ref{lemma:inf-graphs-closed}), there exists a graph $(G,u)
  \in \InfGraphs_d$ that is the limit of a subsequence of $\{(G_r,
  u_r)\}_{r=1}^\infty$.

  Since $G$ is infinite and of bounded degree, it follows by
  Theorem~\ref{thm:bounded} that $p(G) = 1$, and in particular
  $\lim_{r \to \infty}p_u(r) = 1$. As before, $p_{u_r}(r) = p_u(r)$,
  and therefore $\lim_{r \to \infty}p_{u_r}(r) = 1$. Since $p(G_r)
  \geq p_{u_r}(r)$, $\lim_{r \to \infty}p(G_r) = 1$, which is a
  contradiction.
\end{proof}

\subsection{Convergence to identical optimal action sets}
\label{sec:convergence}
In this section we prove Theorem~\ref{thm:unbounded-common-knowledge}.
\begin{theorem*}[\ref{thm:unbounded-common-knowledge}]
  Let $(\mu_0,\mu_1)$ induce non-atomic beliefs. Then there exists
  a random variable $\limac$ such that almost surely $\limac_u=\limac$
  for all $u$.
\end{theorem*}

In this section we shall assume henceforth that the distribution of
initial private beliefs is non-atomic.

\subsubsection{Previous work}
The following theorem is due to Gale and Kariv~\cite{GaleKariv:03}.
Given two agents $u$ and $w$, let $E_u^0$ denote the event that
$\action_u(t)$ equals $0$ infinitely often $E_w^1$ and the event that
$\action_w(t)$ equals $1$ infinitely often.
\begin{theorem}[Gale and Kariv]
  \label{thm:imitation}
  If agent $u$ observes agent $w$'s actions then
  \begin{equation*}
    \P{E_u^0, E_w^1} = \P{\belief_u = 1/2, E_u^0, E_w^1}.
  \end{equation*}
\end{theorem}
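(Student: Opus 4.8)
The plan is to prove the equivalent statement $\P{E_u^0, E_w^1, \belief_u \neq 1/2} = 0$. First I would record two elementary consequences of the action rule and martingale convergence: since $\action_u(t)=0$ forces $\belief_u(t)\le 1/2$, the event $E_u^0$ implies $\belief_u=\lim_t\belief_u(t)\le 1/2$; conversely $\{\belief_u<1/2\}$ implies $\action_u(t)=0$ for all large $t$, hence implies $E_u^0$. Combining these, $\{E_u^0,E_w^1,\belief_u\neq 1/2\}=\{\belief_u<1/2,\,E_w^1\}$, so it suffices to show $\P{\belief_u<1/2,\,E_w^1}=0$: up to a null set it is impossible for $u$'s limiting belief to lie strictly below $1/2$ while the observed neighbor $w$ plays action $1$ infinitely often.

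The engine is martingale convergence together with the fact that the information $u$ extracts from $w$'s latest action vanishes in the limit. Since $u$ observes $w$, the variable $\action_w(t)$ is $\cF_u(t+1)$-measurable, so I would insert the intermediate posterior $\tilde\belief_u(t)=\CondP{S=1}{\cF_u(t)\vee\sigma(\action_w(t))}$ between $\belief_u(t)$ and $\belief_u(t+1)$. Because $\belief_u(t)$ is a bounded martingale with orthogonal, square-summable increments, refining the filtration by this one step keeps the increments square-summable, and in particular $\sum_t\E{(\tilde\belief_u(t)-\belief_u(t))^2}<\infty$; hence $\tilde\belief_u(t)-\belief_u(t)\to 0$ almost surely. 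On the event $E_w^1$ I would pass to the random subsequence of times $t_k$ at which $\action_w(t_k)=1$. Along this subsequence $\belief_u(t_k)\to\belief_u$ and therefore $\tilde\belief_u(t_k)\to\belief_u$ as well, while the odds form of Bayes' rule gives, on $\{\action_w(t)=1\}$, the relation $\tilde\belief_u(t)/(1-\tilde\belief_u(t))=(\alpha_t/\beta_t)\,\belief_u(t)/(1-\belief_u(t))$, where $\alpha_t=\CondP{\action_w(t)=1}{S=1,\cF_u(t)}$ and $\beta_t=\CondP{\action_w(t)=1}{S=0,\cF_u(t)}$. When $\belief_u\in(0,1/2)$ the convergence $\tilde\belief_u(t_k)-\belief_u(t_k)\to 0$ forces the likelihood ratio $\alpha_{t_k}/\beta_{t_k}\to 1$ (the degenerate case $\belief_u=0$ being handled separately and easily, since there the posterior is already extremal).

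The crux, and where I expect the real difficulty, is to show that observing $w$ play $1$ is genuine evidence for $S=1$ — that is, $\alpha_t\ge\beta_t$, with the inequality bounded away from equality exactly on the bad event where $u$ is confident of $0$ while $w$ keeps choosing $1$ — which would contradict $\alpha_{t_k}/\beta_{t_k}\to 1$ and close the argument. The sign $\alpha_t\ge\beta_t$ is the conditional association statement $\Cov{\mathbf 1_{S=1}}{\action_w(t)}\ge 0$ given $\cF_u(t)$. Since $\action_w(t)=\mathbf 1[\belief_w(t)\ge 1/2]$ is a monotone function of $w$'s posterior $\belief_w(t)=\CondE{\mathbf 1_{S=1}}{\cF_w(t)}$, this correlation is immediate, by the tower property and Chebyshev's association inequality, for any observer whose information is contained in $\cF_w(t)$. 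The obstacle is that $\cF_u(t)$ carries side information — agent $u$'s own private signal and the actions of its other neighbors — that is not contained in $\cF_w(t)$, and conditioning on such side information can in principle reverse the correlation. Overcoming this, by exploiting that conditioned on $S$ all private signals are independent so that the only channel of dependence between $u$'s side information and $w$'s action runs through $S$ itself (making the relevant conditional law a monotone garbling of $S$), is the heart of the matter; it is precisely the content of the agreement theorem of Gale and Kariv that we are invoking here.
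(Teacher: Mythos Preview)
The paper does not prove this theorem. It is stated in the subsection ``Previous work'' as a result due to Gale and Kariv, with a citation and no argument; the paper uses it as a black box to derive Corollary~\ref{cor:imitation} and, downstream, Corollary~\ref{cor:converge}. There is therefore no proof in the paper to compare your proposal against.

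On the substance of your sketch: the reductions in the first paragraph are correct, and the martingale step --- inserting $\tilde\belief_u(t)$ and using square-summability of increments to force $\tilde\belief_u(t)-\belief_u(t)\to 0$ --- is the standard engine for agreement-type results. The genuine gap is the one you yourself flag: you need not merely the sign $\alpha_t\ge\beta_t$ but that the ratio $\alpha_{t_k}/\beta_{t_k}$ stays bounded away from $1$ on the event $\{\belief_u<1/2\}\cap E_w^1$, and you do not prove this. Your closing sentence, that this ``is precisely the content of the agreement theorem of Gale and Kariv that we are invoking here,'' is circular as written, since that theorem is exactly what you set out to prove. The monotone-garbling heuristic (all dependence between $u$'s side information and $\action_w(t)$ runs through $S$) is the right intuition, but turning it into a quantitative lower bound on $\alpha_t-\beta_t$, uniform over $\{\belief_u(t)\le 1/2-\epsilon\}$, is the real work and is not supplied here.
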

I.e., if agent $u$ takes action 0 infinitely often, agent $w$ takes
action 1 infinitely, and $u$ observes $w$ then $u$'s belief is $1/2$
at the limit, almost surely.
\begin{corollary}
  \label{cor:imitation}
  If agent $u$ observes agent $w$'s actions, and $w$ takes both
  actions infinitely often then $X_u=1/2$.
\end{corollary}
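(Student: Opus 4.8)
The plan is to read Theorem~\ref{thm:imitation} as an almost-sure statement and then to apply it twice: once to the process as given, and once to its $0\leftrightarrow 1$ relabelling. First I would observe that the identity $\P{E_u^0, E_w^1} = \P{\belief_u = 1/2, E_u^0, E_w^1}$ is equivalent to saying that the event $E_u^0 \cap E_w^1 \cap \{\belief_u \neq 1/2\}$ has probability zero; that is, $\belief_u = 1/2$ almost surely on $E_u^0 \cap E_w^1$. This already handles the part of the corollary's hypothesis in which $u$ itself takes action $0$ infinitely often.

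To cover the complementary case I would exploit the symmetry of the model under interchanging the two states. Let $E_u^1$ (resp.\ $E_w^0$) denote the event that $\action_u(t)=1$ (resp.\ $\action_w(t)=0$) infinitely often. Consider the \emph{dual} process obtained by leaving the private signals $\psignal_u$ untouched but swapping the signal distributions $\mu_0 \leftrightarrow \mu_1$; since $\P{S=0}=\P{S=1}=1/2$, this is again a bona fide instance of Definitions~\ref{def:state-signals} and~\ref{def:revealed-actions}, now estimating $\tilde S = 1-S$. In the dual process each agent's MAP action is $1-\action_u(t)$ and its limiting belief is $1-\belief_u$, so the event $\{\belief_u=1/2\}$ is preserved while $E_u^0, E_w^1$ become $E_u^1, E_w^0$. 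Applying Theorem~\ref{thm:imitation} to the dual process therefore yields $\belief_u = 1/2$ almost surely on $E_u^1 \cap E_w^0$.

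Finally I would assemble the two cases. The hypothesis that $w$ takes both actions infinitely often is exactly the event $E_w^0 \cap E_w^1$. Because $u$ acts in every period and its actions are binary, at least one of $E_u^0$, $E_u^1$ holds, giving the inclusion $E_w^0 \cap E_w^1 \subseteq (E_u^0 \cap E_w^1) \cup (E_u^1 \cap E_w^0)$. On the first piece the original theorem gives $\belief_u=1/2$ a.s., and on the second piece the dual statement gives $\belief_u=1/2$ a.s.; hence $X_u=\belief_u=1/2$ almost surely on $E_w^0 \cap E_w^1$, which is the claim. The only delicate point is justifying the symmetry step — one must check that relabelling the states really produces a valid instance of the model and that it carries the belief event and the two action events across as asserted — but given the uniform prior on $S$ this is a routine verification rather than a genuine obstacle.
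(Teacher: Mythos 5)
Your proof is correct and follows essentially the same route as the paper's: split according to which action $u$ takes infinitely often, apply Theorem~\ref{thm:imitation} in one case and its $0\leftrightarrow1$ mirror image in the other. The paper phrases this as a contradiction argument on $X_u<1/2$ versus $X_u>1/2$ and dismisses the second case with ``treated similarly''; your explicit dual-process construction (swapping $\mu_0\leftrightarrow\mu_1$) just makes that symmetry step rigorous, which is a welcome but not substantively different addition.
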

\begin{proof}
  Assume by contradiction that $X_u < 1/2$. Then $u$ takes
  action 0 infinitely often. Therefore Theorem~\ref{thm:imitation}
  implies that $X_u=1/2$ - contradiction.

  The case where $X_u > 1/2$ is treated similarly.
\end{proof}

\subsubsection{Limit log-likelihood ratios}
Denote
\begin{align*}
  Y_u(t) =
  \log\frac{\CondP{I_u(t)}{S=1,\bar{\action}_u(t)}}{\CondP{I_u(t)}{S=0,\bar{\action}_u(t)}}.
\end{align*}
In the next claim we show that $\llr_u(t)$, the log-likelihood ratio
inspired by $u$'s observations up to time $t$, can be written as the
sum of two terms: $\llr_u(1)=\frac{d\mu_1}{d\mu_0}(\psignal_u)$, which
is the log-likelihood ratio inspired by $u$'s private signal
$\psignal_u$, and $Y_u(t)$, which depends only on the actions of $u$
and its neighbors, and does not depend directly on $\psignal_u$.

\begin{claim}
  \label{clm:llr-decomp}
  \begin{align*}
    \llr_u(t) = \llr_u(1) + Y_u(t).
  \end{align*}
\end{claim}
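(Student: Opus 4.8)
The plan is to convert the claim into a statement about likelihood ratios and then exploit the structural fact that the private signal $\psignal_u$ enters the rest of the network only through $u$'s own revealed actions. First I would pass from posterior odds to likelihoods: since $\P{S=1}=\P{S=0}=\tfrac12$ and $\cF_u(t)=\sigma(\psignal_u,I_u(t))$, Bayes' rule gives $\llr_u(t)=\log\frac{\CondP{\psignal_u,I_u(t)}{S=1}}{\CondP{\psignal_u,I_u(t)}{S=0}}$, the two equal priors cancelling. Factoring the joint likelihood of the data $(\psignal_u,I_u(t))$ by the chain rule,
\[
  \CondP{\psignal_u,I_u(t)}{S=s}=\CondP{\psignal_u}{S=s}\cdot\CondP{I_u(t)}{\psignal_u,S=s},
\]
the first factor contributes exactly $\log\frac{d\mu_1}{d\mu_0}(\psignal_u)=\llr_u(1)$. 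It therefore remains to identify the contribution of the second factor, $\log\frac{\CondP{I_u(t)}{\psignal_u,S=1}}{\CondP{I_u(t)}{\psignal_u,S=0}}$, with $Y_u(t)$.

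The heart of the argument is to show that this conditional likelihood ratio depends on $\psignal_u$ only through $u$'s own action history $\bar{\action}_u(t)$. I would establish, by an induction on $t$ parallel to Claim~\ref{clm:a-from_cf}, that $I_u(t)$ is a deterministic function of $\psignal(V\setminus\{u\})$, the signals of all agents other than $u$, together with $\bar{\action}_u(t)$: every neighbor observes $u$ only through $u$'s actions, and recursively each other agent's actions depend on $\psignal_u$ only through the actions $u$ has already revealed. Since, conditioned on $S$, the signal $\psignal_u$ is independent of $\psignal(V\setminus\{u\})$, it follows that once $\bar{\action}_u(t)$ is fixed the conditional law of $I_u(t)$ carries no further dependence on $\psignal_u$. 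Hence $\CondP{I_u(t)}{\psignal_u,S=s}$ is a function of $\bar{\action}_u(t)$ and $I_u(t)$ alone, so its log-ratio is a deterministic function of the actions of $u$ and its neighbors — namely the quantity $Y_u(t)$ — and substituting back completes the identity.

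The main obstacle is the second paragraph, not the routine Bayes computation of the first. The delicate point is that $\bar{\action}_u(t)$ is itself a function of $(\psignal_u,I_u(t-1))$, so it is entangled both with the signal we condition on and with the actions whose likelihood we evaluate; the argument must treat $u$'s action path as the \emph{sole} conduit of $\psignal_u$ into the network and verify that the feed-forward structure — neighbors see actions, never signals — makes $\CondP{I_u(t)}{\psignal_u,S=s}$ insensitive to $\psignal_u$ once $\bar{\action}_u(t)$ is held fixed, and that the resulting function of the actions is exactly the conditional expression defining $Y_u(t)$. Matching the conditioning correctly is the subtle step, and it is the same ``the private signal acts independently of the far-away network'' phenomenon that underlies the $\delta$-independence arguments used elsewhere in the paper.
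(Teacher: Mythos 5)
Your proposal is correct and follows essentially the same route as the paper: factor the posterior odds via Bayes' rule into the private-signal likelihood ratio $\llr_u(1)$ plus the conditional likelihood ratio of $I_u(t)$, then argue that conditioned on $S$ the neighbors' actions depend on $\psignal_u$ only through $u$'s own revealed action history, so that $\CondP{I_u(t)}{S=s,\psignal_u}=\CondP{I_u(t)}{S=s,\bar{\action}_u(t)}$ and the second term is exactly $Y_u(t)$. Your second paragraph, decomposing $I_u(t)$ as a deterministic function of $\psignal(V\setminus\{u\})$ and $\bar{\action}_u(t)$ and invoking conditional independence of the signals, is in fact a slightly more explicit rendering of the step the paper justifies by citing Claim~\ref{clm:a-from_cf}.
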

\begin{proof}
  By definition we have that
  \begin{align*}
    \llr_u(t) =
    \log\frac{\CondP{S=1}{\cF_u(t)}}{\CondP{S=0}{\cF_u(t)}}
    = \log\frac{\CondP{S=1}{I_u(t),\psignal_u}}{\CondP{S=0}{I_u(t),\psignal_u}}.
  \end{align*}
  and by the law of conditional probabilities
  \begin{align*}
    \llr_u(t) &= \log\frac{\CondP{I_u(t)}{S=1,
            W_u}\CondP{W_u}{S=1}}{\CondP{I_u(t)}{S=0,W_u}\CondP{W_u}{S=0}}\\
      &=\log\frac{\CondP{I_u(t)}{S=1,
          W_u}}{\CondP{I_u(t)}{S=0,W_u}}+\llr_u(1).
  \end{align*}
  Now $I_u(t)$, the actions of the neighbors of $u$ up to time $t$,
  are a deterministic function of $\psignal(B_t(G,u))$, the private signals
  in the ball of radius $t$ around $u$, by
  Claim~\ref{clm:a-from_cf}. Conditioned on $S$ these are all
  independent, and so, from the definition of actions, these actions
  depend on $u$'s private signal $\psignal_u$ only in as much as it
  affects the actions of $u$. Hence
  \begin{align*}
    \CondP{I_u(t)}{S=s,W_u} = \CondP{I_u(t)}{S=s,\bar{\action}_u(t)},
  \end{align*}
  and therefore
  \begin{align*}
    \llr_u(t) &=\log\frac{\CondP{I_u(t)}{S=1,
        \bar{\action}_u(t)}}{\CondP{I_u(t)}{S=0,\bar{\action}_u(t)}}+\llr_u(1)\\
    &= \llr_u(1)+Y_u(t).
  \end{align*}
\end{proof}
Note that $Y_u(t)$ is a deterministic function of $I_u(t)$ and
$\bar{\action}_u(t)$.

Following our notation convention, we define $Y_u = \lim_{t \to
  \infty} Y_u(t)$. Note that this limit exists almost surely since the
limit of $\llr_u(t)$ exists almost surely. The following claim follows
directly from the definitions:
\begin{claim}
  \label{clm:y-measurable}
  $Y_u$ is measurable in $(\bar{\action}_u,I_u)$, the actions of $u$
  and its neighbors.
\end{claim}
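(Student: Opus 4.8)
The plan is to reduce the claim to two elementary facts: that each finite-time quantity $Y_u(t)$ is measurable in $\sigma(\bar{\action}_u, I_u)$, and that a pointwise limit of such functions is again measurable in the same $\sigma$-algebra. No substantive probabilistic input is needed beyond the decomposition and convergence already established; this is really a bookkeeping statement about which $\sigma$-algebra a limiting random variable lives in.

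First I would record that, by the remark stated immediately after Claim~\ref{clm:llr-decomp}, $Y_u(t)$ is a deterministic function of the pair $(I_u(t), \bar{\action}_u(t))$. Next I would observe that both arguments of this function are themselves measurable with respect to $\sigma(\bar{\action}_u, I_u)$: the truncated action sequence $\bar{\action}_u(t) = (\action_u(1), \ldots, \action_u(t-1))$ is a coordinate projection of the full sequence $\bar{\action}_u = (\action_u(1), \action_u(2), \ldots)$, and likewise $I_u(t) = \{\action_w(t') : w \in N(u), t' < t\}$ is obtained from $I_u = \{\action_w(t') : w \in N(u), t' \geq 1\}$ simply by discarding the coordinates with $t' \geq t$. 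Composing a deterministic function with $\sigma(\bar{\action}_u, I_u)$-measurable inputs, I conclude that each $Y_u(t)$ is $\sigma(\bar{\action}_u, I_u)$-measurable.

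Finally I would invoke that $Y_u = \lim_{t \to \infty} Y_u(t)$ exists almost surely, which holds because $\llr_u(t)$ converges almost surely and $Y_u(t) = \llr_u(t) - \llr_u(1)$ by Claim~\ref{clm:llr-decomp}. Since a pointwise almost-sure limit of measurable functions is measurable, the claim follows. The one point deserving any care is the null event on which $\lim_t Y_u(t)$ might fail to exist; this is handled by defining $Y_u := \limsup_t Y_u(t)$, which is always well-defined and $\sigma(\bar{\action}_u, I_u)$-measurable and agrees with the limit on the full-measure event where the latter exists. There is no genuine obstacle here: everything reduces to closure of measurability under composition and under pointwise limits, together with the observation that truncated action histories are projections of the full histories.
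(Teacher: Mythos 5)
Your proof is correct and matches the paper's intent: the paper states this claim ``follows directly from the definitions,'' relying on exactly the observation you spell out, namely that each $Y_u(t)$ is a deterministic function of $(I_u(t),\bar{\action}_u(t))$ and that $Y_u$ is the almost-sure limit of these. You have simply filled in the routine measurability bookkeeping (projections plus closure under pointwise limits) that the authors omit.
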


\subsubsection{Convergence of actions}
The event that an agent takes both actions infinitely often is (almost
surely) a sufficient condition for convergence to belief $1/2$. This
follows from the fact that these actions imply that its belief takes
values both above and below $1/2$ infinitely many times. We show that
it is also (almost surely) a necessary condition. Denote by $E_u^a$
the event that $u$ takes action $a$ infinitely often.
\begin{theorem}
  \label{thm:mu-half-indecisive}
  \begin{equation*}
    \P{E_u^0 \cap E_u^1, \belief_u = 1/2} = \P{\belief_u = 1/2}.
  \end{equation*}
\end{theorem}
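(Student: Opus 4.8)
The plan is to prove the equivalent statement $\P{\belief_u = 1/2,\ \neg(E_u^0 \cap E_u^1)} = 0$: writing $A = E_u^0 \cap E_u^1$ and $B = \{\belief_u = 1/2\}$, the asserted identity $\P{A \cap B} = \P{B}$ is exactly $\P{B \setminus A} = 0$. The complement $\neg A$ is the event that $u$ plays one of the two actions only finitely often, i.e.\ that $u$ is eventually constant, and by the symmetry between $0$ and $1$ it suffices to treat the event that $u$ plays $0$ from some time on. I would partition this event by $u$'s finite action prefix: it is then enough to fix $n$ and a prefix $a = (a_1,\dots,a_{n-1}) \in \{0,1\}^{n-1}$ and to show $\P{D_{n,a},\ \belief_u = 1/2} = 0$, where $D_{n,a}$ is the event that $u$'s entire action sequence equals $a^\star := (a_1,\dots,a_{n-1},0,0,\dots)$. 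The full claim follows by a countable union over $(n,a)$ and the symmetric case.

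The key structural input is a screening observation: conditioned on $u$'s own action sequence, the neighbours' actions $I_u$ do not depend on $u$'s private signal. Concretely I would prove by induction on $t$ that for every $v \neq u$ the action $\action_v(t)$ is a deterministic function of $\bar{\action}_u$ and of $\{\psignal_x : x \neq u\}$. At $t=1$ this holds since $\action_v(1)$ is a function of $\psignal_v$; for the step, $\action_v(t+1)$ is a function of $\psignal_v$ together with $\{\action_y(t') : y \in N(v),\ t' \le t\}$, each term of which is either a coordinate of $\bar{\action}_u$ (if $y = u$) or, by the inductive hypothesis, already a function of $\bar{\action}_u$ and $\{\psignal_x : x \neq u\}$. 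Since the graph is simple, every $w \in N(u)$ satisfies $w \neq u$, so $I_u = \Psi(\bar{\action}_u,\{\psignal_x : x \neq u\})$ for a deterministic $\Psi$. This refines Claim~\ref{clm:a-from_cf}, which only gives measurability with respect to all signals in the ball, including $\psignal_u$.

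Now I would condition on $\mathcal{G} := \sigma(S,\{\psignal_x : x \neq u\})$. On $D_{n,a}$ we have $\bar{\action}_u = a^\star$, so by the screening identity $I_u = \Psi(a^\star,\{\psignal_x : x \neq u\})$ is $\mathcal{G}$-measurable, and hence by Claim~\ref{clm:y-measurable} the variable $Y_u$ agrees on $D_{n,a}$ with a $\mathcal{G}$-measurable random variable $\tilde y$. By Claim~\ref{clm:llr-decomp}, passing to the limit gives $\llr_u = \llr_u(1) + Y_u$, so $\belief_u = 1/2$ is equivalent to $\llr_u(1) = -Y_u$. Therefore $D_{n,a} \cap \{\belief_u = 1/2\} \subseteq \{\llr_u(1) = -\tilde y\}$. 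Conditioned on $\mathcal{G}$, the value $-\tilde y$ is a constant, while $\psignal_u \sim \mu_S$ and so $\llr_u(1) = \log\frac{d\mu_1}{d\mu_0}(\psignal_u)$ has the law of the initial log-likelihood ratio under $\mu_S$, which is non-atomic because $(\mu_0,\mu_1)$ induce non-atomic beliefs. Hence $\CondP{\llr_u(1) = -\tilde y}{\mathcal{G}} = 0$ almost surely, and integrating over $\mathcal{G}$ yields $\P{D_{n,a},\ \belief_u = 1/2} = 0$.

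The main obstacle, and the step deserving the most care, is the screening argument together with the fact that $Y_u$ becomes $\mathcal{G}$-measurable \emph{only on} $D_{n,a}$. The naive hope that $\llr_u(1)$ stays conditionally non-atomic given all observed actions is false — this is precisely why $\belief_u = 1/2$ can occur with positive probability in general, since repeated switching lets the neighbours' actions track and, in the limit, pin down $\llr_u(1)$. What rescues the argument is that forcing $u$'s action sequence to be eventually constant freezes the only channel through which $\psignal_u$ influences the rest of the network, so that conditionally on $\mathcal{G}$ the obstruction $Y_u$ is a constant and cannot coincide with the non-atomic variable $\llr_u(1)$ except on a null set. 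I would also verify the elementary point that non-atomicity of the unconditional initial belief forces non-atomicity of $\llr_u(1)$ under each of $\mu_0$ and $\mu_1$, which uses their mutual absolute continuity.
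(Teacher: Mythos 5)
Your proposal is correct and follows essentially the same route as the paper: decompose $\neg(E_u^0\cap E_u^1)$ over the countably many eventually-constant action sequences, use the decomposition $\llr_u=\llr_u(1)+Y_u$ with $Y_u$ determined by $\bar{\action}_u$ and $\psignal_{-u}$, condition on $S$ and $\psignal_{-u}$, and invoke non-atomicity of $\llr_u(1)$ to kill each term. Your explicit inductive ``screening'' lemma is a worthwhile elaboration of a step the paper leaves implicit inside the proof of Claim~\ref{clm:llr-decomp}, but the argument is the same.
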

I.e., it a.s.\ holds that $X_u=1/2$ iff $u$ takes both actions
infinitely often.
\begin{proof}
  We'll prove the claim by showing that $\P{\neg(E_u^0 \cap E_u^1),
    \belief_u = 1/2} = 0$, or equivalently that $\P{\neg(E_u^0 \cap
    E_u^1), \llr_u = 0} = 0$ (recall that $\llr_u = \log
  \belief_u/(1-\belief_u)$ and so $\belief_u=1/2 \Leftrightarrow
  \llr_u=0$).

  Let $\bar{a}=(a(1), a(2), \ldots)$ be a sequence of actions, and
  denote by $\psignal_{-u}$ the private signals of all agents except
  $u$. Conditioning on $\psignal_{-u}$ and $S$ we can write:
  \begin{align*}
    \P{\bar{\action}_u=\bar{a}, \llr_u=0} &=
    \E{\CondP{\bar{\action}_u=\bar{a},
        \llr_u=0}{\psignal_{-u},S}} \\
    &= \E{\CondP{\bar{\action}_u=\bar{a},
        \llr_u(1)=-Y_u}{\psignal_{-u},S}}
  \end{align*}
  where the second equality follows from Claim~\ref{clm:llr-decomp}.
  Note that by Claim~\ref{clm:y-measurable} $Y_u$ is fully determined
  by $\bar{\action}_u$ and $\psignal_{-u}$. We can therefore write
  \begin{align*}
    \P{\bar{\action}_u=\bar{a}, \llr_u=0} &=
    \E{\CondP{\bar{\action}_u=\bar{a},
        \llr_u(1)=-Y_u(\psignal_{-u},\bar{a})}{\psignal_{-u},S}} \\
    &\leq
    \E{\CondP{\llr_u(1)=-Y_u(\psignal_{-u},\bar{a})}{\psignal_{-u},S}}
  \end{align*}

  Now, conditioned on $S$, the private signal $\psignal_u$ is
  distributed $\mu_S$ and is independent of $\psignal_{-u}$. Hence its
  distribution when further conditioned on $\psignal_{-u}$ is still
  $\mu_S$. Since $\llr_u(1) = \log\frac{d\mu_1}{d\mu_0}(\psignal_u)$,
  its distribution is also unaffected, and in particular is still
  non-atomic. It therefore equals $-Y_u(\psignal_{-u},\bar{a})$ with
  probability zero, and so
  \begin{align*}
    \P{\bar{\action}_u=\bar{a}, \llr_u=0} = 0.
  \end{align*}
  Since this holds for all sequences of actions $\bar{a}$, it holds in
  particular for all sequences which converge. Since there are only
  countably many such sequences, the probability that the action
  converges (i.e., $\neg(E_u^0 \cap E_u^1)$) and $\llr_u=0$ is zero,
  or
  \begin{align*}
    \P{\neg(E_u^0 \cap E_u^1), \llr_u = 0} = 0.
  \end{align*}
\end{proof}

Hence it impossible for an agent's belief to converge to $1/2$ and for
the agent to only take one action infinitely often. A direct
consequence of this, together with Thm.~\ref{thm:imitation}, is the
following corollary:
\begin{corollary}
  \label{cor:converge}
  The union of the following three events occurs with probability one:
  \begin{enumerate}
  \item $\forall u\in V:\lim_{t \to \infty} \action_u(t)=S$. Equivalently, all
    agents converge to the correct action.
  \item $\forall u\in V:\lim_{t \to \infty} \action_u(t)=1-S$. Equivalently, all
    agents converge to the wrong action.
  \item $\forall u\in V:\belief_u=1/2$, and in this case all agents take
    both actions infinitely often and hence don't converge at all.
  \end{enumerate}
\end{corollary}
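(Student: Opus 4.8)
The plan is to reduce the global claim to edge-by-edge statements and then globalize them via the strong connectedness of $G$; the two substantive inputs are the per-agent dichotomy of Theorem~\ref{thm:mu-half-indecisive} and the imitation statement of Theorem~\ref{thm:imitation} (equivalently Corollary~\ref{cor:imitation}), and everything else is a path argument plus bookkeeping. I would first record the dichotomy: by Theorem~\ref{thm:mu-half-indecisive}, for each fixed $u$ almost surely either $\belief_u = 1/2$ and $u$ takes both actions infinitely often, or $\belief_u \neq 1/2$ and $\belief_u(t) \to \belief_u$ forces $\action_u(t)$ to converge to the single value $a_u = \ind{\belief_u > 1/2}$. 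As $V$ is countable this holds simultaneously for all $u$ off one null set; call $u$ \emph{indecisive} when $\belief_u = 1/2$ and \emph{decisive} otherwise.

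Next I would propagate indecision along observation edges. Corollary~\ref{cor:imitation} gives that if $u$ observes $w$ and $w$ is indecisive then $u$ is indecisive; since there are only countably many ordered neighbor pairs, this implication holds for all of them off a single null set. Along any directed path $v_0 \to \cdots \to v_k$ each $v_i$ observes $v_{i+1}$, so indecision of the endpoint $v_k$ forces indecision of $v_0$, and strong connectedness supplies such a path between any ordered pair. Hence almost surely either every agent is indecisive---which is precisely event~3---or every agent is decisive.

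It then remains to handle the all-decisive case and show the limiting actions coincide. For an observation edge $u \to w$, a disagreement $a_u = 0$, $a_w = 1$ forces $E_u^0 \cap E_w^1$ to occur with $\belief_u \neq 1/2$, whereas Theorem~\ref{thm:imitation} asserts $\P{E_u^0, E_w^1, \belief_u \neq 1/2} = 0$; the reverse disagreement $a_u = 1$, $a_w = 0$ is killed by the relabeled form of the same theorem. So off a null set adjacent decisive agents agree, and by strong connectedness all decisive agents share one common limit $a \in \{0,1\}$, which is event~1 if $a = S$ and event~2 if $a = 1-S$. Combining the two cases, the union of the three events has probability one.

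The only routine work is the countability bookkeeping that upgrades the per-edge null sets to a single global one, and keeping the propagation directed along rather than against the observation edges. The one point deserving care---and the place I would be most careful---is the appeal to the $0 \leftrightarrow 1$ symmetric version of Theorem~\ref{thm:imitation}; this is legitimate because the entire model is invariant under simultaneously exchanging the two states, the two actions, and $\mu_0$ with $\mu_1$, exactly as already used in Corollary~\ref{cor:imitation}.
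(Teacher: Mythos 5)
Your proof is correct and follows essentially the same route as the paper's: the per-agent dichotomy from Theorem~\ref{thm:mu-half-indecisive}, propagation of indecision to observers via Corollary~\ref{cor:imitation}, and elimination of disagreement between decisive neighbors via Theorem~\ref{thm:imitation}, all globalized along directed paths using strong connectedness. Your explicit null-set bookkeeping and your appeal to the $0\leftrightarrow 1$-symmetric form of Theorem~\ref{thm:imitation} merely spell out details the paper leaves implicit.
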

\begin{proof}
  Consider first the case that there exists a vertex $u$ such that $u$
  takes both actions infinitely often. Let $w$ be a vertex that
  observes $u$. Then by Corollary~\ref{cor:imitation} we have that
  $X_w=1/2$, and by Theorem~\ref{thm:mu-half-indecisive} $w$ also
  takes both actions infinitely often. Continuing by induction and
  using the fact that the graph is strongly connected we obtain the
  third case that none of the agents converge and $X_u=1/2$ for all
  $u$.

  It remains to consider the case that all agents' actions converge to
  either 0 or 1. Using strong connectivity, to prove the theorem it
  suffices to show that it cannot be the case that $w$ observes $u$
  and they converge to different actions. In this case, by
  Corollary~\ref{cor:imitation} we have that $X_w=1/2$, and then by
  Theorem~\ref{thm:mu-half-indecisive} agent $w$'s actions do not
  converge - contradiction.
\end{proof}

Theorem~\ref{thm:unbounded-common-knowledge} is an easy consequence of
this theorem.  Recall that $\limac_u = \{1\}$ when $\belief_u > 1/2$,
$\limac_u = \{0\}$ when $\belief_u < 1/2$ and $\limac_u = \{0,1\}$
when $\belief_u=1/2$.
\begin{theorem*}[\ref{thm:unbounded-common-knowledge}]
  Let $(\mu_0,\mu_1)$ induce non-atomic beliefs. Then there exists
  a random variable $\limac$ such that almost surely $\limac_u=\limac$
  for all $u$.
\end{theorem*}
\begin{proof}
  Fix an agent $v$. When $\belief_v < 1/2$ (resp.\ $\belief_v > 1/2$)
  then the first (resp.\ second) case of corollary~\ref{cor:converge}
  occurs and $\limac=\{0\}$ (resp.\ $\limac=\{1\}$). Likewise when
  $\belief_v=1/2$ then the third case occurs, $\belief_u = 1/2$ for
  all $u \in V$ and $\limac_u = \{0,1\}$ for all $u \in V$.
\end{proof}

\subsection{Extension to $L$-locally connected graphs}
\label{sec:locally-connected}
The main result of this article, Theorem~\ref{thm:bounded-learning},
is a statement about undirected graphs. We can extend the proof to a
larger family of graphs, namely, $L$-locally connected graphs.
\begin{definition}
  Let $G=(V,E)$ be a directed graph. $G$ is $L$-locally strongly
  connected if, for each $(u,w) \in E$, there exists a path in $G$ of
  length at most $L$ from $w$ to $u$.
\end{definition}

Theorem~\ref{thm:bounded-learning} can be extended as follows.
\begin{theorem}
  \label{thm:l-locally}
  Fix $L$, a positive integer. Let $\mu_0,\mu_1$ be such that for
  every strongly connected, directed graph $G$ there exists a random
  variable $\limac$ such that almost surely $\limac_u=\limac$ for all
  $u \in V$.  Then there exists a sequence $q(n)=q(n, \mu_0, \mu_1)$
  such that $q(n) \to 1$ as $n \to \infty$, and $\P{\limac = \{S\}}
  \geq q(n)$, for any choice of $L$-locally strongly connected graph
  $G$ with $n$ agents.
\end{theorem}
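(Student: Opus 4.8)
The plan is to show that the entire argument of Section~\ref{sec:learning} transfers essentially verbatim once the family $\InfGraphs_d$ of infinite undirected bounded-degree graphs is replaced by the family $\InfGraphs_d^L$ of infinite, $L$-locally strongly connected graphs of out-degree at most $d$. Undirectedness is used in that argument only through three ingredients: (a) the compactness of the relevant family of rooted graphs (Lemma~\ref{lemma:inf-graphs-closed}); (b) the disjointness of balls around far-apart vertices, used inside Lemma~\ref{lemma:local_property} and inside Lemma~\ref{lemma:r-independent}; and (c) the equality $p_u=p_w$ for neighbours $u,w$ (Claim~\ref{clm:pG}). I would re-establish these three ingredients for $\InfGraphs_d^L$ and leave everything else (the $\delta$-independence calculus, the good-estimator induction of Lemma~\ref{thm:independent-bits}, the majority Lemma~\ref{cor:majority}, and the pessimal-graph Lemma~\ref{lemma:h-exists}) untouched, since those depend on the graph family only through (a)--(c).

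First I would prove the analogue of Lemma~\ref{lemma:inf-graphs-closed}: $\InfGraphs_d^L$ is compact. Since $L$-local strong connectivity implies strong connectivity for weakly connected graphs, $\InfGraphs_d^L\subseteq\cG_d$, so it suffices to show it is closed, i.e.\ that its complement in $\cG_d$ is open. As in Lemma~\ref{lemma:inf-graphs-closed}, ``finite'' is a local property. The new point is that ``not $L$-locally strongly connected'' is also local: if $(G,u)$ has an edge $(a,b)$ admitting no directed path of length at most $L$ from $b$ back to $a$, then, as $G\in\cG_d$ is strongly connected, $a$ and $b$ lie at some finite distance $D$ from $u$, and the ball $B_{D+L}(G,u)$ already exhibits the edge $(a,b)$ together with the entire out-neighbourhood of $b$ up to radius $L$; hence the absence of the return path is witnessed by $B_{D+L}(G,u)$. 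Both properties being open, their union is open and $\InfGraphs_d^L$ is closed, hence compact.

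The crucial new geometric fact, which I expect to be the main obstacle, is a bound on \emph{in}-balls, since in directed graphs the out-balls of far-apart vertices need not be disjoint. Writing $B_r^{\mathrm{in}}(G,v)=\{x:d(x,v)\leq r\}$, I claim that in any $L$-locally strongly connected graph $B_r^{\mathrm{in}}(G,v)\subseteq B_{rL}(G,v)$: given a directed path $x=x_0\to\cdots\to x_k=v$ with $k\leq r$, replacing each edge $x_i\to x_{i+1}$ by its length-$\leq L$ return path $x_{i+1}\to x_i$ produces a directed path $v\to x$ of length at most $rL$. Consequently, when the out-degree is at most $d$, every in-ball is finite. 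This single fact repairs the two places where the undirected proof silently used ball disjointness. In the proof of Lemma~\ref{lemma:local_property}, the sequence $\{v_r\}$ with pairwise disjoint out-balls is built greedily: having fixed a finite set $F=\bigcup_{i<n}B_{r_i}(G,v_i)$, the vertices $v$ with $B_{r_n}(G,v)\cap F\neq\emptyset$ all lie in $\bigcup_{x\in F}B_{r_n}^{\mathrm{in}}(G,x)$, a finite set, so an infinite graph always supplies a fresh $v_n$. In Lemma~\ref{lemma:r-independent} I would replace the hypothesis $d(u_0,u)>2r$ by the requirement that the out-balls $B_r(G,u_0)$ and $B_r(G,u)$ be vertex-disjoint; the same finiteness argument shows infinitely many such $u$ exist, and disjointness is exactly what the conditional-independence argument (via Claim~\ref{clm:a-from_cf}) requires, so Lemma~\ref{lemma:r-independent}, and hence the good-estimator induction of Lemma~\ref{thm:independent-bits}, goes through unchanged.

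It remains to re-prove Claim~\ref{clm:pG}, whose original proof used reciprocal edges. Monotonicity along directed edges survives: if $u$ observes $w$ then $\action_w(t)$ is measurable in $\cF_u(t+1)$, so $p_u\geq p_w$. For the reverse inequality I would use the return path guaranteed by $L$-local strong connectivity: a directed path $w\to u$ yields a chain of such inequalities giving $p_w\geq p_u$, hence $p_u=p_w$ for adjacent $u,w$ and, by strong connectivity, $p$ is constant. With these ingredients in place, the proof of Theorem~\ref{thm:bounded} (that $p(G)=1$ for every infinite $G\in\InfGraphs_d^L$, equivalently $p^*=1$ over this family) is identical, using only Lemma~\ref{lemma:h-exists}, Lemma~\ref{thm:independent-bits} and Lemma~\ref{cor:majority}. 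Finally I would run the contradiction argument of Theorem~\ref{thm:bounded-learning}: a sequence of finite $L$-locally strongly connected graphs $G_r$ with $r$ agents and $\lim_r p(G_r)<1$ must have uniformly bounded out-degree (else Claim~\ref{thm:large-out-deg-graph} forces $p\to 1$); by compactness a rooted subsequence converges to some $(G,u)$, which is infinite (a finite limit of size $m$ would force $B_r(G_r,u_r)\cong B_r(G,u)=G$ to have $m$ vertices while the strongly connected $G_r$ has $r>m$) and lies in $\InfGraphs_d^L$ (limits preserve $L$-local strong connectivity, as in the compactness argument above); then Theorem~\ref{thm:bounded} gives $p(G)=1$, so $p_{u_r}(r)=p_u(r)\to 1$ by Corollary~\ref{cor:p-iso}, contradicting $p(G_r)\geq p_{u_r}(r)$.
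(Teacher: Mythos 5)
Your proposal follows the same route as the paper's (quite terse) argument for Theorem~\ref{thm:l-locally}: reduce to the undirected proof by establishing compactness of the family of infinite, $L$-locally strongly connected, bounded-out-degree rooted graphs and by controlling in-balls via the length-$\leq L$ return paths --- your containment $B_r^{\mathrm{in}}(G,v)\subseteq B_{rL}(G,v)$ is exactly the iterated form of the paper's observation that the tail of a directed edge lies in the directed $L$-ball around its head, which the paper uses to bound in-degrees by $L\cdot d^L$ so that Lemma~\ref{thm:large-out-deg-graph} can be applied. Your version is correct and in fact supplies details the paper leaves implicit (the patches to the disjoint-ball constructions inside Lemmas~\ref{lemma:local_property} and~\ref{lemma:r-independent}, and the one-directional chain argument replacing the reciprocal-edge step in Claim~\ref{clm:pG}); the one caveat is that routing compactness through closedness in $\cG_d$ inherits the paper's reliance on Lemma~\ref{lemma:compactness} for directed graphs, which is delicate because strong connectivity need not survive local limits (directed cycles of growing length converge to a one-way ray), but this is harmless here since a direct diagonalization over ball types, combined with your observation that the $L$-return-path property is locally witnessed and hence passes to the limit, gives compactness of the $L$-locally strongly connected family without detouring through $\cG_d$.
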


The proof of Theorem~\ref{thm:l-locally} is essentially identical to
the proof of Theorem~\ref{thm:bounded-learning}. The latter is a
consequence of Theorem~\ref{thm:bounded}, which shows learning in
bounded degree infinite graphs, and of
Lemma~\ref{thm:large-out-deg-graph}, which implies asymptotic learning
for sequences of graphs with diverging maximal degree.

Note first that the set of $L$-locally strongly connected rooted
graphs with degrees bounded by $d$ is compact. Hence the proof of
Theorem~\ref{thm:bounded} can be used as is in the $L$-locally
strongly connected setup.

In order to apply Lemma~\ref{thm:large-out-deg-graph} in this setup,
we need to show that when in-degrees diverge then so do
out-degrees. For this note that if $(u,v)$ is a directed edge then $u$
is in the (directed) ball of radius $L$ around $v$. Hence, if there
exists a vertex $v$ with in-degree $D$ then in the ball of radius $L$
around it there are at least $D$ vertices. On the other hand, if the
out-degree is bounded by $d$, then the number of vertices in this ball
is at most $L\cdot d^L$. Therefore, $d \to \infty$ as $D \to \infty$.

\appendix

\section{Example of Non-atomic private beliefs leading to
  non-learning}
\label{app:example-atomic}

We sketch an example in which private beliefs are atomic and
asymptotic learning does not occur.

\begin{example}
  \label{example:non-learning}
  Let the graph $G$ be the undirected chain of length $n$, so that
  $V=\{1, \ldots, n\}$ and $(u,v)$ is an edge if $|u-v| = 1$. Let the
  private signals be bits that are each independently equal to $S$
  with probability $2/3$. We choose here the tie breaking rule under
  which agents defer to their original signals\footnote{We conjecture
    that changing the tie-breaking rule does not produce asymptotic
    learning, even for randomized tie-breaking.}.
\end{example}

We leave the following claim as an exercise to the reader.
\begin{claim}
  If an agent $u$ has at least one neighbor with the same private
  signal (i.e., $\psignal_u=\psignal_v$ for $v$ a neighbor of $u$)
  then $u$ will always take the same action $\action_u(t) =
  \psignal_u$.
\end{claim}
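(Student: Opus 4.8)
The plan is to reduce the dynamics on the chain to a transparent combinatorial rule and then exploit a monotonicity (coupling) argument to pass to an extremal signal profile. Write $\psignal=(\psignal_1,\dots,\psignal_n)$ for the signal profile, and recall from Claim~\ref{clm:a-from_cf} that each $\action_w(t)$ is a deterministic function $a_w^t(\psignal)$ of the signals. Since a single bit induces likelihood ratio $2$, a direct computation gives $\belief_w(1) > 1/2 \Leftrightarrow \psignal_w = 1$, so $\action_w(1) = \psignal_w$ for every $w$; thus after the first round every agent learns the exact signals of its neighbors.

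First I would establish a structural lemma describing the information available to each agent: by induction on $t$, the algebra $\cF_w(t)$ is generated by the exact values of the signals in a contiguous interval $I_w(t) \ni w$, the belief log-likelihood ratio is $\llr_w(t) = (\#\{1\text{'s}\} - \#\{0\text{'s}\})\log 2$ over $I_w(t)$, and $\action_w(t)$ is the corresponding majority vote, ties being broken toward $\psignal_w$. The inductive step is a ``frontier rule'': when $w$ observes a neighbor's latest action, that action is a majority over the neighbor's own interval, and $w$ either extends $I_w$ by exactly one vertex at that frontier (when the neighbor's action still depends on the as-yet-unknown boundary signal) or learns nothing new on that side (when the neighbor's vote is already saturated by signals $w$ knows). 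The key special case is \emph{screening}: if $\psignal_u = \psignal_{u+1}$, then from time $2$ on the action of $u+1$ as seen by $u$ is a majority that already contains the two equal votes $\psignal_u,\psignal_{u+1}$, hence is constant in the signals to the right of $u+1$; so $u$'s interval never crosses the pair, and $u+1 \in I_u(t)$ contributes a fixed vote equal to $\psignal_u$ at all times.

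Next I would prove a monotonicity lemma: each $a_w^t$ is nondecreasing in $\psignal$ in the coordinatewise order, the tie-breaking-toward-own-signal convention being compatible with monotonicity. This follows by induction on $t$, the point being that $\cF_w(t{+}1)$ is generated by $\psignal_w$ together with neighbors' earlier actions, all nondecreasing in $\psignal$ by the inductive hypothesis, and that the Bayesian posterior $\CondP{S=1}{\cF_w(t{+}1)}$ is nondecreasing in these observations because raising a signal can only raise the likelihood ratio in favor of $S=1$. Granting this, to prove $\action_u(t)=1$ whenever $\psignal_u=\psignal_{u+1}=1$ it suffices, by monotonicity, to verify it for the pointwise-smallest such profile $\psignal^\ast$, namely $\psignal^\ast_u=\psignal^\ast_{u+1}=1$ and $\psignal^\ast_w=0$ for all other $w$.

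Finally I would analyze this single extremal profile using the frontier rule. On the right, screening freezes $I_u$ at the pair immediately. On the left the frontier advances by at most one $0$ per round: the net count $\#\{1\text{'s}\}-\#\{0\text{'s}\}$ over $I_u(t)$ equals $+1$ at times $1$ and $2$ and drops to $0$ at time $3$, when $I_u=\{u-2,u-1,u,u+1\}$ carries votes $(0,0,1,1)$. The advance then halts, because the left neighbor $u-1$'s interval has become balanced and contained in $I_u$, so $u-1$'s subsequent actions are deterministic functions of signals $u$ already knows and reveal nothing further; hence $I_u$ is frozen with net count $0$, the tie is broken toward $\psignal_u=1$, and $\action_u(t)=1$ for all $t$. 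The cases $\psignal_u=0$ and $v=u-1$ follow by the global sign-flip symmetry $(S,\psignal)\mapsto(1-S,1-\psignal)$ and by reflection of the chain. I expect the main obstacle to be the two structural inputs: establishing the monotonicity lemma rigorously (controlling the Bayesian update as signals increase), and proving that the left frontier genuinely freezes rather than reviving at a later round once a more distant agent extends its own interval. Both are exactly where the deterministic ``interval of known signals'' picture must be argued with care rather than merely simulated.
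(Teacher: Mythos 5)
The paper gives no proof of this claim---it is explicitly left as an exercise---so there is nothing to compare your argument to; it must stand on its own, and as written it does not. Your overall architecture (exact-interval structure, screening, frontier/balance rule, reduction to an extremal profile) identifies the right phenomena, and your computation on the profile $(\dots,0,0,1,1,0,0,\dots)$ is correct: the interval freezes at $\{u-2,\dots,u+1\}$ with count $0$ and the tie-break to $\psignal_u$ saves the day. But the two lemmas carrying all the weight are asserted rather than proved. First, the structural lemma (``$\cF_w(t)$ is generated by the exact values of the signals on a contiguous interval'') is not obviously true: the exact-or-nothing dichotomy for what $\action_{w\pm1}(t)$ reveals holds only when the neighbor's known interval contains at most \emph{one} signal unknown to $w$. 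If $w$'s frontier stalls while the neighbor's keeps advancing, the neighbor's majority is taken over several signals unknown to $w$, and observing it reveals \emph{partial} information (e.g.\ ``at least one of two unknown bits is $1$''), which breaks the interval picture. Ruling this out requires an invariant keeping adjacent agents' frontiers within one step of each other (or showing a stalled frontier never revives), which is precisely the content of the exercise and is not supplied.

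Second, the monotonicity lemma's justification is not valid reasoning: the fact that each observation is a nondecreasing function of the signals does not imply that the joint Bayesian posterior $\CondP{S=1}{\cF_w(t)}$ is nondecreasing in the signal profile. Worse, your own screening mechanism works against monotonicity: raising $\psignal_{u+1}$ from $0$ to $1$ when $\psignal_u=1$ triggers screening and permanently blocks $u$ from ever learning $\psignal_{u+2},\psignal_{u+3},\dots$, which may all equal $1$; so increasing one coordinate can strictly decrease the favorable information $u$ eventually accumulates. Whether the \emph{action} is nonetheless monotone appears to hinge on a quantitative frontier invariant (the one-sided contribution to the log-likelihood count is always between $-2$ and $+1$, with the extremes attained only at frozen screened pairs), and once you have that invariant you can conclude $\llr_u(t)\ge 0$ directly---$+1$ from $\psignal_u$, $+1$ from the screened side, at least $-2$ from the other side---making the monotonicity-plus-extremal-profile detour unnecessary. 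As it stands, the reduction to $\psignal^\ast$ rests on an unproved and far-from-obvious lemma, so the proof is incomplete. (Minor additional points: the boundary agents $1$ and $n$ have a single neighbor and need separate, easier treatment, and the sign-flip symmetry you invoke does require observing that the defer-to-own-signal tie-break is itself flip-equivariant.)
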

Since this happens with probability that is independent of $n$, with
probability bounded away from zero an agent will always take the wrong
action, and so asymptotic learning does not occur. It is also clear
that optimal action sets do not become common knowledge, and these
fact are indeed related.

\bibliographystyle{spmpsci}
\bibliography{all}
\end{document}